\def\qed{$\qquad \Box$}
\newtheorem{thm}{Theorem}[section]
\newtheorem{lem}[thm]{Lemma}
\newtheorem{rem}[thm]{Remark}
\def\para#1{\vskip 0.4\baselineskip\noindent{\bf #1}}
\newenvironment{proof}[1][Proof]{\textbf{#1.} }{\ \rule{0.5em}{0.5em}}
\newcommand{\thmref}[1]{Theorem~{\rm \ref{#1}}}
\newcommand{\lemref}[1]{Lemma~{\rm \ref{#1}}}
\makeatletter \@addtoreset{equation}{section}
\newcommand{\beq}[1]{\begin{equation} \label{#1}}
\newcommand{\eeq}{\end{equation}}
\newcommand{\bed}{\begin{displaymath}}
\newcommand{\eed}{\end{displaymath}}
\newcommand{\bea}{\bed\begin{array}{rl}}
\newcommand{\eea}{\end{array}\eed}
\newcommand{\barray}{\begin{array}{ll}}
\newcommand{\earray}{\end{array}}
\def\ds{\displaystyle}
\newcommand{\od}[1]{{\frac{d}{d #1}}}
\newcommand{\pd}[1]{{\frac{\partial}{\partial #1}}}
\newcommand{\pdd}[2]{{\frac{\partial #2}{\partial #1}}}
\numberwithin{equation}{section}
\newcommand{\gibson}{}
\title{Numerical Methods for Linear Diffusion Equations in the Presence of an Interface}
\author{V.~A.~Bokil\footnotemark[1], N.~L.~Gibson\footnotemark[2], S.~L.~Nguyen\footnotemark[3], E.~A.~Thomann\footnotemark[4] and E.~Waymire\footnotemark[5]\\
  Department of Mathematics\\
  Oregon State University\\
  Corvallis, OR 97331-4605
}
\date{}
\begin{document}

\maketitle

\footnotetext[1]{email: bokilv@math.oregonstate.edu}
\footnotetext[2]{email: gibsonn@math.oregonstate.edu}
\footnotetext[3]{email: sonluu.nguyen@upr.edu}
\footnotetext[4]{email: thomann@math.oregonstate.edu}
\footnotetext[5]{email: waymire@math.oregonstate.edu}

\noindent \textbf{Abstract:}
\medskip

\noindent We consider numerical methods for linear parabolic equations in one spatial
dimension having piecewise constant  diffusion coefficients defined by a one parameter family of interface conditions
at the discontinuity.  We construct immersed interface finite element methods for an alternative formulation of the original deterministic diffusion problem in which the interface condition is recast as a natural condition on the interfacial flux for which the given operator is self adjoint.  An Euler-Maruyama method is developed for the stochastic differential equation corresponding to the alternative divergence formulation of the equation having a discontinuous coefficient and a one-parameter family of interface conditions.  We then prove convergence estimates for the Euler scheme.   The main goal is to develop numerical schemes that can accommodate specification of any one of the possible interface conditions, and
to illustrate the implementation for each of the deterministic and stochastic formulations, respectively.  The issues pertaining to speed-ups of the numerical schemes are left to future work.
\medskip

\noindent {\bf Keywords:} Diffusion, divergence form operators, discontinuous coefficients, interface conditions, Immersed Interface methods, stochastic differential equations, Euler-Maruyama method.

\medskip

\noindent {\bf AMS Classification}: (primary): 60H10, 65U05; (Secondary): 65C05, 60J30, 60E07, 65R20

\clearpage

\section{Introduction}

The computational simulation of solutions to diffusion equations
in heterogeneous materials or landscapes
requires the use of highly efficient numerical methods which are
consistent, stable, and potentially have high orders of accuracy.
Discontinuities in parameters may decrease the overall accuracy of the
method if not handled appropriately.  The correct discretization at
the interface depends on the type of interface condition imposed by
the problem.

In diffusion models the transmission properties may be
coupled to physically discrete, discontinuous properties of the environment
such as river networks \cite{ignacio_etal_2010} or landscape topography
and meteorological conditions \cite{reiczigel_etal_2010,seno_koshiba_2005,lutscher_etal_2005,lutscher_etal_2006,Mayer_etal_2008}.
Diffusion equations provide one of the standard approaches to modeling population dynamics with dispersal in spatially patchy environments \cite{skellam1951random,cantrell2003spatial}.
There are a number of empirical studies which indicate that the dispersal behavior of individuals, such as species of insects including aphids, beetles and caterpillar, foraging honey bees as well as several species of butterflies, is influenced by boundaries (interfaces) between different types of habitats (patches) \cite{Schultz2001,turchin1989aggregation,ries2001butterfly,aizen2003bees}.

In recent work on interfacial effects \cite{Appuhamilage_AAP,Appuhamilage_WRR, waymire2011first}, the authors analyze the underlying stochastic process determined by the equation in divergence form and having a specific interfacial condition in the presence of discontinuities in diffusion coefficients across interfaces. The theory of Brownian motion applies to diffusion models in homogeneous media with constant coefficients \cite{einstein1956investigations}.  However, the discontinuity in the diffusion tensor at the interface between two media `skews' the basic particle motion. Incorporating bias in behavior/movement at an interface or patch boundary into diffusion models naturally leads to {\it Skew Brownian Motion (SBM)} \cite{ito1963brownian,walsh1,harrison1981skew,cantrell1999diffusion}, from which the underlying stochastic particle motions across the discontinuity, called $\alpha$-skew diffusion, can be constructed \cite{Appuhamilage_AAP}.
SBM assumes that particles (individuals) move according to ordinary diffusion until they encounter an interface, but at an interface the probability that a particle (individual) will move into the region on one side of the interface is different than the chance that it will move into the region on the other side; see \cite{cantrell1999diffusion} for the case of conservative interface conditions.

The basic idea to be
developed in the present article to deal with more general specifications of interface
conditions (than the conservative case) can be used in either deterministic or stochastic numerical framework.  Thus we have elected to present numerical approaches to both the deterministic and the stochastic equations in this single article.  Readers may be selective in this regard since the deterministic and stochastic methods are treated independently up to sharing common notation where possible.  The key idea that used for both approaches is a change of variables that transforms the given problem into one that involves a natural continuity of flux interface condition, rendering the problem self-adjoint, i.e., a form of {\it symmetrization}.

For the numerical simulation of diffusion equations with discontinuous coefficients involving special interface conditions we will develop immersed interface methods for the spatial discretization that have been recently formulated for elliptic problems \cite{leveque1994immersed,li2006immersed,li2003overview,li1998immersed,wiegmann1998immersed,mittal2005immersed, zhou2006high,zhao2004high}. For the time discretization we use implicit finite difference schemes such as Backward Euler and the Crank-Nicolson method. Here we present error estimates for the semi-discrete (continuous in time) problem with immersed finite element for the spatial discretization as well as error estimates for the fully discrete scheme.

In our previous work \cite{Appuhamilage_AAP} an equivalent
formulation in terms of solutions to stochastic differential
equations in which the effect of the interface is reflected in an
added drift rate involving the local time \cite{Appuhamilage_WRR} of
the process (the stochastic counterpart of the interface condition)
was developed. The numerical simulation of SDEs corresponding to
divergence form operators involving a discontinuous coefficient has
also been the subject of various articles in the recent past. In the
one-dimensional context, schemes based on random walks
\cite{etore2005random,lejay2011simulation,lejay2006scheme,lejay2009monte},
Euler methods \cite{martinez2006discretisation} (based on stochastic
Taylor expansions) and \cite{MartinezT12}, and exact simulation
methods \cite{etore2011exact} have been developed for the simulation
of the solution of such SDEs for the case of conservative
(self-adjoint) interface conditions.

The paper is organized as follows. We first introduce a natural one
parameter family of possible interface conditions coupled to a
diffusion problem, with discontinuous diffusion coefficient, in one
spatial dimension (Section \ref{sec:one}).  Motivating areas of application from the
engineering, ecological and biological sciences are briefly noted.
We then present a reformulation of the problem which naturally allows
the application of finite element methods, where the immersed
interface method is used to ensure that the basis functions satisfy
the reformulated interface condition (Section \ref{sec:ifem}).  We recall standard estimates
for the elliptic problem and then apply them to the case in
question.  We consider both backward Euler and Crank-Nicolson
methods for time discretization. We provide error estimates for the
fully discrete scheme with backward Euler time discetization and
verify rates with numerical examples.  Next, we introduce the
corresponding stochastic differential equation and develop the Euler-
Maruyama scheme for numerical solutions applicable to any one of the
interface conditions (Section \ref{sec:Stochastic}).  We prove convergence of the Euler-Maruyama
scheme under mild assumptions using the approach developed in
\cite{MartinezT12}. Finally, numerical simulations are provided that illustrate our theoretical results in Sections \ref{sec:numd} and \ref{sec:nums}.

\section{Diffusion with Discontinuous Coefficients}
\label{sec:one}
We consider the time dependent diffusion equation in one dimension with a piecewise discontinuous diffusion coefficient across an interface at $x = 0$ on which a one parameter family of interface conditions is prescribed.
We define the time interval $J= [0,T]$ and the domain $\Omega = \mathbb{R}$. The corresponding initial value problem on $\Omega \times J$ is given as
\begin{subequations}
\label{eq:diff}
\begin{align}
\label{eq:diff_pde}
\pdd{t}{u}(t,x) &=\pd{x}\left(\frac{D(x)}{2}\pdd{x}{u(t,x)}\right),
\quad \forall x\in\Omega, \ t \in J\setminus\{0\},\\
\label{eq:diff_sol}
u(t,0^+) &= u(t,0^-), \forall \ t \in J,\\
\label{eq:diff_lambda}
\lambda  \pdd{x}{u}(t,0^+) &= (1-\lambda) \pdd{x}{u}(t,0^-), \ \forall \ t \in J, \\
\label{eq:diff_init}
u(0,x) &= u_0(x), \forall \ x \in \Omega.
\end{align}
\end{subequations}
In model \eqref{eq:diff} the diffusion coefficient $D$ is piecewise defined by
\begin{equation}\label{D(x)}
D(x)=\begin{cases}
D^+&\mbox{if }x>0,\\
D^-&\mbox{if }x<0,
\end{cases}
\end{equation}
for some positive constants $D^+, D^-$. We assume
initial data  $u(0,x)=u_0(x)$ given for all $x \in \Omega$
in equation \eqref{eq:diff_init}. Continuity of the solution
$u(t,x)$ at the interface $x=0$ given in \eqref{eq:diff_sol}, as
well as a condition at $x=0$ given in \eqref{eq:diff_lambda} that
depends on a parameter $\lambda$ with $0<\lambda<1$, and involves
the derivative of the solution, specify the nature of the interface.
The choice of the value of $\lambda$ varies according to the
application,  and may be a function of $D^+$ and  $D^-$.

\begin{rem}
One may note that the extreme cases in which $\lambda = 0, 1$, respectively, correspond to
Neumann boundary conditions at the point of interface.  In particular, therefore the coefficients
are purely constant (smooth) on the corresponding half-line and amenable to standard approaches
to Neumann boundary value problems.  From this perspective there is no loss to restricting
considerations to $0 < \lambda < 1$.
\end{rem}

From the point of view of applications to environmental sciences, the cases of $\lambda =\lambda^*:={D^+\over D^+ + D^-}$ (continuity of flux), $\lambda= \lambda^\#: = 1/2$ (continuity of derivatives), and $\lambda = 0$, arise as solute transport interfaces \cite{Appuhamilage_AAP,appuhamillage2012interfacial}, upwelling of ocean current modeling \cite{RPM08}, and one-sided barrier (reflective) regions, respectively.
There are ecological species, example Fender's blue  butterfly, and aphids for which inter-facial effects are widely reported from experiments, but the precise interface condition is unknown from a mathematical perspective. e.g., see \cite{Schultz2001,turchin1989aggregation}. For the latter, the problem of determining $\lambda$ can also be treated as a statistical problem.

\subsection{Reformulated (Symmetrized) Model}
\label{reformulation}

In order to setup the problem for easy application of the deterministic and stochastic numerical methods, it is convenient to relate the parameter $\lambda$ in the interface condition \eqref{eq:diff_lambda} to one which appears in a reformulation of problem \eqref{eq:diff} written in self-adjoint form. We do this via multiplication of both sides of
the PDE in \eqref{eq:diff_pde} by a piecewise defined (positive) function
\begin{equation}
c(x)=\begin{cases}
c^+:=\lambda/ D^+&\mbox{if }x>0,\\
c^-:=(1-\lambda)/ D^-&\mbox{if }x<0.
\end{cases}
\end{equation}
The resulting PDE can be written
\begin{equation}\label{eq:diffusion2}
c(x)\pdd{t}{u} = \pd{x}\left(\kappa(x)\pdd{x}{u}\right),
\quad \forall x\in\Omega, t \in J\setminus \{0\},
\end{equation}
where the positive function $\kappa$ is defined as
\begin{equation}
\kappa(x) = c(x)\frac{D(x)}{2}
           =\begin{cases}
\kappa^+:=\frac{\lambda}{2}&\mbox{if }x>0,\\
\kappa^-:=\frac{(1-\lambda)}{2}&\mbox{if }x<0.
\end{cases}
\end{equation}
Thus, the interface condition \eqref{eq:diff_lambda} may be interpreted as
\begin{equation}\label{eq:interface2}
\left[\kappa \pdd{x}{u}\right]:= \kappa^+\pdd{x}{u}(t,0^+)-\kappa^-\pdd{x}{u}(t,0^-) = 0,
\end{equation}
i.e., the {\em jump} across the interface of $\kappa\pdd{x}{u}$ at $x=0$, denoted as$\left[\kappa \pdd{x}{u}\right]$,  is zero. Thus, problem \eqref{eq:diff} can be reformulated to have an interface condition that resembles a natural flux condition (conservative) which is more easily amenable to numerical discretization. The reformulated version of problem \eqref{eq:diff} on $\Omega = \mathbb{R}$ can be stated as
\begin{subequations}
\label{eq:problem1}
\begin{align}
\label{eq:diff_reform_pde}
c(x)\pdd{t}{u}(t,x) &=\pd{x}\left(\kappa(x)\pdd{x}{u(t,x)}\right),
\quad \forall x\in\Omega, \ t \in J\setminus \{0\},\\
\label{eq:jump1}
\left[u\right]&:=u(t,0^+)-u(t,0^-) = 0, \ \forall \ t \in J,\\
\label{eq:diff_reform_lambda}
\left[\kappa \pdd{x}{u}\right]&:=\kappa^+\pdd{x}{u}(t,0^+)-\kappa^-\pdd{x}{u}(t,0^-)=0, \ \forall \ t \in J,\\
u(0,x) &= u_0(x), \forall x \in \Omega.
\end{align}
\end{subequations}
\begin{rem}
We note that $c$
plays the role of specific heat capacity times mass density of the
material, and $\kappa$ is a thermal conductivity, in the context
of heat flow.  We observe that for the special
case of $\lambda=\lambda^*:=D^+/(D^++D^-)$ we
have that $c(x)\equiv \mbox{\em constant}$.
\end{rem}

\section{The Immersed Finite Element Method (IFEM)}
\label{sec:ifem}

To construct a discrete solution of the problem \eqref{eq:problem1} and to generate numerical simulations we will need to consider problem \eqref{eq:diff} and hence problem \eqref{eq:problem1} on a finite interval. Thus, in this section we will formulate problem \eqref{eq:problem1} on the domain $\Omega :=(-L,L)$ for $L > 0, L \in \mathbb{R}$. In order for problem \eqref{eq:diff} and problem \eqref{eq:problem1} to be well-posed on $\Omega$ we will impose the boundary conditions
\begin{equation}
\label{eq:bc}
u(t,-L) = u(t,L) = 0, \ \forall \ t \in J,
\end{equation}
on the boundary of $\Omega$.

There are several numerical approaches available for the spatial discretization of parabolic interface problems like problem \eqref{eq:problem1} along with \eqref{eq:bc}. These include domain embedding methods like the fictitious domain method \cite{Glowinski_FDNS94,yu2006fictitious}, implicit derivative matching methods \cite{zhou2006high}, immersed boundary methods \cite{peskin2003immersed} and immersed interface methods based on either finite differences \cite{li2003overview,li2006immersed} or finite element methods \cite{li1998immersed}. In this paper we consider the immersed finite element method (IFEM), which is a numerical techinique based on the finite element method (FEM) for spatially discretizing problem \eqref{eq:problem1} along with \eqref{eq:bc}. Like FEM the IFEM is based on a variational formulation of the initial boundary value problem \eqref{eq:problem1} along with \eqref{eq:bc}. However, unlike the FEM the spatial mesh of the IFEM can be constructed independently of the interface. Also, unlike the FEM, some of the basis functions in the IFEM depend on the interface location at $x=0$ and the interface jump conditions \eqref{eq:jump1} and \eqref{eq:diff_reform_lambda}. We refer the reader to \cite{li2006immersed} for further details.

\subsection{Functional Spaces and the Variational Formulation}
We define the sub-domains $\Omega_1 = (-L, 0)$ and $\Omega_2 = (0, L)$, so that $\Omega = \Omega_1\cup \Omega_2\cup\{0\}$,
For $m>0$ and $1\leq p\leq \infty$, $H^{m}(\Omega)$ is the Sobolev space of order $m$ with norm $||\cdot||_{H^m(\Omega)}$ and for $m=0$, $H^0(\Omega) \equiv L^2(\Omega)$ with norm $||\cdot||_{L^2(\Omega)}$. For $k \geq 0$ we define the functional spaces
\begin{align}
H^{k,0}(\Omega) &= \{v \in L^2(\Omega) \ | \ v \in H^k(\Omega_1)\cap H^k(\Omega_2), [v] = 0, [\kappa \pdd{x}{v}] = 0\},\\
H_0^{k,0}(\Omega) &= \{v \in H^{k,0}(\Omega) \ | \ v(-L) = v(L) = 0\},
\end{align}
along with the norm
\begin{equation}
\label{eq:brokennorm}
||v||^2_{H^{k,0}(\Omega)} := \left(||v||^2_{H^{k}(\Omega_1)}+||v||^2_{H^{k}(\Omega_2)}\right),
\end{equation}
where $H^k(\Omega)$ and $H^k_0(\Omega)$ are the usual Sobolev spaces for $k\geq 1$. On $H^k(\Omega)$ we also define the seminorm  $|v|_{H^k(\Omega)} :=||\frac{\partial^kv}{\partial x^k}||_{L^2(\Omega)}$ and recall that for functions in $H^1_0(\Omega)$, the norms $||v||_{H^1(\Omega)}$ and $|v|_{H^1(\Omega)}$
are equivalent due to Friedrichs' lemma \cite{thomee2006galerkin}. The (semi) norms $|v|_{H^{k,0}(\Omega)}$ are defined in a similar manner to \eqref{eq:brokennorm}.
For a normed vector space $X$ and for $1\leq p\leq \infty$ we define the Lebesgue space $L^p(0,T;X)$ to be the space of all $X$ valued functions $v :J \rightarrow X$ for which $t\rightarrow ||v(t)||_X$ is in the Banach space $L^p(J)$ equipped with the norm
\begin{equation}
\label{eq:timenorm}
||v||_{L^p(X)} = \begin{cases}
\left(\int_0^T||v(t)||^p_X dt\right)^{\frac{1}{p}}, & \text{if} \ p < \infty\\
{\underset{t \in J}{\mathrm{ess \ sup}}} ||v(t)||_X, & \text{if} \ p = \infty.
\end{cases}
\end{equation}
On the space $L^p(0,T;H^{2,0}(\Omega)\cap H_0^1(\Omega))$ we will also define the (semi) norm $|v|_{L^p(H^{2,0}(\Omega))}$  in a similar manner to \eqref{eq:timenorm} using the seminorm  $|v|_{H^{2,0}(\Omega)}$ on $H^{2,0}(\Omega)\cap H_0^1(\Omega))$.
We define the operator $\mathcal{L}$ as
\begin{equation}
\begin{split}
&\mathcal{L}: C^0([0,1];H_0^{1,0}(\Omega))\times H_0^{1,0}(\Omega) \rightarrow C^0([0,1]; \mathbb{R}); \\
&\mathcal{L}(u,v) = \int_\Omega \kappa(x)\pdd{x}{u(t)}\pdd{x}{v} dx = \int_{\Omega^-} \kappa^-\pdd{x}{u(t)}\pdd{x}{v} dx+\int_{\Omega^+} \kappa^+\pdd{x}{u(t)}\pdd{x}{v} dx.
\end{split}
\end{equation}
The variational formulation corresponding to problem \eqref{eq:problem1} along with \eqref{eq:bc} is: \\
\noindent Find $u:J\longrightarrow H_0^{1,0}(\Omega)$ such that
\begin{equation}
\label{eq:problem2}
\int_\Omega c(x) \pdd{t}{u(t)} v dx +\mathcal{L}(u,v) = 0, \ \forall v \in H_0^{1,0}(\Omega).
\end{equation}

\subsection{Spatial Discretization Using Immersed Finite Elements}

We partition $\overline{\Omega} = [-L,L]$ using a uniform mesh
\[ -L = x_0 < x_1 < x_2 < \ldots < x_N = L.\]
We define the mesh step size, $h_i := x_i-x_{i-1}$, to be a constant $h$ for all subintervals in the partition $i = 1,2,\ldots , N$.
The discrete mesh is then denoted $\tau_h = {\underset{\hspace*{-15pt}i=0}{\overset{\hspace*{-15pt}N-1}{\bigcup E_i}}}$ with $E_i = [x_i, x_{i+1}]$.

At every node $x_i$ we define basis functions $\phi_j(x)$ as
\begin{equation}
\phi_j(x_i) = \begin{cases} 1, & i = j, \\
  0, & i \neq j,
\end{cases}
\end{equation}
satisfying the interface conditions $[\phi_j] = 0$ and $[\kappa\phi_j'] = 0$. We consider the linear IFE space
\begin{equation}
S_h^1(\Omega) = \text{span}\{\phi_j\}_{j=1}^{N-1}.
\end{equation}
Since $\forall j , \phi_j \in H^{1,0}(\Omega)$, we have that $S_h^1(\Omega) \subset H^{1,0}(\Omega)$. Let $x_j < 0 < x_{j+1}$, for some $j$, then the basis functions $\phi_j$ and $\phi_{j+1}$ are the only ones that need to be modified to satisfy the flux jump condition. This modification can be made using the method of undetermined coefficients as is done in \cite{li1998immersed}. We refer the reader to \cite{lin2007error,li1998immersed} for the construction of the IFE basis functions $\phi_j$.

\subsubsection{Interpolation Functions and Error Estimates}
To derive the error estimates for the time dependent problem \eqref{eq:problem1} along with \eqref{eq:bc}, we will use the error analysis derived for the corresponding stationary problem in \cite{li1998immersed,lin2007error} which is outlined below. Consider the stationary problem :
\begin{subequations}
\label{eq:elliptic}
\begin{align}
-\pd{x}\left(\kappa(x)\pdd{x}{u(t,x)}\right) & = f(x),
\ \forall x\in\Omega,\\
[u] :=u(0^+)-u(0^-) &= 0,\\
\left[\kappa\pdd{x}{u}\right]:=\kappa^+\pdd{x}{u}(0^+)-\kappa^-\pdd{x}{u}(0^-)&=0,\\
u(-L) = u(L) & = 0,
\end{align}
\end{subequations}
with $f \in L^2(\Omega)$.
We define the linear functional $\ell$ as
\begin{equation}
\ell:C^0([0,1];H_0^{1,0}(\Omega)) \rightarrow C^0([0,1]; \mathbb{R}); \
\ell(u)  = \int_\Omega fu dx.
\end{equation}
The weak form of problem \eqref{eq:elliptic} is:\\
Find $u \in H_0^{1,0}(\Omega)$ such that
\begin{equation}
\label{eq:ellipticweak}
\mathcal{L}(u,v) = (f,v), \forall \ v \in H_0^{1,0}(\Omega).
\end{equation}

The discrete variational problem using immersed finite elements is:\\
Find $u_h \in S_{h,0}^1:=S_h^1(\Omega)\cap H_0^{1,0}(\Omega)$ such that
\begin{equation}
\label{eq:ellipticdiscrete}
\mathcal{L}(u_h,v_h) = (f,v_h), \forall \ v_h \in S_{h,0}^1(\Omega).
\end{equation}

Based on the estimates for the interpolants error estimates for the IFE solutions are derived in \cite{li1998immersed,lin2007error}.
\begin{thm}[Theorem 4 from \cite{lin2007error}]
\label{thm:3}
Let $u \in H^{2,0}(\Omega)\cap H_0^{1,0}(\Omega)$ and $u_h \in S_{h,0}^1(\Omega)\subset H^{1,0}_0(\Omega)$ be solutions to \eqref{eq:ellipticweak} and \eqref{eq:ellipticdiscrete}, respectively. Then $\exists$ a positive constant $C$ independent of $u$ and $h$ such that
\begin{equation}
||u-u_h||_{H^{0,0}(\Omega)}+h||u-u_h||_{H^{1,0}(\Omega)} \leq C\rho h^2|u|_{H^{2,0}(\Omega)},
\end{equation}
where $\rho := \mathrm{max}\{\ds\frac{\kappa^-}{\kappa^+},\ds\frac{\kappa^+}{\kappa^-}\}$.
\end{thm}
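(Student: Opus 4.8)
The plan is to run the classical C\'ea plus Aubin--Nitsche duality argument, whose only non-routine ingredient is the immersed finite element interpolation estimate already established in \cite{li1998immersed,lin2007error}: for every $w\in H^{2,0}(\Omega)\cap H_0^{1,0}(\Omega)$ the IFE interpolant $I_hw\in S_{h,0}^1(\Omega)$ satisfies
\begin{equation}
\|w-I_hw\|_{H^{0,0}(\Omega)}+h\,|w-I_hw|_{H^{1,0}(\Omega)}\le C\rho\, h^2\,|w|_{H^{2,0}(\Omega)}.
\label{eq:interpbound}
\end{equation}
First I would record the structural properties of the bilinear form $\mathcal{L}$. Since $0<\lambda<1$, both $\kappa^+=\lambda/2$ and $\kappa^-=(1-\lambda)/2$ are strictly positive, so $0<\kappa_{\min}:=\min\{\kappa^+,\kappa^-\}\le \kappa(x)\le \kappa_{\max}:=\max\{\kappa^+,\kappa^-\}$ on $\Omega$. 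Consequently $\mathcal{L}$ is symmetric and bounded, $|\mathcal{L}(v,w)|\le \kappa_{\max}\,|v|_{H^{1,0}(\Omega)}\,|w|_{H^{1,0}(\Omega)}$, and coercive, $\mathcal{L}(v,v)\ge \kappa_{\min}\,|v|_{H^{1,0}(\Omega)}^2$; combined with Friedrichs' inequality this makes $\mathcal{L}(v,v)^{1/2}$ a norm on $H_0^{1,0}(\Omega)$ equivalent to $\|\cdot\|_{H^{1,0}(\Omega)}$, so that \eqref{eq:ellipticweak} and \eqref{eq:ellipticdiscrete} are uniquely solvable by Lax--Milgram. I also note that $\kappa_{\max}/\kappa_{\min}=\rho$.

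Next I would prove the energy estimate. Because the IFE basis is built to satisfy both jump conditions $[\phi_j]=0$ and $[\kappa\phi_j']=0$ exactly, the discrete space is conforming, $S_{h,0}^1(\Omega)\subset H_0^{1,0}(\Omega)$, so subtracting \eqref{eq:ellipticdiscrete} from \eqref{eq:ellipticweak} gives Galerkin orthogonality $\mathcal{L}(u-u_h,v_h)=0$ for all $v_h\in S_{h,0}^1(\Omega)$. For arbitrary $v_h$ this yields
\begin{equation}
\kappa_{\min}\,|u-u_h|_{H^{1,0}(\Omega)}^2\le \mathcal{L}(u-u_h,u-u_h)=\mathcal{L}(u-u_h,u-v_h)\le \kappa_{\max}\,|u-u_h|_{H^{1,0}(\Omega)}\,|u-v_h|_{H^{1,0}(\Omega)},
\end{equation}
hence $|u-u_h|_{H^{1,0}(\Omega)}\le \rho\,\inf_{v_h}|u-v_h|_{H^{1,0}(\Omega)}$; choosing $v_h=I_hu$ and invoking \eqref{eq:interpbound} bounds $h\,\|u-u_h\|_{H^{1,0}(\Omega)}$ by $C\rho\,h^2\,|u|_{H^{2,0}(\Omega)}$, with the $\rho$-powers absorbed into the constant as in the refined analysis of \cite{lin2007error}.

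The $L^2$ part is then handled by duality. Put $\psi:=u-u_h$ and let $w\in H_0^{1,0}(\Omega)$ solve $\mathcal{L}(v,w)=(\psi,v)$ for all $v\in H_0^{1,0}(\Omega)$; by symmetry of $\mathcal{L}$ this is exactly the interface problem \eqref{eq:elliptic} with data $\psi\in L^2(\Omega)$, so solving the ODE $-(\kappa w')'=\psi$ explicitly on $\Omega_1$ and $\Omega_2$ and matching through $[w]=0$, $[\kappa w']=0$ gives $w\in H^{2,0}(\Omega)$ with $|w|_{H^{2,0}(\Omega)}\le C\,\|\psi\|_{H^{0,0}(\Omega)}$. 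Taking $v=\psi$ and using Galerkin orthogonality once more,
\begin{equation}
\|\psi\|_{H^{0,0}(\Omega)}^2=\mathcal{L}(\psi,w)=\mathcal{L}(\psi,w-I_hw)\le \kappa_{\max}\,|\psi|_{H^{1,0}(\Omega)}\,|w-I_hw|_{H^{1,0}(\Omega)}\le C\rho\, h\,|\psi|_{H^{1,0}(\Omega)}\,\|\psi\|_{H^{0,0}(\Omega)},
\end{equation}
so $\|\psi\|_{H^{0,0}(\Omega)}\le C\rho\, h\,|\psi|_{H^{1,0}(\Omega)}$; combining with the energy estimate and adding the two displays gives the stated bound $\|u-u_h\|_{H^{0,0}(\Omega)}+h\,\|u-u_h\|_{H^{1,0}(\Omega)}\le C\rho\,h^2\,|u|_{H^{2,0}(\Omega)}$.

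The genuinely non-trivial ingredient is the interpolation estimate \eqref{eq:interpbound}: it is what forces the modified, non-polynomial basis functions near $x=0$, and its proof in \cite{li1998immersed,lin2007error} is where the $\rho$-dependence originates. Beyond importing it, the two points requiring care are (i) verifying conformity $S_{h,0}^1(\Omega)\subset H_0^{1,0}(\Omega)$, since otherwise Galerkin orthogonality fails and an extra consistency term would have to be controlled; and (ii) the $H^{2,0}$-regularity of the dual solution, which is elementary in one dimension but is precisely where the broken Sobolev framework $H^{k,0}(\Omega)$ is essential. I would also track carefully how $\rho$ enters the C\'ea quasi-optimality constant versus the interpolation constant so as to land on the single power of $\rho$ quoted in the statement, rather than the higher power a naive chaining of the two estimates would produce.
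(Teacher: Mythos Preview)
The paper does not actually prove this theorem: it is quoted verbatim from \cite{lin2007error} (with the $L^\infty$ companion from \cite{li1998immersed}) and used as a black box in the subsequent semi-discrete and fully discrete analyses. Your C\'ea plus Aubin--Nitsche duality argument is precisely the standard route taken in those references, and all the ingredients you list --- conformity $S_{h,0}^1(\Omega)\subset H_0^{1,0}(\Omega)$ from the exact satisfaction of both jump conditions by the IFE basis, Galerkin orthogonality, coercivity/continuity of $\mathcal L$ with ratio $\kappa_{\max}/\kappa_{\min}=\rho$, the imported interpolation bound, and the one-dimensional $H^{2,0}$ regularity of the dual problem --- are correct and in the right order.

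The only point worth flagging is the one you already raised: a straightforward chaining of C\'ea (which contributes a factor $\rho$) with the interpolation estimate (which contributes another $\rho$) yields $\rho^2$ in the $H^{1,0}$ bound and $\rho^3$ in the $L^2$ bound, not the single $\rho$ stated. Recovering the sharper dependence requires working directly in the energy norm $\mathcal L(\cdot,\cdot)^{1/2}$ and using the interpolation estimate in the form that already absorbs the $\kappa$-weighting, as is done in \cite{lin2007error}; this is a bookkeeping refinement rather than a different idea, and you have correctly identified it as the place where care is needed.
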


\begin{thm}[Theorem 3.4 from \cite{li1998immersed}]
Let $u \in H^{2,0}(\Omega) \cap H_0^{1,0}(\Omega)$ and $u_h \in S_{h,0}^1(\Omega)\subset H^{1,0}_0(\Omega)$ be solutions to \eqref{eq:ellipticweak} and \eqref{eq:ellipticdiscrete}, respectively. Then $\exists$ a positive constant $\bar{C}$ independent of $u$ and $h$ such that
\begin{equation}
||u-u_h||_{L^{\infty}(\Omega)} \leq \bar{C}h^2||\frac{\partial^2u}{\partial x^2}||_{L^{\infty}(\Omega)},
\end{equation}
where $\bar{C} = \ds\frac{2\mathrm{max}\{1,\rho\}}{\mathrm{min}\{1,\rho\}}+\ds\frac{3}{2}$, and $\rho$ as defined in Theorem \ref{thm:3}.
\end{thm}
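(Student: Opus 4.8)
The plan is to obtain the $L^\infty$ bound as a consequence of the interpolation estimate in the max norm (the commented-out Theorem 3.2) together with a bound on $\|I_h u - u_h\|_{L^\infty(\Omega)}$, exploiting the fact that $I_h u - u_h$ lies in the finite-dimensional space $S_{h,0}^1(\Omega)$. First I would write the triangle inequality
\[
\|u - u_h\|_{L^\infty(\Omega)} \le \|u - I_h u\|_{L^\infty(\Omega)} + \|I_h u - u_h\|_{L^\infty(\Omega)},
\]
and control the first term directly by the interpolation estimate, which already gives a bound of the form $\bar C_1 h^2 \|\partial^2 u/\partial x^2\|_{L^\infty(\Omega)}$ with a constant depending only on $\rho$. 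So the work is all in the second term, which is a discrete function and hence can be estimated through $H^1$- or $H^{0,0}$-type norms via an inverse inequality on $S_{h,0}^1$, or — preferably, to keep the clean constant $\bar C$ — by a pointwise/nodal argument.

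The key steps, in order, would be: (i) record that $w_h := I_h u - u_h \in S_{h,0}^1(\Omega)$ and that by Galerkin orthogonality $\mathcal{L}(u - u_h, v_h) = 0$ for all $v_h \in S_{h,0}^1(\Omega)$, hence $\mathcal{L}(w_h, v_h) = \mathcal{L}(I_h u - u, v_h)$; (ii) analyze the error equation elementwise — on each element $E_i$ not containing the interface, $\kappa$ is constant and the IFE functions reduce to ordinary linear Lagrange elements, while on the single interface element the modified basis functions (constructed by undetermined coefficients so that $[\phi_j]=0$, $[\kappa \phi_j']=0$) must be handled explicitly; (iii) use the fact that a piecewise-linear (in the IFE sense) function is determined by its nodal values, and bound its $L^\infty$ norm on each element by the maximum of the adjacent nodal values — here one needs the elementary observation that on the interface element the IFE shape functions still satisfy a maximum-type principle, i.e. their values lie between the nodal values up to a factor controlled by $\max\{1,\rho\}/\min\{1,\rho\}$; (iv) bound the nodal errors $|w_h(x_i)| = |I_hu(x_i) - u_h(x_i)| = |u(x_i) - u_h(x_i)|$ by a discrete Green's function / superconvergence-at-nodes argument, or alternatively bound $\|w_h\|_{L^\infty}$ through $\|w_h\|_{H^{1,0}}$ plus the energy estimate $\|w_h\|_{H^{1,0}} \le C\|I_hu - u\|_{H^{1,0}} \le C\rho h |u|_{H^{2,0}}$; (v) assemble the pieces and simplify the constant to the stated form $\bar C = \tfrac{2\max\{1,\rho\}}{\min\{1,\rho\}} + \tfrac{3}{2}$.

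I expect the main obstacle to be step (iii)–(iv): getting the sharp constant $\bar C$ rather than merely \emph{some} constant. A soft argument through Sobolev embedding in 1D ($\|w_h\|_{L^\infty} \lesssim \|w_h\|_{H^1}$) plus the energy estimate yields an $O(h)$-looking bound unless one also uses nodal superconvergence, and tracking the interface element's contribution to the constant requires the explicit formulas for the modified shape functions $\phi_j,\phi_{j+1}$ from [li1998immersed]. The cleanest route, and the one I would follow, is to reduce everything to the nodal values: show $w_h$ is exactly (IFE-)linear on each element, bound it on each element by its endpoint values (incurring the factor $\max\{1,\rho\}/\min\{1,\rho\}$ only on the interface element), and then invoke the pointwise error bound at mesh nodes that holds for this IFE scheme in one dimension. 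Combined with the interpolation estimate contributing the $\tfrac{3}{2}$ term, this produces the claimed constant. The remaining details — verifying the discrete maximum property of the interface shape functions and the nodal error bound — are the routine but essential computations, and they are carried out in [li1998immersed], so I would cite them at the appropriate points rather than reproduce them.
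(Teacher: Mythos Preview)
The paper does not prove this statement at all: it is quoted verbatim as Theorem~3.4 from \cite{li1998immersed} and simply cited, with the surrounding text saying only that ``error estimates for the IFE solutions are derived in \cite{li1998immersed,lin2007error}.'' There is therefore no proof in the paper to compare your proposal against.

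That said, a brief remark on your sketch: the overall architecture (triangle inequality, interpolation estimate for $u-I_hu$, then control of the discrete piece $I_hu-u_h$) is the standard one and is indeed how \cite{li1998immersed} proceeds. Your step (iv), however, is where the real content lies, and you have left it open between two quite different routes---energy estimate plus inverse inequality versus nodal superconvergence---without committing. The energy route you mention would not produce the stated $O(h^2)$ bound in $L^\infty$ with the explicit constant $\bar C$; it loses a power of $h$. The argument in \cite{li1998immersed} instead exploits that in one dimension the IFE Galerkin solution is \emph{exact at the nodes} (a consequence of the fact that the Green's function for the 1D problem lies in the trial space on each side of the interface), so $w_h(x_i)=0$ for all $i$ and the entire error reduces to the interpolation error. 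That is why the constant $\bar C$ coincides with the interpolation constant. Your proposal gestures at this (``nodal error bound \ldots carried out in \cite{li1998immersed}'') but does not identify nodal exactness as the mechanism, and without it the constant you would obtain is not the one stated.
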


\subsection{Semi-Discrete Schemes: The Continuous Time Galerkin Immersed Finite Element Problem}
We now study the convergence properties of a semi-discrete scheme applied to the time dependent problem \eqref{eq:problem1} obtained by spatially discretizing the problem using IFEM. Denoting the $L^2(\Omega)$ inner product as $(\cdot,\cdot)$ the weak formulation of the semi-discrete IFEM problem based on \eqref{eq:problem2} is\\
\noindent Find $u_h:J \longrightarrow S_{h,0}^1(\Omega)$ such that
\begin{equation}
\label{eq:problem3}
(c\pdd{t}{u_h}(t), v_h) +\mathcal{L}(u_h,v_h) = 0, \ \forall v_h \in S_{h,0}^1(\Omega).
\end{equation}
Let $u(t) \in H^{2,0}(\Omega)\cap H_0^{1,0}(\Omega)$ for $t \in J\setminus \{0\}$ be the solution to \eqref{eq:problem2}. The elliptic projection $P_hu$
is defined to be the solution to the auxilliary problem:

\noindent Find $P_hu:J\longrightarrow S_{h,0}^1(\Omega)$ such that $\forall v_h \in S_{h,0}^1(\Omega), \forall \ t \in J$,
\begin{equation}
\label{eq:aux}
\mathcal{L}(u(t)-P_hu(t),v_h) = 0; \implies \ \int_\Omega \kappa\pdd{x}{u(t)}\pdd{x}{v_h}dx = \int_\Omega\pdd{x}{P_hu(t)}\pdd{x}{v_h}dx;
\end{equation}
In addition we have $\forall v_h \in S_{h,0}^1(\Omega), \forall \ t \in J$,
\begin{equation}
\label{eq:aux2}
\mathcal{L}(\pdd{t}{u}(t)-\pdd{t}{(P_hu)}(t),v_h) = 0; \implies \ \int_\Omega \kappa\pdd{x}{}\left(\pdd{t}{u(t)}\right)\pdd{x}{v_h}dx = \int_\Omega\pdd{x}{}\left(\pdd{t}{P_hu(t)}\right)\pdd{x}{v_h}dx;
\end{equation}
Thus, $\pdd{t}{P_hu}(t)$ is the elliptic projection of $\pdd{t}{u}(t), \forall t \in J$. We have the following result

\begin{thm}
Let $\forall \ t \in J$, $u(t) \in H^{2,0}(\Omega)\cap H_0^{1,0}(\Omega)$, and $u_h(t) \in S_{h,0}^1(\Omega)$, be the solutions to problems \eqref{eq:problem2}, and \eqref{eq:problem3} respectively. Then $\exists$ a positive constant $C$ such that
\begin{equation}
\label{eq:result1}
||u-u_h||_{L^\infty(L^2(\Omega))} \leq ||(u-u_h)(0)||_{L^2(\Omega)}+C\rho h^2\left[|u|_{L^\infty(H^{2,0}(\Omega))}+\alpha_1|\pdd{t}{u}|_{L^1(H^{2,0}(\Omega))}\right],
\end{equation}
where
\begin{equation}
\label{eq:alpha}
\alpha_1 = \ds\frac{\mathrm{max}\{c^+,c^-\}}{\mathrm{min}\{c^+,c^-\}}
\end{equation}
and $\rho$ is defined in Theorem \ref{thm:3}
\end{thm}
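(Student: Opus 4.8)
I do not see the author's proof yet, so here is how I would approach it.

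The plan is to run the standard elliptic (Ritz) projection argument for parabolic Galerkin methods, carried out in the $c$-weighted $L^2$ inner product so that the distortion ratio $\alpha_1$ of \eqref{eq:alpha} appears automatically. Write $\|\cdot\|:=\|\cdot\|_{L^2(\Omega)}$, $(v,w)_c:=\int_\Omega c(x)vw\,dx$, $\|v\|_c:=(v,v)_c^{1/2}$, and set $c_{\min}:=\min\{c^+,c^-\}$, $c_{\max}:=\max\{c^+,c^-\}$, so that $c_{\min}\|v\|^2\le\|v\|_c^2\le c_{\max}\|v\|^2$ and $\alpha_1=c_{\max}/c_{\min}$. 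Let $P_hu(t)$ denote the elliptic projection of \eqref{eq:aux}; since $P_hu(t)$ is the IFE solution \eqref{eq:ellipticdiscrete} for the datum $f$ for which $u(t)$ solves \eqref{eq:ellipticweak}, \thmref{thm:3} applies verbatim to the pair $(u(t),P_hu(t))$. Split the error as $u-u_h=(u-P_hu)+(P_hu-u_h)=:\eta+\theta$, with $\theta(t)\in S_{h,0}^1(\Omega)$.

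First I would record the inputs from the IFE elliptic theory. \thmref{thm:3}, applied pointwise in $t$, gives $\|\eta(t)\|\le C\rho h^2|u(t)|_{H^{2,0}(\Omega)}$; and since \eqref{eq:aux2} says that $\partial_t P_hu(t)$ is precisely the elliptic projection of $\partial_t u(t)$, the same theorem applied to $\partial_t u$ yields $\|\partial_t\eta(t)\|\le C\rho h^2|\partial_t u(t)|_{H^{2,0}(\Omega)}$ (using the regularity implicit in \eqref{eq:result1}, namely $\partial_t u\in L^1(J;H^{2,0}(\Omega))$). Next I would derive the error equation for $\theta$: taking $v=v_h\in S_{h,0}^1(\Omega)$ in the continuous weak form \eqref{eq:problem2}, subtracting the semidiscrete equation \eqref{eq:problem3}, and invoking the Galerkin orthogonality $\mathcal{L}(\eta(t),v_h)=0$ built into the definition \eqref{eq:aux} of $P_hu$, one obtains
\[
(\partial_t\theta,v_h)_c+\mathcal{L}(\theta,v_h)=-(\partial_t\eta,v_h)_c\qquad\forall\,v_h\in S_{h,0}^1(\Omega),\ \forall\,t\in J .
\]

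The core step is the energy estimate obtained by choosing $v_h=\theta(t)$. Since $\theta(\cdot)$ is a differentiable curve in the finite-dimensional space $S_{h,0}^1(\Omega)$ and $c$ is time-independent, $(\partial_t\theta,\theta)_c=\tfrac12\frac{d}{dt}\|\theta\|_c^2$, while $\mathcal{L}(\theta,\theta)=\int_\Omega\kappa(\partial_x\theta)^2\ge0$ is simply discarded (positive semidefiniteness of $\mathcal{L}$ is all that is used, so no Gronwall inequality is needed; coercivity of $\mathcal{L}$ on $H_0^{1,0}(\Omega)$ would be relevant only for an additional $L^2(J;H^{1,0})$ bound). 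Cauchy--Schwarz together with the norm equivalence give $\tfrac12\frac{d}{dt}\|\theta\|_c^2\le\|c\,\partial_t\eta\|\,\|\theta\|\le c_{\max}c_{\min}^{-1/2}\|\partial_t\eta\|\,\|\theta\|_c$, hence $\frac{d}{dt}\|\theta\|_c\le c_{\max}c_{\min}^{-1/2}\|\partial_t\eta\|$ after cancelling $\|\theta\|_c$ (the usual one-sided/regularization argument handles times where $\|\theta\|_c=0$). Integrating on $[0,t]$ and using $\|\theta(t)\|\le c_{\min}^{-1/2}\|\theta(t)\|_c$ yields
\[
\|\theta(t)\|\le c_{\min}^{-1/2}\|\theta(0)\|_c+\alpha_1\int_0^t\|\partial_t\eta(s)\|\,ds .
\]
To conclude, I would combine $\|(u-u_h)(t)\|\le\|\eta(t)\|+\|\theta(t)\|$, estimate the initial contribution by $\|\theta(0)\|_c\le c_{\max}^{1/2}\|\theta(0)\|\le c_{\max}^{1/2}(\|(u-u_h)(0)\|+\|\eta(0)\|)$ (with the natural normalization $u_h(0)=P_hu(0)$ this term vanishes and the cleanest constant results), insert the two $\eta$-bounds, absorb the $\|\eta(0)\|$ and $\|\eta(t)\|$ contributions into $C\rho h^2|u|_{L^\infty(H^{2,0}(\Omega))}$ and the time integral of $\|\partial_t\eta\|$ into $C\alpha_1\rho h^2|\partial_t u|_{L^1(H^{2,0}(\Omega))}$, and take the supremum over $t\in J$, which is \eqref{eq:result1}.

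I do not expect a genuine obstacle here; the only point that needs care is the constant bookkeeping that yields exactly the factor $\alpha_1$ on the $|\partial_t u|_{L^1(H^{2,0}(\Omega))}$ term -- this is precisely why the right-hand side is estimated as $\|c\,\partial_t\eta\|\,\|\theta\|$ and only afterwards converted to the $c$-weighted norm -- together with the (inessential) normalization of the discrete initial data that keeps the weighting factor off the $\|(u-u_h)(0)\|$ term. Everything else (the elliptic estimate \thmref{thm:3}, the Galerkin orthogonality from \eqref{eq:aux}, the energy identity, and the termwise integration in time) is routine.
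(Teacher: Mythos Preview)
Your proposal is correct and follows essentially the same route as the paper: the Ritz/elliptic projection splitting $u-u_h=\eta+\theta$, the error equation $(c\,\partial_t\theta,v_h)+\mathcal{L}(\theta,v_h)=-(c\,\partial_t\eta,v_h)$ obtained via Galerkin orthogonality, the energy inequality from $v_h=\theta$ with $\mathcal{L}(\theta,\theta)\ge0$ discarded, and integration in time together with the $\eta$-bounds from \thmref{thm:3}. Your bookkeeping in the $c$-weighted norm is in fact slightly more explicit than the paper's (which passes from $\tfrac12\frac{d}{dt}\|\sqrt{c}\,\xi\|^2\le\|c\,\partial_t\eta\|\,\|\xi\|$ directly to $\frac{d}{dt}\|\xi\|\le\alpha_1\|\partial_t\eta\|$), but the argument and the resulting constants are the same.
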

\begin{proof}
The proof is quite standard and we just give the salient details here. We refer the reader to \cite{thomee2006galerkin} for details on similar proofs. We split the error into two parts
\begin{equation}
u(t)-u_h(t) = (u-P_hu)(t)+(P_hu-u_h)(t) = \eta(t)+\xi(t).
\end{equation}
From Theorem \ref{thm:3} we have $\forall \ t \in J$
\begin{align}
\label{eq:boundsoneta}
||\eta(t)||_{L^2(\Omega)} &\leq C\rho h^2|u(t)|_{H^{2,0}(\Omega)},\\
||\pdd{t}{\eta}(t)||_{L^2(\Omega)} &\leq C\rho h^2|\pdd{t}{u}(t)|_{H^{2,0}(\Omega)}.
\end{align}
To obtain bounds on $\xi$ we insert $\xi$ into the variational formulation \eqref{eq:problem3}. We have $\forall v_h \in S_{h,0}^1(\Omega)$,
\begin{equation}
\label{eq:1}
(c\pdd{t}{\xi},v_h)+\mathcal{L}(\xi,v_h)= (c\pdd{t}{(P_hu)},v_h)-(c\pdd{t}{(u_h)},v_h)+\mathcal{L}(P_hu,v_h)-\mathcal{L}(u_h,v_h).
\end{equation}
Using $\mathcal{L}(P_hu,v_h) = \mathcal{L}(u,v_h)$ and the identity $(c\pdd{t}{u_h},v_h)+\mathcal{L}(u_h,v_h) = 0$, $\forall v_h \in S_{h,0}^1(\Omega)$, we have $\forall v_h \in S_{h,0}^1(\Omega)$
\begin{equation}
\begin{split}
\label{eq:2}
(c\pdd{t}{\xi},v_h)+\mathcal{L}(\xi,v_h)&= (c\pdd{t}{(P_hu)},v_h)+\mathcal{L}(u,v_h)\\
& = (c\pdd{t}{(P_hu)},v_h)-(c\pdd{t}{(u)},v_h)\\
& = -(c\pdd{t}{\eta},v_h).
\end{split}
\end{equation}
Allowing $v_h = \xi$ and using the Cauchy-Schwarz inequality we have
\begin{equation}
\ds\frac{1}{2} \od{t}{||\sqrt{c}\xi(t)||_{L^2(\Omega)}^2}+\mathcal{L}(\xi(t),\xi(t)) \leq ||c\pdd{t}{\eta}(t)||_{L^2(\Omega)}||\xi(t)||_{L^2(\Omega)}.
\end{equation}
Since $\mathcal{L}(\xi(t),\xi(t))$ is nonnegative, by dropping this term and dividing by $||\xi(t)||_{L^2(\Omega)} \neq 0$ we obtain the stability result
\begin{equation}
\od{t}{||\xi(t)||_{L^2(\Omega)}}\leq \alpha_1||\pdd{t}{\eta}(t)||_{L^2(\Omega)},
\end{equation}
Integrating from 0 to $\tau \leq T$ we have
\begin{equation}
||\xi(\tau)||_{L^2(\Omega)} \leq ||\xi(0)||_{L^2(\Omega)}+\alpha_1\int_0^\tau||\pdd{t}{\eta}(s)||_{L^2(\Omega)}ds.
\end{equation}
From Theorem \ref{thm:3} we have
\begin{equation}
\begin{split}
||\xi(0)||_{L^2(\Omega)} = ||(P_hu-u_h)(0)||_{L^2(\Omega)} &\leq ||(u-u_h)(0)||_{L^2(\Omega)}+||(u-P_hu)(0)||_{L^2(\Omega)}\\
&\leq ||(u-u_h)(0)||_{L^2(\Omega)}+C\rho h^2|u(0)|_{H^{2,0}(\Omega)}.
\end{split}
\end{equation}
Using Theorem \ref{thm:3} for $\pdd{t}{u}$ and $\pdd{t}{(P_hu)}$ to get
\begin{equation}
||\xi||_{L^{\infty}(L^2(\Omega))} \leq ||(u-u_h)(0)||_{L^2(\Omega)} +C\rho h^2\left(|u(0)|_{H^{2,0}(\Omega)}+\alpha_1\int_0^\tau|\pdd{t}{u}(s)|_{H^{2,0}(\Omega)}ds\right).
\end{equation}
Combining the estimates on $\xi$ and $\eta$ together gives us the result \eqref{eq:result1}.
\end{proof}

Estimates in space in the semi-norm $|\cdot|_{H^{1,0}(\Omega)}$ can also be derived by choosing $v_h = \xi_t$ in \eqref{eq:2}. See \cite{attanayake2011convergence} for details.
\subsection{Fully Discrete Schemes: Error Estimates}
In this section we develop error estimates for the fully discrete numerical scheme obtained by applying a backward Euler discretization or a Crank-Nicolson update in time. Given a time step $\Delta t > 0$ we define discrete time levels $t_k = k\Delta t$ for $k = 0,1,2,\ldots, M$ with $t_M =M\Delta t = T$.  The fully discrete solution at $t_k$ is denoted as $u_h^k$.

\subsubsection{Discretization in Time with $\theta$ Schemes}
We consider a one parameter family of finite difference discretizations in time called $\theta$ schemes.
The fully discrete variational problem using a $\theta$ scheme in time is:\\
Find $u_h^k \in  S_{h,0}^1(\Omega), k = 1,2,\ldots,M$ such that
\begin{equation}
\label{eq:problem3-timediscrete}
\begin{split}
&\left(c(x)\ds\frac{u_h^k-u_h^{k-1}}{\Delta t}, v_h\right) +\mathcal{L}(u_h^{k-\theta},v_h) = 0, \ \forall v_h \in S_{h,0}^1(\Omega), \forall \ k = 1,2,\ldots, M,\\
&(u_h^0-u_0,v_h) = 0, \ \forall v_h \in S_{h,0}^1(\Omega),
\end{split}
\end{equation}
where $0\leq \theta\leq 1$ and
\begin{equation}
u_h^{k-\theta} = \theta u_h^k+(1-\theta)u_h^{k-1}.
\end{equation}
Thus, if $\theta = 0$ we obtain the forward Euler method in time, if $\theta = 1$ we obtain the backward Euler method and for $\theta = \frac{1}{2}$ we obtain the Crank-Nicolson scheme. Here we consider $\theta = \frac{1}{2}$ and develop the error estimates for the IFEM method with a Crank Nicolson time discretization. For other values of $\theta \in [0,1]$ the analysis is analogous \cite{Suli,thomee2006galerkin}.

\subsubsection{Crank-Nicolson Discretization in Time}

The fully discrete variational problem using a Crank-Nicolson time discretization is:\\
Find $u_h^k \in  S_{h,0}^1(\Omega)$ such that
\begin{equation}
\label{eq:problem3-CN}
\left(c(x)\ds\frac{u_h^k-u_h^{k-1}}{\Delta t}, v_h\right) +\mathcal{L}(\ds\frac{u_h^k+u_h^{k-1}}{2},v_h) = 0, \ \forall v_h \in S_{h,0}^1(\Omega), \forall \ k = 1,2,\ldots, M,
\end{equation}

The fully discrete variational problem \eqref{eq:problem3-CN} satisfies the following error estimate.
\begin{thm}
\label{thm:errorCN}
Let $u(t_k) \in H^{2,0}(\Omega)\cap H_0^1(\Omega)$ and $u_h^k \in S_{h,0}^1(\Omega); k = 1,2,\ldots, M$ be the solutions to problems \eqref{eq:problem2} and \eqref{eq:problem3-CN}. Then $\exists$ a positive constant $C$ such that
\begin{equation}
\label{eq:result4CN}
\begin{split}
{\underset{0 \le k \le M}{\mathrm{max}}}||u(t_k)-u_h^k||_{L^2(\Omega)} &\leq C\rho h^2\left(\alpha_3^T{\underset{0 \le k \le M}{\mathrm{max}}}|u(t)|_{H^{2,0}(\Omega)}+\alpha_4\int_0^{T}|\pdd{t}{u}(s)|_{H^{2,0}(\Omega)}ds\right)\\
&+\alpha_4\frac{\Delta
t^2}{2}\left(\frac{1}{4}\int_0^{T}||\frac{\partial^3u}{\partial t^3}(s)||_{L^2(\Omega)}ds+\alpha_5 \int_0^{T}|\frac{\partial^2u}{\partial t^2}(s)|_{H^{2,0}(\Omega)}ds\right)\\
&+\alpha_3^T||u(0)-u_h^0||_{L^2(\Omega)}.
\end{split}
\end{equation}
where $\alpha_3$, $\alpha_4$ and $\alpha_5$  are defined as
\begin{align}
\label{eq:alpha3}
\alpha_3 &= \ds\frac{\mathrm{max}\{\sqrt{c^+},\sqrt{c^-}\}}{\mathrm{min}\{\sqrt{c^+},\sqrt{c^-}\}},\\
\label{eq:alpha4}
\alpha_4 &= {\underset{0 \le \ell \le k-1}{\mathrm{max}}}\{ \alpha_3^\ell\},\\
\label{eq:alpha5}
\alpha_5 &= \ds\frac{\mathrm{max}\{\sqrt{\kappa^+},\sqrt{\kappa^-}\}}{\mathrm{min}\{\sqrt{c^+},\sqrt{c^-}\}},
\end{align}
\end{thm}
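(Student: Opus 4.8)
The plan is to run the classical Crank--Nicolson Galerkin energy argument (cf.\ \cite{thomee2006galerkin}), carried out in the $c$-weighted inner product $(c\,\cdot\,,\cdot)$ and the broken norms $\|\cdot\|_{H^{k,0}(\Omega)}$, mirroring the semi-discrete proof above; implicit in the statement is enough regularity of $u$ in time ($\partial_t^2 u\in H^{2,0}(\Omega)$, $\partial_t^3 u\in L^2(\Omega)$ for a.e.\ $t$). First I would split the error through the elliptic projection $P_hu(t_k)$ from \eqref{eq:aux},
\[
u(t_k)-u_h^k \;=\;\bigl(u(t_k)-P_hu(t_k)\bigr)+\bigl(P_hu(t_k)-u_h^k\bigr)\;=:\;\eta^k+\xi^k .
\]
The part $\eta^k$ is controlled at once by Theorem~\ref{thm:3}: $\|\eta^k\|_{L^2(\Omega)}\le C\rho h^2|u(t_k)|_{H^{2,0}(\Omega)}$; and since $\partial_tP_hu=P_h\partial_tu$ by \eqref{eq:aux2}, also $\|(\eta^k-\eta^{k-1})/\Delta t\|_{L^2(\Omega)}\le \Delta t^{-1}\!\int_{t_{k-1}}^{t_k}\!\|\partial_t\eta(s)\|_{L^2(\Omega)}\,ds\le C\rho h^2\,\Delta t^{-1}\!\int_{t_{k-1}}^{t_k}\!|\partial_tu(s)|_{H^{2,0}(\Omega)}\,ds$; summed over $k$ these give the $\rho h^2|u|$ and $\rho h^2\!\int|\partial_tu|$ contributions to \eqref{eq:result4CN}.

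Next I would derive the evolution equation for $\xi^k$: evaluate the weak form \eqref{eq:problem2} at the midpoint $t_{k-1/2}$, subtract the scheme \eqref{eq:problem3-CN}, substitute $u(t_k)-u_h^k=\eta^k+\xi^k$, and use $\mathcal L(\eta^j,v_h)=0$ for all $v_h\in S_{h,0}^1(\Omega)$ (from \eqref{eq:aux}). This gives, for every $v_h\in S_{h,0}^1(\Omega)$,
\[
\Bigl(c\,\tfrac{\xi^k-\xi^{k-1}}{\Delta t},v_h\Bigr)+\mathcal L\Bigl(\tfrac{\xi^k+\xi^{k-1}}{2},v_h\Bigr)
=\bigl(c\,\tau^k,v_h\bigr)+\mathcal L(\omega^k,v_h)-\Bigl(c\,\tfrac{\eta^k-\eta^{k-1}}{\Delta t},v_h\Bigr),
\]
where $\tau^k:=\tfrac{u(t_k)-u(t_{k-1})}{\Delta t}-\partial_tu(t_{k-1/2})$ and $\omega^k:=\tfrac{u(t_k)+u(t_{k-1})}{2}-u(t_{k-1/2})$ are the Crank--Nicolson consistency errors.

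The core step is to take $v_h=\xi^{k-1/2}:=\tfrac12(\xi^k+\xi^{k-1})$. Then $(c\,\tfrac{\xi^k-\xi^{k-1}}{\Delta t},\xi^{k-1/2})=\tfrac1{2\Delta t}(\|\sqrt c\,\xi^k\|_{L^2(\Omega)}^2-\|\sqrt c\,\xi^{k-1}\|_{L^2(\Omega)}^2)$ and, by nonnegativity of $\mathcal L$, $\mathcal L(\xi^{k-1/2},\xi^{k-1/2})\ge0$, so after Cauchy--Schwarz
\[
\|\sqrt c\,\xi^k\|_{L^2(\Omega)}^2-\|\sqrt c\,\xi^{k-1}\|_{L^2(\Omega)}^2
\;\le\;2\Delta t\Bigl(\|c\,\tau^k\|_{L^2(\Omega)}+\bigl\|c\,\tfrac{\eta^k-\eta^{k-1}}{\Delta t}\bigr\|_{L^2(\Omega)}+R^k\Bigr)\,\|\xi^{k-1/2}\|_{L^2(\Omega)},
\]
where $R^k$ bounds $\mathcal L(\omega^k,\xi^{k-1/2})$. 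For $R^k$ I would integrate by parts separately on $\Omega_1$ and $\Omega_2$; the interface contributions cancel since $[\kappa\,\partial_x\omega^k]=0$ (each $u(t_j)$ satisfies the flux interface condition, and $\omega^k$ is an affine combination of them) and the endpoint terms vanish because $\xi^{k-1/2}\in H_0^{1,0}(\Omega)$, giving $\mathcal L(\omega^k,\xi^{k-1/2})=-(\partial_x(\kappa\,\partial_x\omega^k),\xi^{k-1/2})$ with $\|\partial_x(\kappa\,\partial_x\omega^k)\|_{L^2(\Omega)}\le\max\{\kappa^+,\kappa^-\}\,|\omega^k|_{H^{2,0}(\Omega)}$. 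Taylor's theorem with integral remainder about $t_{k-1/2}$ then yields, after summation, $\sum_k\Delta t\,\|\tau^k\|_{L^2(\Omega)}\le c\,\Delta t^2\!\int_0^T\!\|\partial_t^3u(s)\|_{L^2(\Omega)}\,ds$ and $\sum_k\Delta t\,|\omega^k|_{H^{2,0}(\Omega)}\le c\,\Delta t^2\!\int_0^T\!|\partial_t^2u(s)|_{H^{2,0}(\Omega)}\,ds$.

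Finally, bounding $\|\xi^{k-1/2}\|_{L^2(\Omega)}\le(\min\{\sqrt{c^+},\sqrt{c^-}\})^{-1}\tfrac12(\|\sqrt c\,\xi^k\|_{L^2(\Omega)}+\|\sqrt c\,\xi^{k-1}\|_{L^2(\Omega)})$, dividing the displayed inequality by $\|\sqrt c\,\xi^k\|_{L^2(\Omega)}+\|\sqrt c\,\xi^{k-1}\|_{L^2(\Omega)}$, and summing the telescoping result over $k=1,\dots,m$ for each $m\le M$ gives a bound on $\max_k\|\sqrt c\,\xi^k\|_{L^2(\Omega)}$. The repeated passage between $\|\sqrt c\,\cdot\|$ and $\|\cdot\|$, together with the division of the $\mathcal L(\omega^k,\cdot)$ bound by $\min\{\sqrt{c^+},\sqrt{c^-}\}$, is precisely what produces the amplification constants $\alpha_3$, $\alpha_4=\max_\ell\alpha_3^\ell$ and $\alpha_5$ of \eqref{eq:alpha3}--\eqref{eq:alpha5}; adding the $\eta$-bounds and the initial datum term $\|u(0)-u_h^0\|_{L^2(\Omega)}$ and taking $\max_{0\le k\le M}$ yields \eqref{eq:result4CN}. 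The analytical ingredients (projection splitting, the $\xi^{k-1/2}$ test function, nonnegativity of $\mathcal L$, Taylor consistency) are routine, and the interface enters only benignly through $[\kappa\,\partial_x\omega^k]=0$ and $P_h\partial_t=\partial_tP_h$; the main obstacle is the computational bookkeeping of the weighted-norm constants through the discrete Gronwall/telescoping step so that $\alpha_3,\alpha_4,\alpha_5$ and the explicit numerical factors in \eqref{eq:result4CN} come out exactly as stated, and in routing $\mathcal L(\omega^k,\cdot)$ through the integration by parts so that it contributes $|\partial_t^2u|_{H^{2,0}}$ rather than a weaker seminorm.
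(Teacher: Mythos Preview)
Your plan matches the paper's proof essentially step for step: the $\eta/\xi$ splitting through $P_h$, testing the $\xi$-equation with $\tfrac12(\xi^k+\xi^{k-1})$, dropping the nonnegative $\mathcal L$-term, and estimating the three consistency pieces (time quadrature error, $\eta$-difference quotient, and the $\omega^k$-term) by Taylor's formula and Theorem~\ref{thm:3}.

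Two small presentational differences are worth noting. First, where you integrate $\mathcal L(\omega^k,\xi^{k-1/2})$ by parts on $\Omega_1\cup\Omega_2$ and invoke $[\kappa\,\partial_x\omega^k]=0$ to land on $|\omega^k|_{H^{2,0}(\Omega)}$, the paper instead \emph{defines} a $w_k^3$ by $-(c\,w_k^3,v_h)=\mathcal L(\omega^k,v_h)$ and bounds $\|w_k^3\|_{L^2}$ directly; this is the same computation in disguise and is where $\alpha_5$ enters. Second, the paper does \emph{not} telescope in the $\sqrt{c}$-weighted norm: after dividing by $\|\sqrt c\,\xi_k\|+\|\sqrt c\,\xi_{k-1}\|$ it immediately converts back to the plain $L^2$ norm, obtaining the one-step recursion $\|\xi_k\|\le\alpha_3\bigl(\|\xi_{k-1}\|+\Delta t\,\|w_k^1+w_k^2+w_k^3\|\bigr)$, and then iterates; this is precisely what generates $\alpha_3^k$ and $\alpha_4=\max_\ell\alpha_3^\ell$. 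Your ``summing the telescoping result'' in the $\sqrt c$-norm would in fact avoid the exponential factor and give a sharper constant, so if you want the stated \eqref{eq:result4CN} with those particular $\alpha$'s you should follow the paper's convert-then-iterate route rather than telescope.
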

\begin{proof}
We split the error into two parts
\begin{equation}
u(t_k)-u_h^k = (u(t_k)-P_hu(t_k))+(P_hu(t_k)-u_h^k) = \eta_k+\xi_k, \forall \ k=1,2,\ldots, M.
\end{equation}
We insert $\xi_k$ into the fully discrete variational formulation \eqref{eq:problem3-CN}, add and subtract the term $c\frac{u(t_k)-u(t_{k-1})}{\Delta t}$, use $\mathcal{L}(P_hu(t_k),v_h) = \mathcal{L}(u(t_k),v_h)$ and the identity \eqref{eq:problem3-CN} to get
\begin{equation}
\label{eq:discrete2CN}
\begin{split}
\left(c\ds\frac{\xi_k-\xi_{k-1}}{\Delta t},v_h\right)+\mathcal{L}(\ds\frac{\xi_k+\xi_{k-1}}{2},v_h)= -(c(w^1_k+w^2_k+w^3_k),v_h), \forall v_h \in S_{h,0}^1(\Omega),
\end{split}
\end{equation}
where
\begin{align}
w_k^1 &= \left[\left(\pdd{t}{u}(t_{k-\frac{1}{2}}) - \ds\frac{u(t_k)-u(t_{k-1})}{\Delta t}\right)\right],\\
w_k^2 &= \left[\left(\ds\frac{u(t_k)-u(t_{k-1})}{\Delta t}-\ds\frac{P_hu(t_k)-P_hu(t_{k-1})}{\Delta t}\right)\right],
\end{align}
and $w_k^3$ is defined through the auxilliary problem
\begin{equation}
-(cw_k^3,v_h) = \mathcal{L}(\ds\frac{u(t_k)+u(t_{k-1})}{2}-u(t_{k-\frac{1}{2}}),v_h), \forall \ v_h \in S_{h,0}^1(\Omega).
\end{equation}

Choose $v_h = \frac{(\xi_k+\xi_{k-1})}{2}$ in \eqref{eq:discrete2CN} to get
\begin{equation}
\begin{split}
||\sqrt{c}\xi_k||_{L^2(\Omega)}^2 -||\sqrt{c}\xi_{k-1}||_{L^2(\Omega)}^2 &+\Delta t |\sqrt{\kappa}(\xi_k+\xi_{k-1})|_{H^{1,0}(\Omega)}^2 \\
\leq \Delta t \left(||\sqrt{c}\xi_k||_{L^2(\Omega)}\right.&+\left.||\sqrt{c}\xi_{k-1})||_{L^2(\Omega)}\right)||\sqrt{c}(w^1_k+w^2_k+w^3_k)||_{L^2(\Omega)}.
\end{split}
\end{equation}
By dropping the nonnegative term $\Delta t |\sqrt{\kappa}(\xi_k+\xi_{k-1})|_{H^{1,0}(\Omega)}^2$ and dividing by $(||\sqrt{c}\xi_k||_{L^2(\Omega)}+||\sqrt{c}\xi_{k-1}||_{L^2(\Omega)})$ we have
\begin{equation}
\label{eq:discrete1CN}
||\xi_k||_{L^2(\Omega)} \leq \ds\frac{\mathrm{max}\{\sqrt{c^+},\sqrt{c^-}\}}{\mathrm{min}\{\sqrt{c^+},\sqrt{c^-}\}}\left[||\xi_{k-1}||_{L^2(\Omega)}+\Delta t ||w^1_k+w^2_k+w^3_k||_{L^2(\Omega)}\right].
\end{equation}
Applying \eqref{eq:discrete1CN} recursively, using the definition \eqref{eq:alpha3} of $\alpha_3$ and using the definition of $\alpha_4$ from \eqref{eq:alpha4} gives us the discrete stability estimate
\begin{equation}
\label{eq:discretestabilityCN}
||\xi_k||_{L^2(\Omega)} \leq \alpha_3^k||\xi_0||_{L^2(\Omega)}+\Delta t \ \alpha_4\left[\sum_{\ell = 1}^{k}\left(||w^1_{\ell}||_{L^2(\Omega)}+||w_\ell^2||_{L^2(\Omega)}+||w_\ell^3||_{L^2(\Omega)}\right)\right].
\end{equation}
From Taylor's formula we can show that
\begin{equation}
\Delta t\sum_{\ell = 1}^{k}||w_\ell^1||_{L^2(\Omega)} \leq \frac{\Delta t^2}{8} \int_0^{t_k}||\frac{\partial^3u}{\partial t^3}(s)||_{L^2(\Omega)}ds.
\end{equation}

Next, we have
\begin{equation}
w_\ell^2 = \left(\ds\frac{u(t_\ell)-P_hu(t_\ell)}{\Delta t}\right)-\left(\ds\frac{u(t_{\ell-1})-P_hu(t_{\ell-1})}{\Delta t}\right)
=\ds\frac{1}{\Delta t}\int_{t_{\ell-1}}^{t_\ell}\pdd{t}{(u-P_hu)}(s)ds.
\end{equation}
From Theorem \ref{thm:3} we have
\begin{equation}
\Delta t \sum_{\ell = 1}^{k}||w_\ell^2||_{L^2(\Omega)} \leq \sum_{\ell = 1}^k\int_{t_{\ell-1}}^{t_\ell} C\rho h^2|\pdd{t}{u}(s)|_{H^{2,0}(\Omega)}ds\leq C\rho h^2\int_0^{t_k}|\pdd{t}{u}(s)|_{H^{2,0}(\Omega)}ds.
\end{equation}

Finally we have
\begin{equation}
||w_k^3||_{L^2(\Omega)} \leq \alpha_5||\frac{\partial^2}{\partial x^2}\left(\ds\frac{u(t_k)+u(t_{k-1})}{2}-u(t_{k-\frac{1}{2}})\right)||_{L^2(\Omega)}
\end{equation}
and thus
\begin{equation}
\Delta t \sum_{\ell = 1}^{k}||w_\ell^3||_{L^2(\Omega)} \leq \alpha_5\frac{\Delta t^2}{2} \int_0^{t_k}|\frac{\partial^2u}{\partial t^2}(s)|_{H^{2,0}(\Omega)}ds.
\end{equation}

Thus, $\forall \ k = 1,2,\ldots, M$ we have
\begin{equation}
\label{eq:boundsonxiCN}
\begin{split}
||\xi_k||_{L^2(\Omega)} &\leq
\alpha_3^k||u(0)-u_h^0||_{L^2(\Omega)}+C\rho h^2\left(\alpha_3^k|u(0)|_{H^{2,0}(\Omega)}+\alpha_4\int_0^{t_k}|\pdd{t}{u}(s)|_{H^{2,0}(\Omega)}ds\right)\\
&+\alpha_4\frac{\Delta
t^2}{2}\left(\frac{1}{4}\int_0^{t_k}||\frac{\partial^3u}{\partial t^3}(s)||_{L^2(\Omega)}ds+\alpha_5 \int_0^{t_k}|\frac{\partial^2u}{\partial t^2}(s)|_{H^{2,0}(\Omega)}ds\right).
\end{split}
\end{equation}
Combining the bounds \eqref{eq:boundsonxiCN} on $\xi_k$ with the bounds on $\eta_k$ from \eqref{eq:boundsoneta} with $t=t_k$ we finally obtain the result \eqref{eq:result4CN} of the theorem.
\end{proof}

\section{Numerical Examples of the Deterministic Methods}
\label{sec:numd}

Consider the initial profile given by\gibson{}
\begin{equation}\label{eq:example}
u_0(x)=\begin{cases}
(1-x^2)^5&\mbox{if }|x|<1\\
0&\mbox{else}.
\end{cases}
\end{equation}
In this section simulations are provided of the solution to
\eqref{eq:diff} with \eqref{eq:example} for values of
$D^+=\{10,100\}$ while holding $D^-=1$. We consider scenarios with
$\lambda=\{\lambda^*, \lambda^\#\}$. Using the IFEM-CN method, the
expected value solution formula from \cite{Appuhamilage_AAP}, and the
SDE-Euler-Maruyama method discussed in Section \ref{sec:Stochastic},
all computed at $t=0.2$, are shown in Figure \ref{fig:solution}. As
the simulations using IFEM-BE are indistinguishable, only the
IFEM-CN are displayed in these plots.

In each of the above cases, the error is computed between the numerical approximation and
the expected value solution formula on the interval $[-5, 5]$.  (Note that in the case of IFEM, the solution was computed on a larger interval in order to avoid contamination from boundary effects.)  The
two-norm of the error (in space, infinity-norm in time) is plotted versus the spatial step $h$ on a log-log plot
demonstrating second order (spatial) accuracy.  In the case of Backward Euler (Figure \ref{fig:IFEMBEerror}), the time step was chosen to be $O(h^2)$, whereas for Crank-Nicolson (Figure \ref{fig:IFEMCNerror}) $\Delta t=O(h)$.

\begin{figure}[!t]
  \begin{center}\hspace*{-12pt}
    \begin{tabular}{c}
      \mbox{\includegraphics[width = 0.5\textwidth, angle=0]{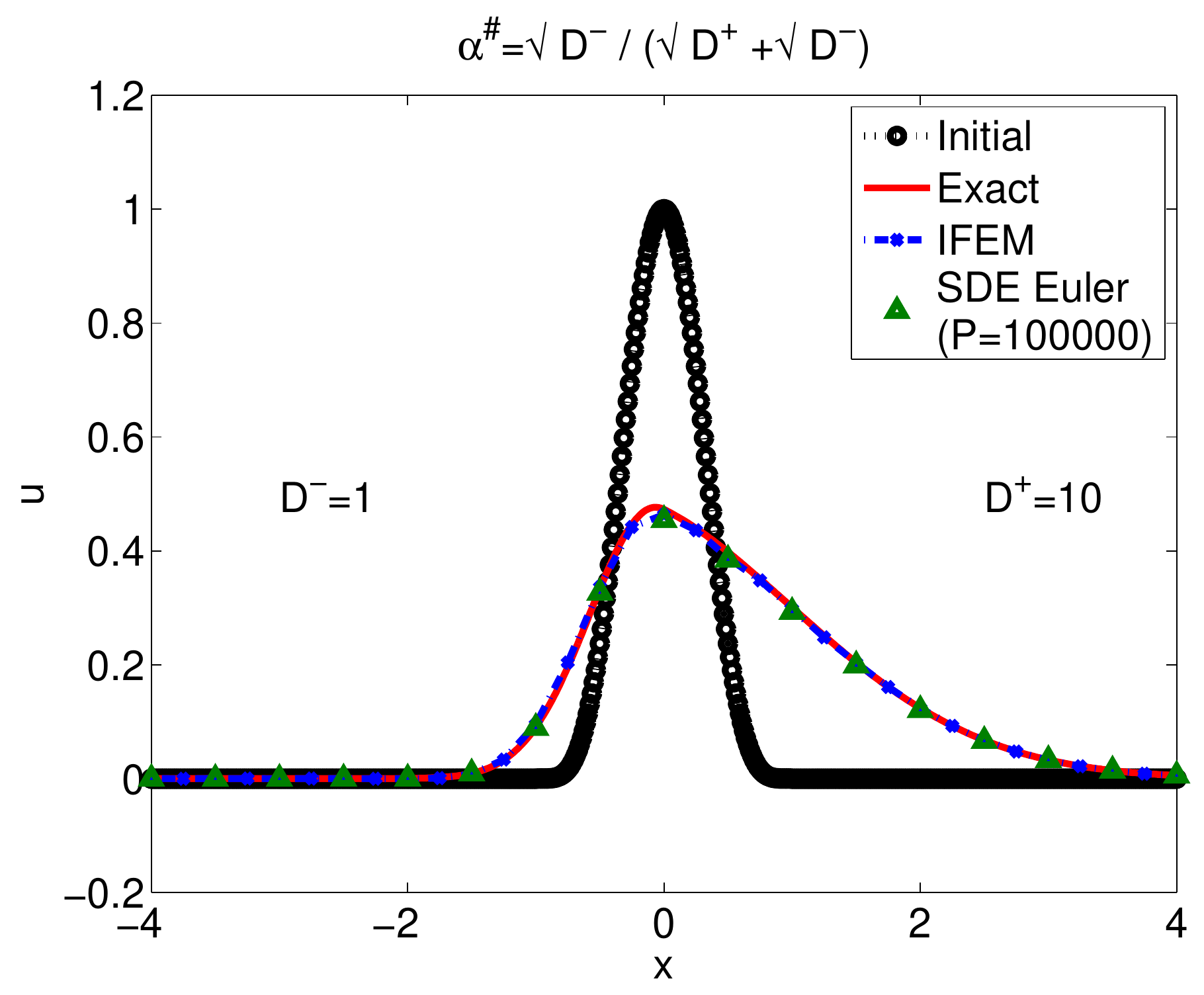}} \
      \mbox{\includegraphics[width = 0.5\textwidth, angle=0]{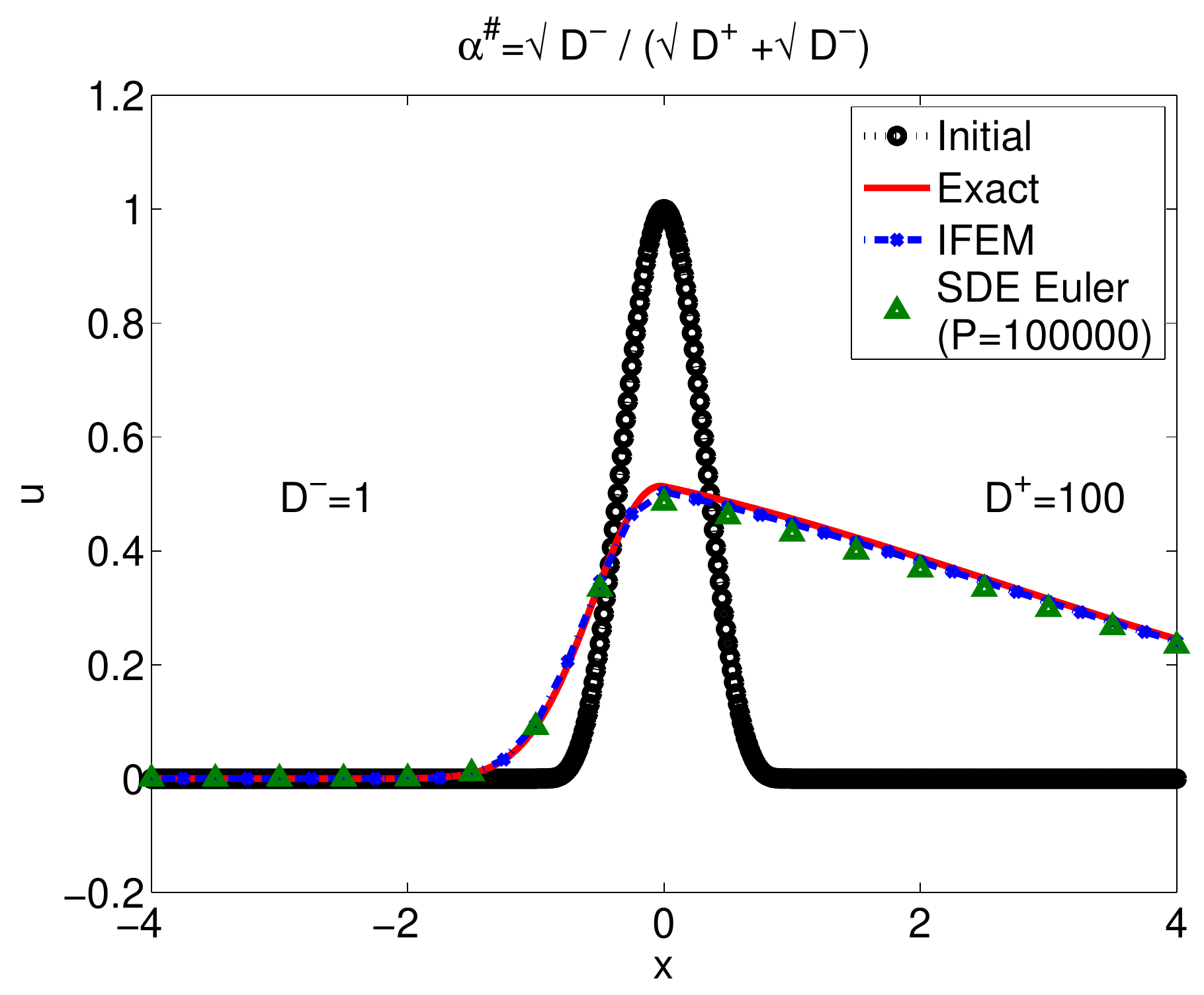}} \\
      \mbox{\includegraphics[width = 0.5\textwidth, angle=0]{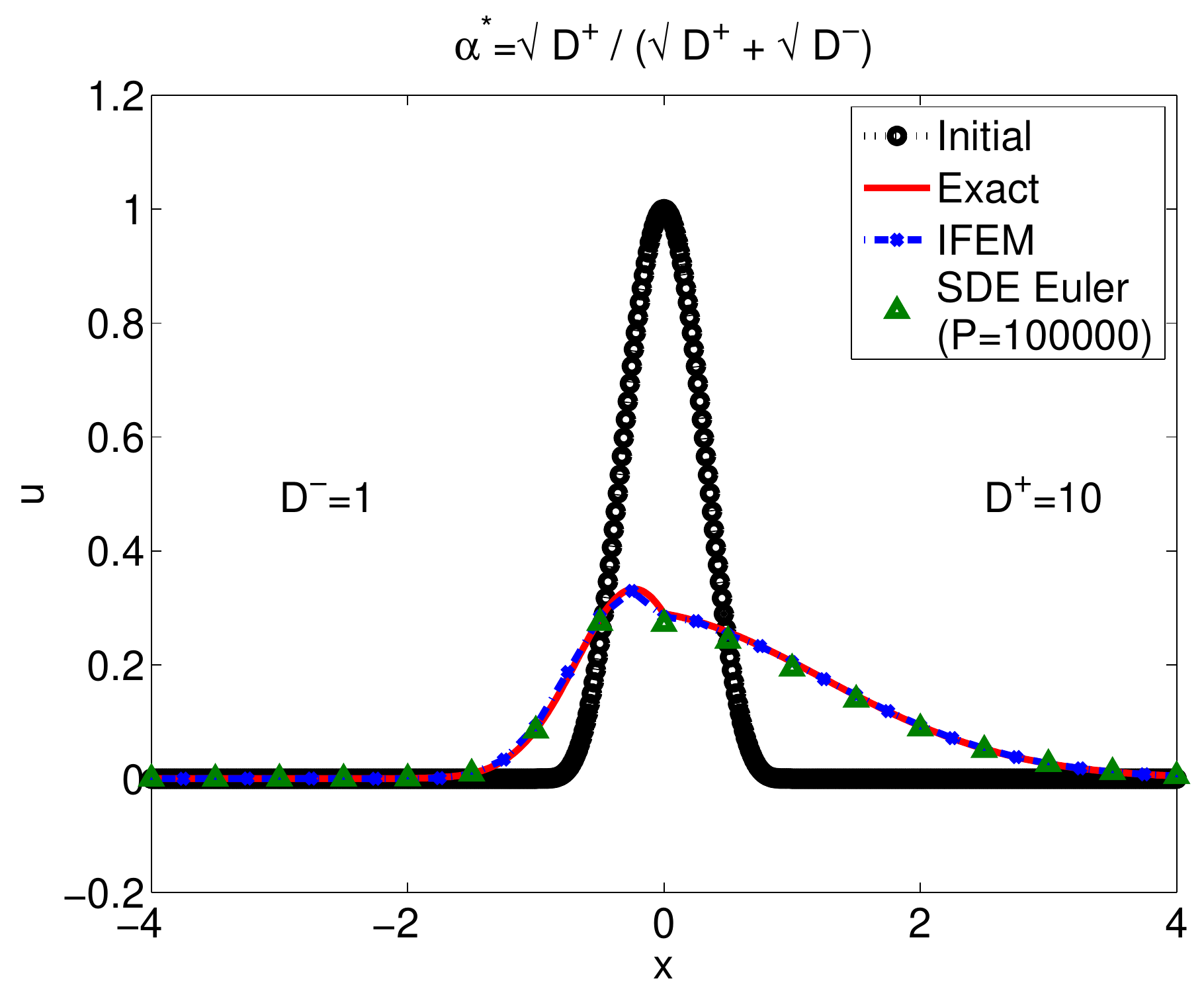}} \
      \mbox{\includegraphics[width = 0.5\textwidth, angle=0]{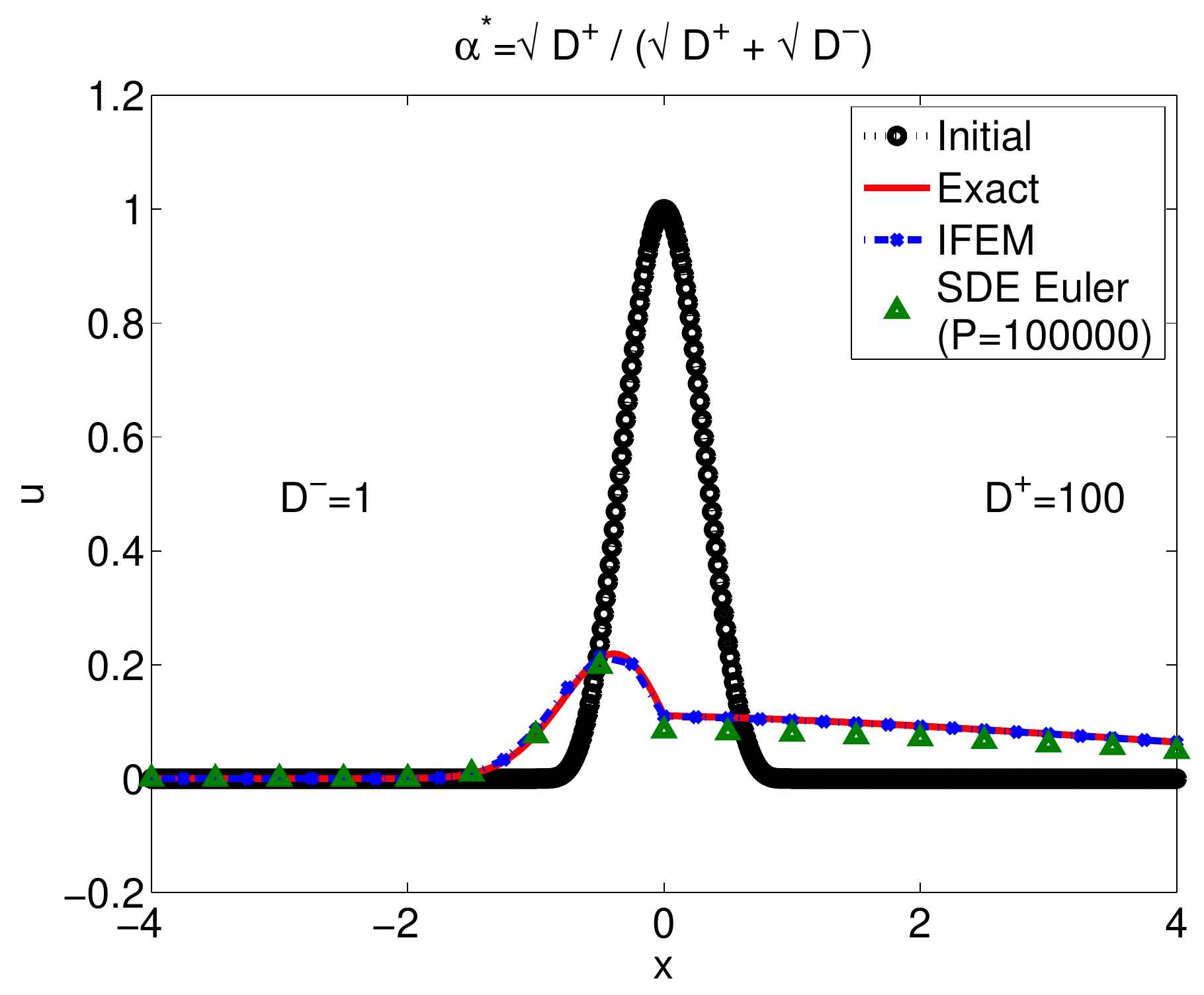}} \\
      \end{tabular}
  \end{center}\caption{Initial and computed solution of \eqref{eq:example} at $t=0.2$ using IFEM-CN method, the SDE-Euler method and the expected value solution formula from \cite{Appuhamilage_AAP}, for various combinations of $D^+$ and $\lambda$ values. }\label{fig:solution}
\end{figure}

\begin{figure}[!t]
  \begin{center}
    \begin{tabular}{c}
      \mbox{\includegraphics[width = 0.5\textwidth, angle=0]{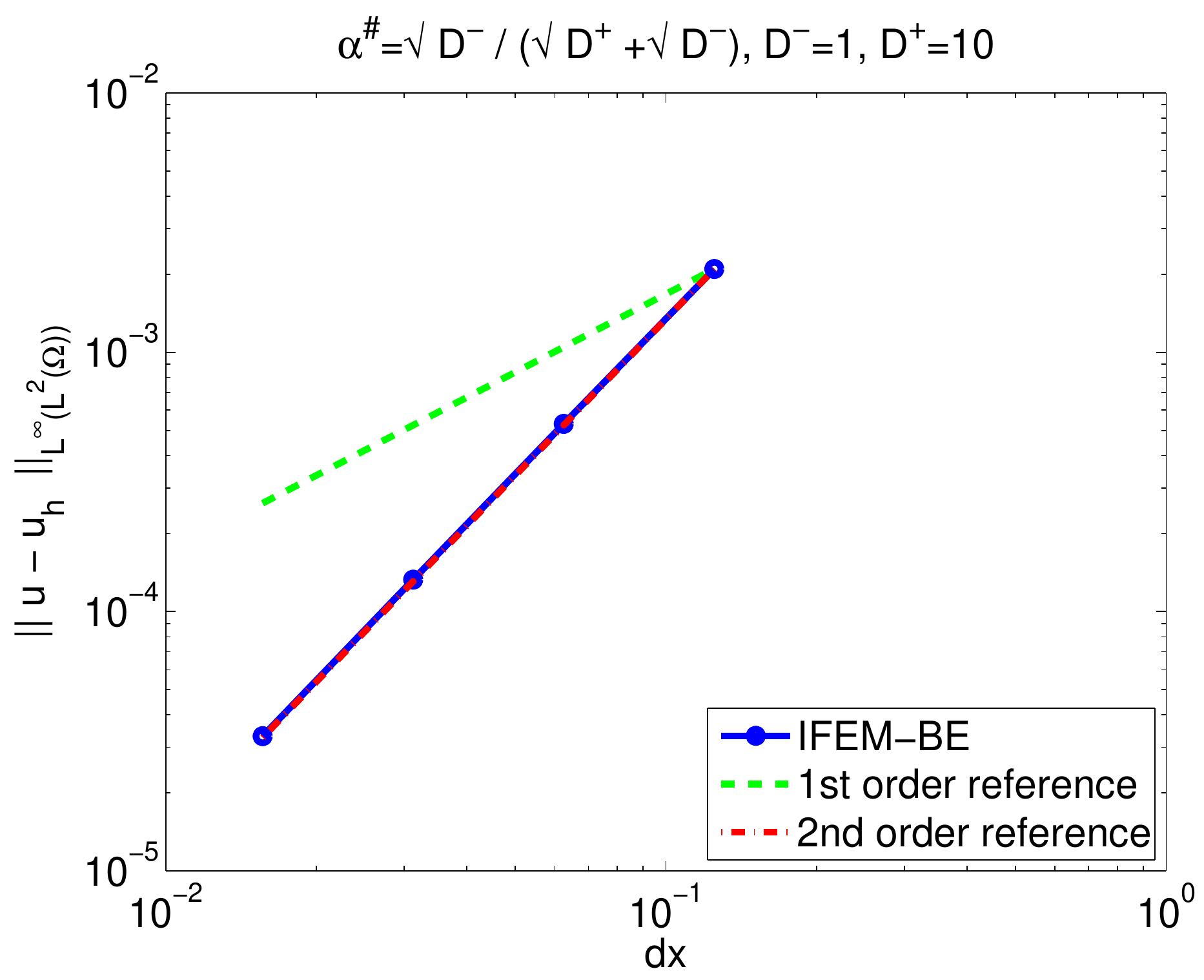}} \
      \mbox{\includegraphics[width = 0.5\textwidth, angle=0]{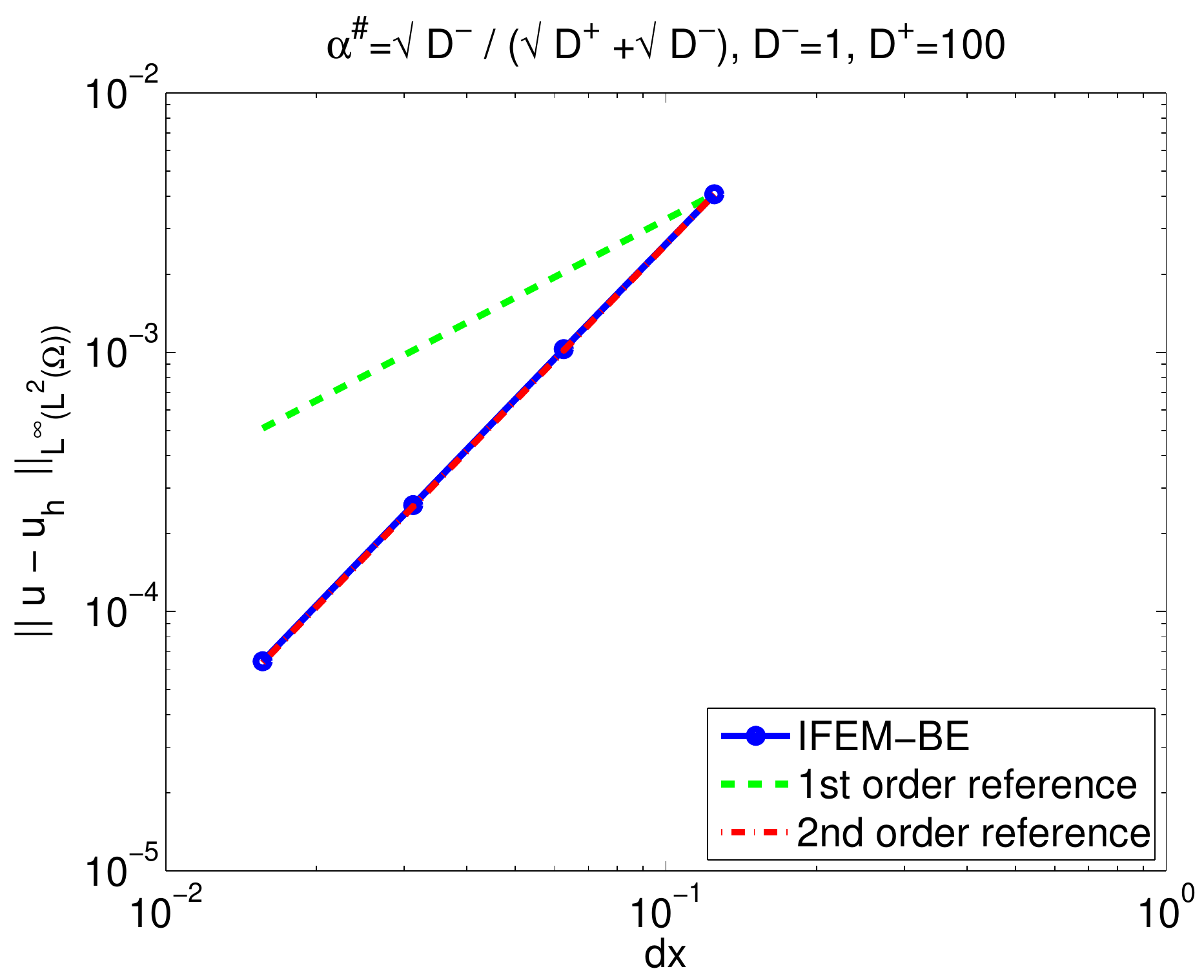}} \\
      \mbox{\includegraphics[width = 0.5\textwidth, angle=0]{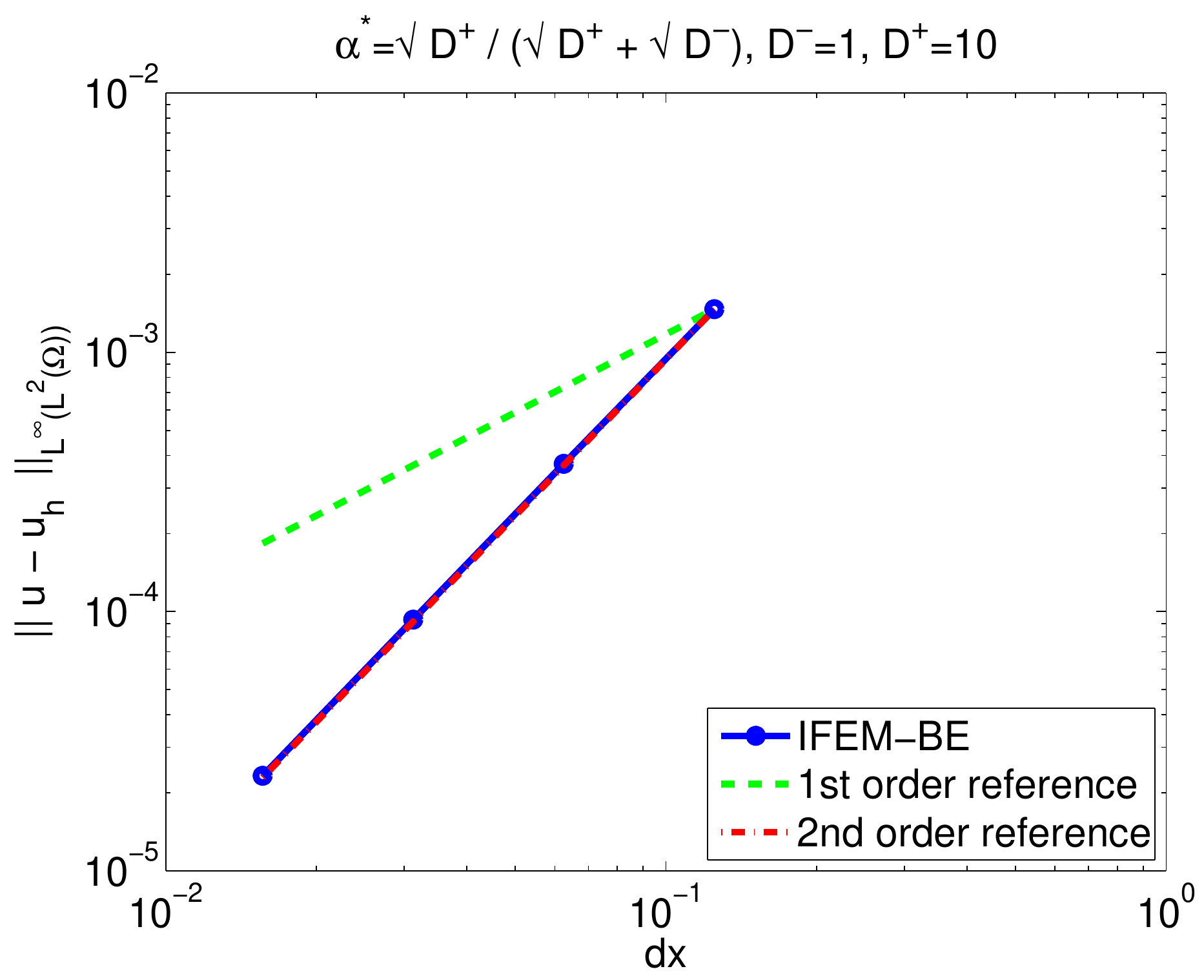}} \
      \mbox{\includegraphics[width = 0.5\textwidth, angle=0]{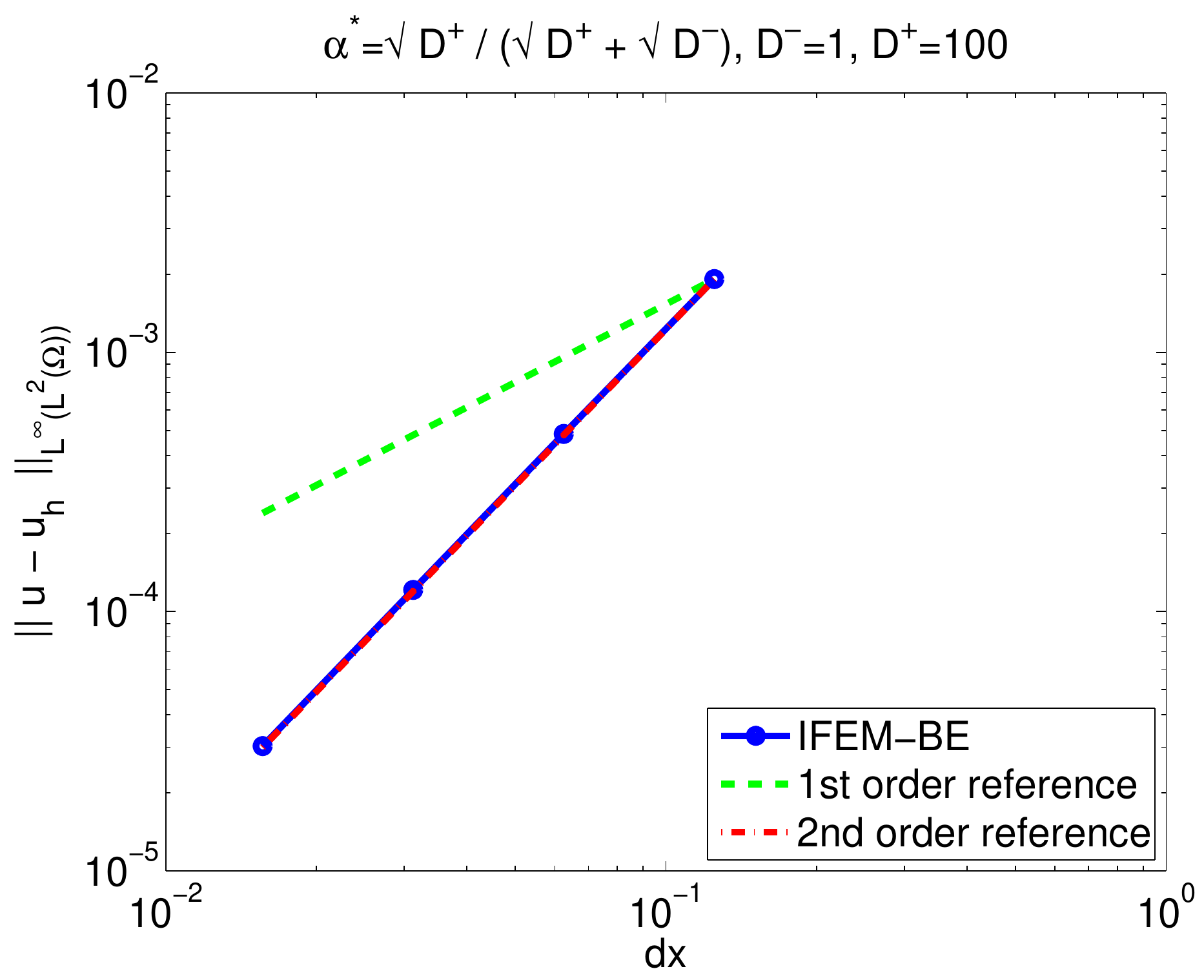}} \\
      \end{tabular}
  \end{center}\caption{Error for the IFEM-BE method demonstrating second order convergence for the above scenarios.}\label{fig:IFEMBEerror}
\end{figure}

\begin{figure}[!t]
  \begin{center}
    \begin{tabular}{c}
      \mbox{\includegraphics[width = 0.5\textwidth,angle=0]{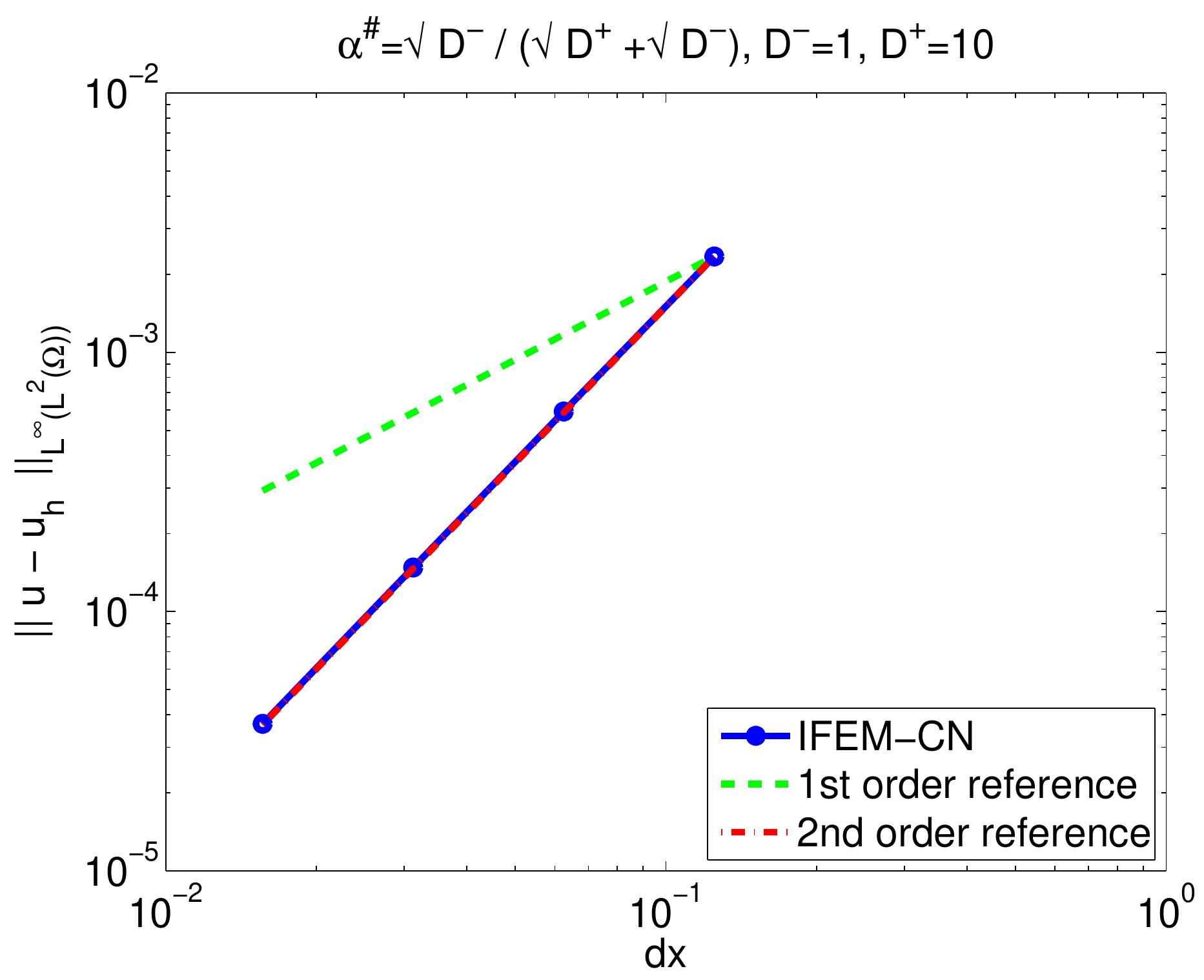}} \
      \mbox{\includegraphics[width = 0.5\textwidth,angle=0]{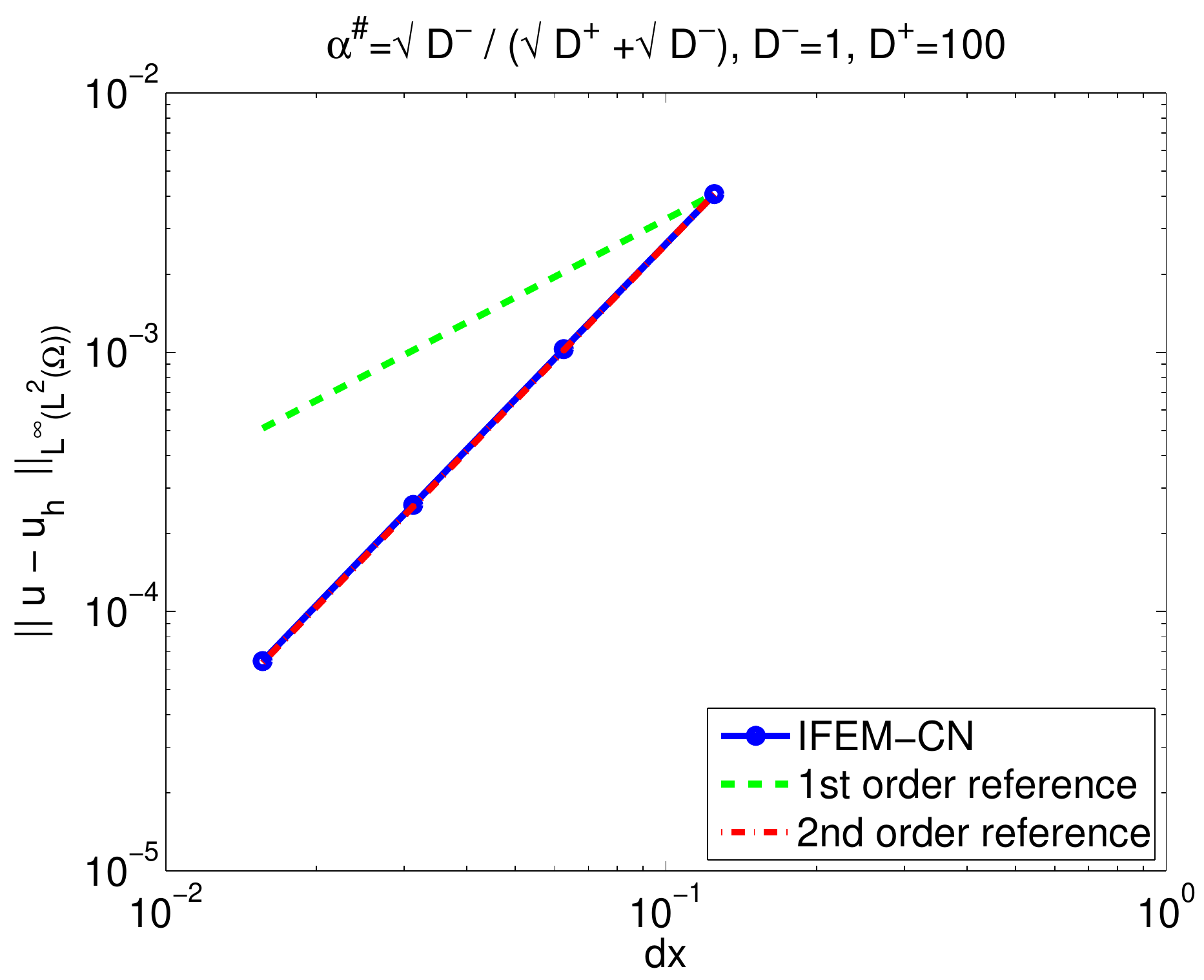}} \\
      \mbox{\includegraphics[width = 0.5\textwidth,angle=0]{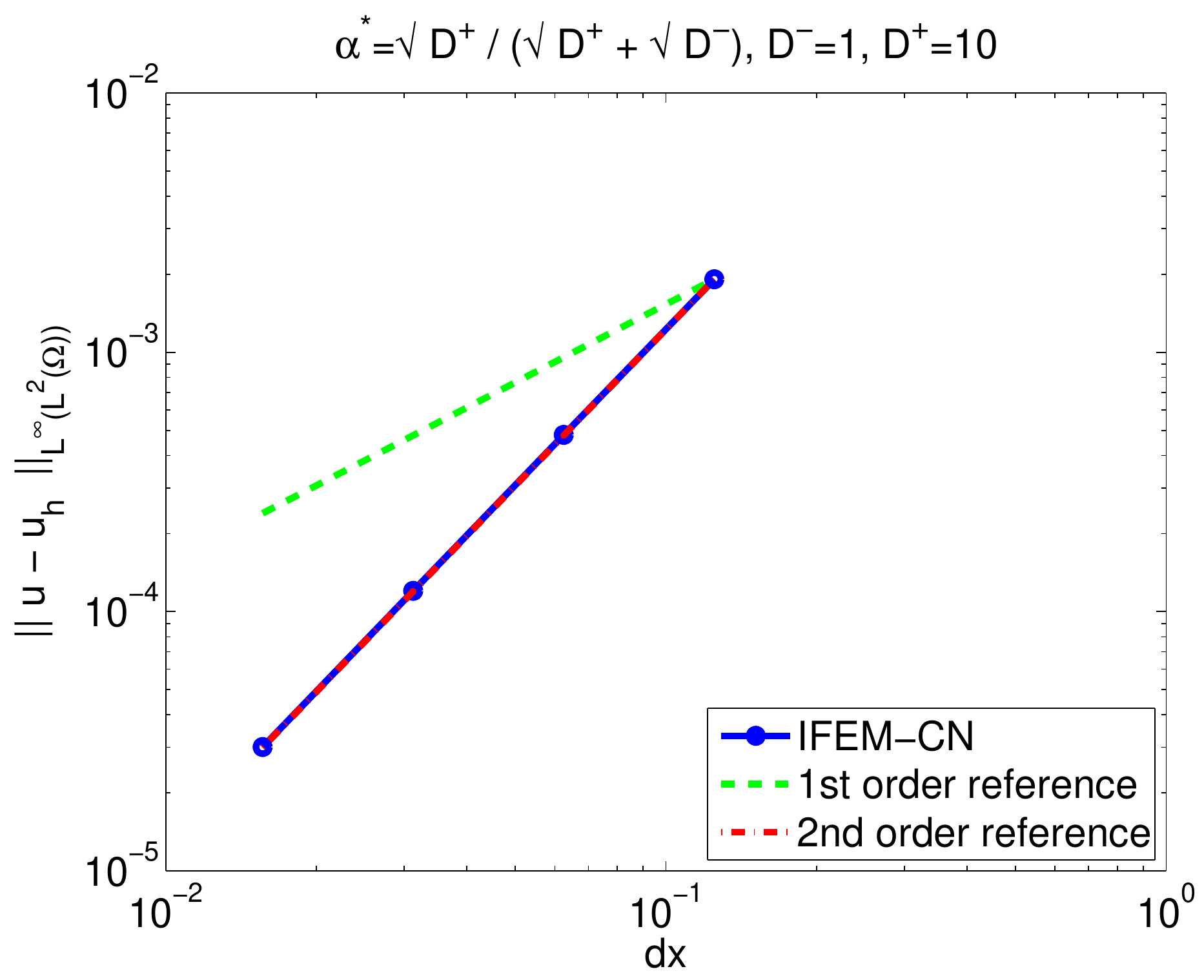}} \
      \mbox{\includegraphics[width = 0.5\textwidth,angle=0]{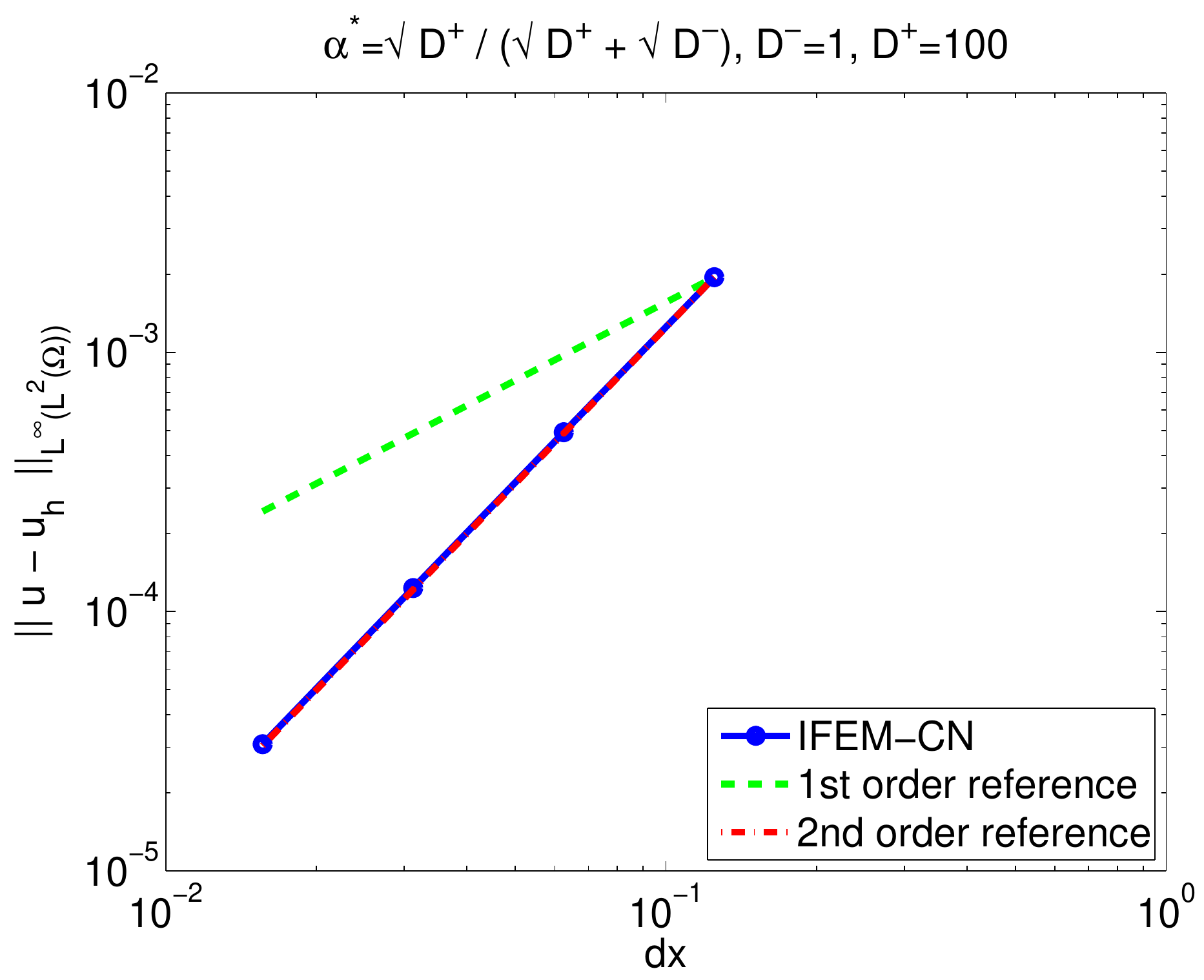}} \\
    \end{tabular}
  \end{center}\caption{Error for the IFEM-CN method demonstrating second order convergence for the above scenarios.}\label{fig:IFEMCNerror}
\end{figure}

\section{Numerical Methods for Stochastic Diffusion in the Presence of an Interface}
\label{sec:Stochastic}

In this section we will consider a numerical solution to system (2.1) using
a Monte-Carlo method.  The discontinuities in the coefficient of the equation,
as well as the generality of the interface condition considered in this paper present challenges in two
different aspects of the theory.  On the one hand, the discontinuity in the diffusion coefficient naturally requires to consider SDE's that include a local time term (see section 2.1 for details.)  As noted in \cite{MartinezT12}, a transformation
of the stochastic process can be defined so that this local time term is eliminated.  On the other hand, the generality of the interface condition renders inadequate the approach of \cite{MartinezT12} since they benefited from the self adjoint
property of the problem under their consideration.  Instead, in the problem
consider in this paper, a careful quantification of the effect of the interface
condition is needed.

The organization of this section is as follows.  In section 5.1 we review basic
aspects of Skew Brownian motion, review details of the stochastic
representation of solutions of (2.1) obtained in \cite{appuhamillage2012interfacial},
and obtain basic estimates on the corresponding transition probability densities.
In section 5.2 we follow a similar approach as the one developed in \cite{MartinezT12} to
eliminate the local time term in the SDE associated to solutions of (2.1),
an introduce an Euler-Maruyama method to approximate solutions of the
resulting SDE.  The main theorems establishing the rate of convergence of the
approximation are stated in this section, with proofs given in section 5.3.

\subsection{Stochastic Representation of the Solution to \eqref{eq:diff}.}
Let us first record a definition of skew Brownian motion
$B^{(\alpha)}(t)$, $0<\alpha<1$, originally  introduced by It\^o and
McKean. Let $|B(t)|$ denote the reflecting Brownian motion starting
at $0$, and enumerate the excursion intervals away from $0$ by $J_1,
J_2,...$. Let $A_1,A_2,...$ be an i.i.d. sequence of Bernoulli
$\pm1$ random variables, independent of $B(t)$, with
$P(A_n=1)=\alpha$. Then $B^{(\alpha)}(t)$ is defined by changing the
signs of the excursion over the intervals $J_n$ whenever to
$A_n=-1$, for $n=1,2,...$. That is
\begin{equation}\label{skewBM defn}
B^{(\alpha)}(t)=\sum_{n=1}^\infty A_n\mathbf{1}_{J_n}(t)|B(t)|,\quad
t\ge0.
\end{equation}

 Denote
 $\sigma(x)=\sqrt{D^+}x\mathbf{1}_{[0,\infty)}(x)+\sqrt{D^-}x\mathbf{1}_{(-\infty,0)}(x)$
 and
 \begin{equation}\label{Y^alpha-defn}
Y^{(\alpha)}(t)=\sigma\big(B^{(\alpha)}(t)\big),\quad(t\ge0).
 \end{equation}
It follows from \cite[Theorem 3.1]{appuhamillage2012interfacial} that if $f\in
C^2(\mathbb{R}\setminus\{0\})\cap C(\mathbb{R})$ satisfying the
condition $\lambda f'(0^+)=(1-\lambda)f'(0^-)$ then, for
\begin{equation}\label{alpha}
\alpha=\alpha(\lambda)={\lambda\sqrt{D^-}\over
\lambda\sqrt{D^-}+(1-\lambda)\sqrt{D^+}}
\end{equation}
we have
\begin{equation}\label{martingale-prob}
f\big(Y^{(\alpha)}(t)\big)=f\big(Y^{(\alpha)}(0)\big)+\int_0^tf'_-\big(Y^{(\alpha)}(s)\big)\sqrt{D\big(Y^{(\alpha)}(s)\big)}dB(s)
+{1\over2}\int_0^tD\big(Y^{(\alpha)}(s)\big)f''\big(Y^{(\alpha)}(s)\big)ds.
\end{equation}
In addition, $Y^{(\alpha)}(t)$ satisfies the following stochastic
differential equation with a local time
\begin{equation}\label{Ito-eqn}
dY^{(\alpha)}(t)=\sqrt{D\big(Y^{(\alpha)}(t)\big)}dB(t)
+\Big({\sqrt{D^+}-\sqrt{D^-}\over2}+\sqrt{D^-}{2\alpha-1\over2\alpha}\Big)dl_t^{B^{(\alpha)},+}(0)
\end{equation}
where $D(x)$ is defined as in \eqref{D(x)} and the local time $l_t^{B^{(\alpha)},+}(0)$ is defined by
$$
l_t^{B^{(\alpha)},+}(0)=\lim_{\epsilon\downarrow0}{1\over\epsilon}\int_0^t\mathbf{1}_{[0,\epsilon)}(B^{(\alpha)}(s))d\langle
B^{(\alpha)}\rangle_s.
$$

For each $g\in C^2_b(\mathbb{R}\setminus\{0\})$ and $x\ne0$ we
denote the operator
\begin{equation}\label{operator L}
\tilde{ \mathcal{L}}g(x)={D(x)\over2}g''(x).
\end{equation}
In addition, denote
\begin{align}
\mathcal{W}^2&=\{g\in C^2_b(\mathbb{R}\setminus\{0\}): g^{(i)}\in L^1(\mathbb{R})\cap L^2(\mathbb{R}), i=1,2; \lambda g'(0^+)=(1-\lambda)g'(0^-)\},\label{W2 defn}\\
\mathcal{W}^4&=\{g\in C^4_b(\mathbb{R}\setminus\{0\}): g^{(i)}\in
L^1(\mathbb{R})\cap L^2(\mathbb{R}), i=1, ..., 4; \lambda
g'(0^+)=(1-\lambda)g'(0^-),\notag\\
&\hspace{7.87cm}\lambda
(\tilde{\mathcal{L}}g)'(0^+)=(1-\lambda)(\tilde{\mathcal{L}}g)'(0^-)\}.\label{W4
defn}
\end{align}

Now we are in a position to state the stochastic representation
theorem which can be found in \cite{appuhamillage2012interfacial} (see also
\cite{MartinezT12})
\begin{thm}[Corollary 3.2 from \cite{appuhamillage2012interfacial}]\label{thm:stoch-representation} Let
$0<\lambda<1$, $\alpha=\alpha(\lambda)$ as in \eqref{alpha} and $u_0\in \mathcal{W}^2$. Then the function
$u(t,x)=E^xu_0(Y^{(\alpha)}(t))$, where
$(t,x)\in[0,T]\times\mathbb{R}$, is the unique function in
$C^{1,2}_b([0,T]\times(\mathbb{R}\setminus\{0\}))\cap
C([0,T]\times\mathbb{R})$ which satisfies the equations
\eqref{eq:diff}.
\end{thm}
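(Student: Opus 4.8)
The plan is to prove existence and uniqueness separately, both times exploiting the It\^o--Dynkin identity \eqref{martingale-prob}: its hypotheses on $f$ (namely $f\in C^2(\mathbb{R}\setminus\{0\})\cap C(\mathbb{R})$ with $\lambda f'(0^+)=(1-\lambda)f'(0^-)$) encode exactly the interface conditions \eqref{eq:diff_sol}--\eqref{eq:diff_lambda}, and this is precisely why $\alpha$ has been calibrated to $\lambda$ through \eqref{alpha}, so that the would-be local-time-at-$0$ term in It\^o's formula drops out.

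\emph{Existence.} Put $u(t,x)=E^xu_0(Y^{(\alpha)}(t))=:P_tu_0(x)$. Since $\sigma$ is a strictly increasing homeomorphism of $\mathbb{R}$ and $B^{(\alpha)}$ is strong Markov, $Y^{(\alpha)}$ is strong Markov, so $(P_t)_{t\ge0}$ is a semigroup with $\|P_tu_0\|_\infty\le\|u_0\|_\infty$. Regularity off the interface is read off from the transition density $p(t,x,y)$ of $Y^{(\alpha)}$ (the skew-Brownian density pushed forward through $\sigma$): the Gaussian-type bounds on $p,\partial_tp,\partial_xp,\partial_{xx}p$ established in Section 5.1, combined with $u_0\in C^2_b$ and the decay in $\mathcal{W}^2$ ($u_0',u_0''\in L^1(\mathbb{R})$ in particular), justify differentiating $u(t,x)=\int_{\mathbb{R}}p(t,x,y)u_0(y)\,dy$ under the integral for $x\ne0$, giving $u\in C^{1,2}_b([0,T]\times(\mathbb{R}\setminus\{0\}))$. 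To produce the equation itself, apply \eqref{martingale-prob} with $f=u_0\in\mathcal{W}^2$ and take $E^x$: the stochastic integral is a true martingale because its integrand is bounded ($u_0'$ and $D$ are), so $u(t,x)=u_0(x)+\int_0^tP_s(\tilde{\mathcal{L}}u_0)(x)\,ds$; since $\tilde{\mathcal{L}}u_0=(D/2)u_0''$ is bounded and continuous off $0$, $s\mapsto P_s(\tilde{\mathcal{L}}u_0)(x)$ is continuous on $[0,T]$ for $x\ne0$, and differentiating in $t$ while using the semigroup/generator property (the generator of $(P_t)$ acts as $\tilde{\mathcal{L}}$ on $C^2(\mathbb{R}\setminus\{0\})$ functions satisfying the flux condition, i.e.\ on $\mathcal{W}^2$) yields $\partial_tu(t,x)=\tilde{\mathcal{L}}u(t,x)=\partial_x\big((D/2)\partial_xu\big)(t,x)$ for $x\ne0$, which is \eqref{eq:diff_pde}. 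The continuity and flux conditions \eqref{eq:diff_sol}--\eqref{eq:diff_lambda} are inherited from the corresponding properties of $x\mapsto p(t,x,y)$ (equivalently, they are what forces the local-time term in \eqref{martingale-prob} to vanish), and $u\in C([0,T]\times\mathbb{R})$ with $u(0,\cdot)=u_0$ follows by dominated convergence from $Y^{(\alpha)}(t)\to x$ a.s.\ as $t\downarrow0$ and $u_0$ bounded continuous, which is \eqref{eq:diff_init}.

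\emph{Uniqueness.} Let $v\in C^{1,2}_b([0,T]\times(\mathbb{R}\setminus\{0\}))\cap C([0,T]\times\mathbb{R})$ solve \eqref{eq:diff} with $v(0,\cdot)=u_0$. Fix $(t,x)$ and run the computation behind \eqref{martingale-prob} on the time-dependent composition $s\mapsto v(t-s,Y^{(\alpha)}(s))$ over $[0,t]$: for each $s$ the function $v(t-s,\cdot)$ lies in $C^2(\mathbb{R}\setminus\{0\})\cap C(\mathbb{R})$ and satisfies $\lambda\partial_xv(t-s,0^+)=(1-\lambda)\partial_xv(t-s,0^-)$, so the It\^o--Tanaka local-time term and the drift local-time term in \eqref{Ito-eqn} cancel, leaving the finite-variation part equal to $\int_0^s\big(-\partial_tv+\tilde{\mathcal{L}}v\big)(t-r,Y^{(\alpha)}(r))\,dr$, which vanishes by \eqref{eq:diff_pde} since $Y^{(\alpha)}(r)\ne0$ for a.e.\ $r$. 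Boundedness of $v$ and its derivatives makes the remaining stochastic integral a true martingale; taking $E^x$ and letting $s\uparrow t$ gives $v(t,x)=E^x[v(0,Y^{(\alpha)}(t))]=E^xu_0(Y^{(\alpha)}(t))=u(t,x)$.

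\emph{Main obstacle.} The crux throughout is the stochastic calculus at the interface: one must legitimately apply \eqref{martingale-prob}, and its time-dependent analogue, to functions that are $C^2$ only off $\{0\}$ and merely continuous across it, and verify that the local-time-at-$0$ contributions coming from the It\^o--Tanaka correction and from the drift in \eqref{Ito-eqn} cancel exactly when $\lambda f'(0^+)=(1-\lambda)f'(0^-)$ and $\alpha=\alpha(\lambda)$. This in turn relies on $Y^{(\alpha)}$ spending zero Lebesgue time at $0$ (so that $D(Y^{(\alpha)}(s))$ is a.e.\ well defined) and on the explicit occupation-time and local-time identities for skew Brownian motion that underlie \eqref{martingale-prob}. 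A secondary technical nuisance is justifying differentiation under the integral sign up to (but not at) the interface, which is precisely what the $L^1\cap L^2$ decay assumptions folded into the definition of $\mathcal{W}^2$ are there to guarantee.
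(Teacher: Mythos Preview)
The paper does not actually prove this theorem: it is stated as Corollary~3.2 from \cite{appuhamillage2012interfacial} (with a pointer also to \cite{MartinezT12}) and is used as a black box. The subsection on proofs explicitly lists only Theorems~\ref{thm:derv-est}, \ref{thm:num-r-est}, and \ref{thm:num-r-est-2} as being proved here. So there is no ``paper's own proof'' to compare against.

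That said, your Feynman--Kac argument is the standard route and is essentially correct in outline. The paper does furnish the concrete ingredients you invoke: the It\^o--Dynkin identity \eqref{martingale-prob} (with the local-time cancellation built into the choice $\alpha=\alpha(\lambda)$), the SDE \eqref{Ito-eqn}, and the explicit transition density \eqref{skewBM density}--\eqref{Y^alpha-density}. From the latter one can verify directly that $x\mapsto q^{(\alpha)}(t,x,y)$ satisfies $\lambda\,\partial_xq^{(\alpha)}(t,0^+,y)=(1-\lambda)\,\partial_xq^{(\alpha)}(t,0^-,y)$, which is the cleanest way to substantiate your claim that the interface condition \eqref{eq:diff_lambda} for $u$ is ``inherited'' from the density; as written, that step is asserted rather than checked. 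Your uniqueness argument via the time-dependent version of \eqref{martingale-prob} applied to $s\mapsto v(t-s,Y^{(\alpha)}(s))$ is exactly the right idea; the only caveat is that \eqref{martingale-prob} is stated for time-independent $f$, so you are implicitly appealing to its routine extension (approximate by step functions in $s$, or redo the It\^o--Tanaka computation with the extra $-\partial_t v\,ds$ term), which is fine but worth flagging.
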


Next, we have some pointwise estimates for the derivatives of
$u(t,x)$. A similar result for the case of $\lambda={D^+\over
D^++D^-}$ (continuity of flux) was given in \cite{MartinezT12}.

\begin{thm}\label{thm:derv-est} (i) Let
$0<\lambda<1$, $\alpha=\alpha(\lambda)$ as in \eqref{alpha}.
Then the probability distribution of $Y^{(\alpha)}(t)$ under $P^x$
(i.e. $Y^{(\alpha)}(0)=x$) has a density $q^{(\alpha)}(t,x,y)$ which
satisfies
\begin{itemize}
\item There exists $C>0$ such that for all $x\in\mathbb{R}$, $t>0$ and for
Lebesgue a.s. $y\in\mathbb{R}\setminus\{0\}$,
\begin{equation}\label{density-est}
q^{(\alpha)}(t,x,y)\le{C\over\sqrt t}.
\end{equation}
\item There exists $C>0$ such that for all $x\in\mathbb{R}$, $t>0$ and $u_0\in L^1(\mathbb{R})$,
\begin{equation}\label{exptn-est}
|E^xu_0\big(Y^{(\alpha)}(t)\big)|\le{C\over\sqrt t}\|u_0\|_1.
\end{equation}
\end{itemize}
(ii) For all $j=0,1,2$ and $i=1, 2, 3, 4$ satisfying $2j+i\le 4$
there exists $c>0$ such that for all  $x\in\mathbb{R}$, $t>0$ and
$u_0\in \mathcal{W}^4$,
\begin{equation}\label{derv-est}
\big|{\partial^j\over\partial t^j}{\partial^i\over\partial x^i}u(t,x)\big|\le {C\over \sqrt
t}\|u_0'\|_{\gamma,1},
\end{equation}
where $\gamma=1$ if $2j+i=1$ or $2$; $\gamma=2$ if $2j+i=3$ or $4$,
and $\|g\|_{\gamma,1}=\sum_{i=1}^\gamma\|{\partial^ig\over\partial x^i}\|_1$.
\end{thm}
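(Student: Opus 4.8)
\medskip
\noindent\emph{Proposed proof.} We handle (i) and (ii) separately, with (i) feeding into (ii). For (i): under $P^x$ the law of $Y^{(\alpha)}(t)=\sigma(B^{(\alpha)}(t))$ is the push-forward under $\sigma$ of the law of skew Brownian motion $B^{(\alpha)}$ started at $\sigma^{-1}(x)$. Hence, with $p_t(z)=(2\pi t)^{-1/2}e^{-z^2/(2t)}$ and the classical (Walsh; Harrison--Shepp) transition density $p^{(\alpha)}(t,a,b)=p_t(b-a)+(2\alpha-1)\,\mathrm{sgn}(b)\,p_t(|a|+|b|)$ of $B^{(\alpha)}$,
\[
q^{(\alpha)}(t,x,y)=\big|(\sigma^{-1})'(y)\big|\;p^{(\alpha)}\!\big(t,\sigma^{-1}(x),\sigma^{-1}(y)\big).
\]
Since $\sigma$ is a sign-preserving, piecewise-linear bijection with $|(\sigma^{-1})'|\le(\min\{D^+,D^-\})^{-1/2}$, this exhibits $q^{(\alpha)}(t,x,\cdot)$ as a finite linear combination, with uniformly bounded coefficients, of the Gaussian kernels $p_t(\cdot)$; the elementary bound $p_t\le(2\pi t)^{-1/2}$ yields \eqref{density-est}, and \eqref{exptn-est} then follows from $L^1$--$L^\infty$ duality, $\big|E^x u_0(Y^{(\alpha)}(t))\big|=\big|\int u_0(y)q^{(\alpha)}(t,x,y)\,dy\big|\le\|q^{(\alpha)}(t,x,\cdot)\|_\infty\|u_0\|_1$.

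For (ii), I would first reduce time derivatives to spatial ones: by \thmref{thm:stoch-representation}, $u$ solves \eqref{eq:diff}, so on each half-line (where $D$ is constant) $\partial_t u=\frac{D(x)}{2}\partial_x^2 u$, and iterating gives $\partial_t^{\,j}\partial_x^{\,i}u(t,x)=(D(x)/2)^{j}\,\partial_x^{\,2j+i}u(t,x)$ for $x\ne0$. Since $D^{\pm}$ are fixed constants it suffices to estimate $\partial_x^{\,m}u$ for $m=2j+i\in\{1,\dots,4\}$. The even orders are handled by part~(i): Itô's formula \eqref{martingale-prob} with $f=u_0$ gives $u(t,x)=u_0(x)+\int_0^t P_s(\tilde{\mathcal L}u_0)(x)\,ds$, where $P_s g(x):=E^x g(Y^{(\alpha)}(s))$, hence $\partial_t u(t,\cdot)=P_t(\tilde{\mathcal L}u_0)$; iterating --- which is legitimate precisely because $\mathcal W^4$ is defined so that $\tilde{\mathcal L}u_0$ again belongs to $\mathcal W^2$ --- yields $\partial_t^{\,k}u(t,\cdot)=P_t(\tilde{\mathcal L}^{k}u_0)$ for $2k\le4$, and applying \eqref{exptn-est} to $\tilde{\mathcal L}^{k}u_0=(D/2)^k u_0^{(2k)}$ bounds $\partial_x^{\,2k}u$. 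For the odd orders $m=2k+1$ I would write $\partial_x^{\,2k+1}u(t,x)=\frac{2}{D(x)}\,\partial_x\!\big[P_t(\tilde{\mathcal L}^{k}u_0)\big](x)$ and recover the missing half-power of smoothing by a single integration by parts against the kernel.

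That integration by parts is the crux. Writing $\partial_x P_t g(x)=\int\partial_x q^{(\alpha)}(t,x,w)\,g(w)\,dw$ and using $\partial_x p_t(\sigma^{-1}(w)-\sigma^{-1}(x))=-\frac{(\sigma^{-1})'(x)}{(\sigma^{-1})'(w)}\,\partial_w p_t(\sigma^{-1}(w)-\sigma^{-1}(x))$ together with the analogous relation, with a sign- and side-dependent constant, for the reflected kernel $p_t(|\sigma^{-1}(x)|+|\sigma^{-1}(w)|)$, one splits $\int_{\mathbb R}$ at $0$ and integrates by parts over $(-\infty,0)$ and $(0,\infty)$. The interior contributions reassemble into $\int K(t,x,w)g'(w)\,dw$ with $K(t,x,\cdot)$ again a bounded combination of the $p_t$'s, so $\|K(t,x,\cdot)\|_\infty\le Ct^{-1/2}$ (and $\|K(t,x,\cdot)\|_1\le C$), while the terms at $w=0^{\pm}$ collapse to a constant times $p_t(|\sigma^{-1}(x)|)(g(0^+)-g(0^-))$ plus a term carrying the factor $b^+\sqrt{D^+}+b^-\sqrt{D^-}$, where $b^{\pm}$ are the one-sided reflection coefficients of $q^{(\alpha)}$. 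The first vanishes by continuity of $g$ at $0$, and the second because the calibration $\alpha=\alpha(\lambda)$ of \eqref{alpha} is exactly what forces $b^+\sqrt{D^+}+b^-\sqrt{D^-}=0$; when a further integration by parts is required (the higher odd orders), the matching boundary term is killed by the interface relation $\lambda g'(0^+)=(1-\lambda)g'(0^-)$ built into $\mathcal W^2$. Combining $\|K(t,x,\cdot)\|_\infty\le Ct^{-1/2}$ with $\|g'\|_1=\|(\tilde{\mathcal L}^{k}u_0)'\|_1$, and bounding the latter by the relevant derivatives of $u_0$ (here $\mathcal W^4$ guarantees $\tilde{\mathcal L}^{k}u_0$ lies in the right class), yields the odd-order estimates with the stated $t^{-1/2}$ and the stated derivative count $\gamma$; together with the even-order estimates this gives \eqref{derv-est}.

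The step I expect to be the main obstacle is precisely the cancellation of the two interface-point boundary terms above. This is the only point where the full generality of the one-parameter family of interface conditions --- as opposed to the conservative, self-adjoint case treated in \cite{MartinezT12} --- genuinely enters, and it is rescued by the symmetrization underlying the whole paper: the reflection coefficient $2\alpha-1$ of the skew-Brownian density is tuned to $\lambda$ through \eqref{alpha} in exactly the way that makes the boundary terms cancel. The remaining points are routine: justifying differentiation under the integral sign (valid for $x\ne0$ by the Gaussian bounds on $\partial_x^{\,m}q^{(\alpha)}$), convergence of the Duhamel-type time integrals (from $\int_0^t s^{-1/2}\,ds<\infty$ and \eqref{exptn-est}), and, where Fubini is used to pass from pointwise to $L^1$ bounds on $\partial_x^{\,k}u(s,\cdot)$, the $L^1$-boundedness of $P_s$ up to the constant comparing Lebesgue measure with the invariant measure $c(x)\,dx$ that renders $\tilde{\mathcal L}$ self-adjoint.
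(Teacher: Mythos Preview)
Your treatment of part~(i) coincides with the paper's: both read off the explicit Walsh density $p^{(\alpha)}$, push it forward under $\sigma$, and bound the resulting Gaussian pieces by $C/\sqrt t$.

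For part~(ii) you take a genuinely different route. The paper does \emph{not} integrate by parts against the explicit kernel; instead it uses the first-passage decomposition
\[
u(t,x)=E^x u_0\big(x+\sqrt{D^+}B(t)\big)+v(t,x),\qquad
v(t,x)=\int_0^t H(s)\,r_0^x(t-s)\,ds,
\]
where $r_0^x$ is the density of the first hitting time of $0$ by $Y^{(\alpha)}$ started at $x$, and then invokes the estimates on $\int_0^t s^{-\alpha}r_0^x(t-s)\,ds$ and on $\partial_x^i\!\int_0^t r_0^x(t-s)H(s)\,ds$ borrowed from \cite{MartinezT12} (the lemmas in the Appendix). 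This localizes the analysis to a constant-coefficient half-line plus a boundary-layer correction carried entirely by $r_0^x$, so no interface bookkeeping inside an integration by parts is ever needed. Your approach --- differentiate the explicit $q^{(\alpha)}$, split at $0$, integrate by parts in $w$ --- is more elementary and avoids the first-passage machinery, but it lives or dies on the boundary terms at $w=0^\pm$, and that is where your write-up has a gap.

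Concretely: for $x>0$ a direct computation (treating the ``direct'' and ``reflected'' Gaussian pieces separately on each half-line) gives a \emph{single} boundary contribution,
\[
\frac{2(1-\alpha)}{\sqrt{D^+}}\,p_t\!\big(x/\sqrt{D^+}\big)\,\big[g(0^+)-g(0^-)\big],
\]
and nothing else. There is no second term carrying a factor of the form $b^+\sqrt{D^+}+b^-\sqrt{D^-}$; indeed no such combination vanishes for general $\lambda$ under the calibration \eqref{alpha}. So the cancellation at the first integration by parts comes purely from continuity of $g$ at $0$, not from $\alpha=\alpha(\lambda)$. The interface parameter enters your argument \emph{earlier}, in the validity of \eqref{martingale-prob} and hence of the identities $\partial_t^{\,k}u=P_t(\tilde{\mathcal L}^{k}u_0)$, not in the IBP. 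This matters for the odd order $m=3$: there $g=\tilde{\mathcal L}u_0=\tfrac{D(\cdot)}{2}u_0''$, and the definition of $\mathcal W^4$ imposes the transmission condition on $(\tilde{\mathcal L}u_0)'$ but says nothing about continuity of $\tilde{\mathcal L}u_0$ itself, so your claimed cancellation fails. The approach is salvageable --- the surviving jump term is bounded by $C t^{-1/2}\|u_0''\|_\infty\le C t^{-1/2}\|u_0'''\|_1$ since $u_0''\to0$ at infinity and $u_0'''\in L^1$ --- but this is not the mechanism you describe, and the role you assign to $\alpha(\lambda)$ and to the condition $\lambda g'(0^+)=(1-\lambda)g'(0^-)$ in the boundary analysis is misplaced.
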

The proof of the theorem will be presented in the next subsection to
keep the presentation more transparent.

\subsection{The Numerical Method}

In this section we will construct an explicit one-to-one
transformation which transforms $Y^{(\alpha)}$ to a solution to a
stochastic differential equation without a local time which can
easily be discretized by a standard Euler-Maruyama scheme. Since the
transformation is one-to-one and explicit, we can take the inverse
transformation of this numerical solution to obtain a numerical
approximation for $Y^{(\alpha)}$. As a consequence of Theorem
\ref{thm:stoch-representation}, we can approximate $u(t,x)$ by
$E^xu_0(Y^{(\alpha)}(t))$ and compute the latter using the
Monte-Carlo simulation.

 To proceed, we denote
\begin{equation}\label{beta-defn}
\beta(x)=\lambda
 x\mathbf{1}_{(-\infty,0]}(x)+(1-\lambda)x\mathbf{1}_{(0,\infty)}(x).
\end{equation}
Then $\beta'_{-}(x)=\lambda
 \mathbf{1}_{(-\infty,0]}(x)+(1-\lambda)\mathbf{1}_{(0,\infty)}(x)$
 and
$\beta^{-1}(x)={x\over\lambda}
 \mathbf{1}_{(-\infty,0]}(x)+{x\over1-\lambda}\mathbf{1}_{(0,\infty)}(x)$.
 It follows that
\begin{equation}\label{alpha-defn}
\theta(x):=\beta_-'(x)\sqrt{D(x)}=\lambda\sqrt{D^-}
 \mathbf{1}_{(-\infty,0]}(x)+(1-\lambda)\sqrt{D^+}\mathbf{1}_{(0,\infty)}(x).
 \end{equation}
Since $\lambda \beta'_-(0^+)=(1-\lambda)\beta'_-(0^-)$ and $\beta\in
C^2(\mathbb{R}\setminus\{0\})\cap C(\mathbb{R})$, by virtue of
\eqref{martingale-prob},
\begin{align}
\beta\big(Y^{(\alpha)}(t)\big)&=\beta\big(Y^{(\alpha)}(0)\big)+\int_0^t\beta'_-\big(Y^{(\alpha)}(s)\big)\sqrt{D\big(Y^{(\alpha)}(s)\big)}dB(s)\notag\\
&=\beta\big(Y^{(\alpha)}(0)\big)+\int_0^t\theta\big(\beta\big(Y^{(\alpha)}(s)\big)\big)dB(s).\label{beta(Y)}
\end{align}
Denote $X(t)=\beta\big(Y^{(\alpha)}(t)\big)$, then \eqref{beta(Y)}
yields
\begin{equation}\label{X(t)-eqn}
X(t)=X(0)+\int_0^t\theta(X(s))dB(s).
\end{equation}
Let $\Delta=\Delta t={T\over M}$ be the step size. For $0\le k\le
M$, put $t_k=k\Delta t$. Let $\bar X^\Delta(t)$ be the
Euler-Maruyama approximation of $X(t)$,
\begin{equation}\label{barX^n-eqn}
\bar X^\Delta(t)=\bar X^\Delta(t_k)+\theta\big(\bar
X^\Delta(t_k)\big)\big(B(t)-B(t_k)\big),\quad \bar
X^\Delta(0)=\beta\big(Y^{(\alpha)}(0)\big).
\end{equation}
The numerical solution to \eqref{eq:diff} can be now obtained.
Define
\begin{equation}\label{barY^n-eqn}
\bar Y^\Delta(t)=\beta^{-1}\big(\bar X^\Delta(t)\big),\quad
u_\Delta(T,x)=E^xu_0(\bar Y^\Delta(T)).
\end{equation}
The convergence rate of the above numerical method is given in the
following theorem.
\begin{thm}\label{thm:num-r-est} For all initial condition $u_0\in \mathcal{W}^4$, all parameter
$0<\epsilon<1/2$ there exists a constant $C$ depending on $\epsilon$
such that for all $n$ large enough, and all $x_0\in{\mathbb R}$,
\begin{equation}\label{num-r-est}
\big|E^{x_0}u_0(Y^{(\alpha)}(T))-E^{x_0}u_0(\bar
Y^{\Delta}(T))\big|\le C\|u_0'\|_{1,1}\Delta
t^{(1-\epsilon)/2}+C\|u_0'\|_{1,1}\sqrt{\Delta
t}+C\|u_0'\|_{3,1}\Delta t^{1-\epsilon}.
\end{equation}
\end{thm}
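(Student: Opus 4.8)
The plan is to transfer the estimate, through the explicit bijection $\beta$ of \eqref{beta-defn}, into a weak-error bound for the Euler--Maruyama scheme \eqref{barX^n-eqn} associated with the genuinely non-degenerate SDE \eqref{X(t)-eqn}, $dX=\theta(X)\,dB$, and then to exploit the fact that in the $z$-variable the relevant PDE solution has become $C^{1}$ across the interface.

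First I would set $\tilde u(t,z):=u\big(t,\beta^{-1}(z)\big)$, with $u$ the function of Theorem~\ref{thm:stoch-representation}. A change of variables, using \eqref{alpha-defn}, shows that $\tilde u$ solves $\partial_t\tilde u=\tfrac12\,\theta(z)^2\,\partial_{zz}\tilde u$ on $\mathbb R\setminus\{0\}$; that $\tilde u(t,\cdot)\in C^{1}(\mathbb R)$, since the interface condition \eqref{eq:diff_lambda} on $u$ is precisely the matching $\partial_z\tilde u(t,0^+)=\partial_z\tilde u(t,0^-)$ (this is the symmetrization); and that the pointwise estimates of Theorem~\ref{thm:derv-est}(ii) transfer to $\tilde u$, i.e.\ $\big|\partial_t^j\partial_z^i\tilde u(t,z)\big|\le C\,t^{-1/2}\,\|u_0'\|_{\gamma,1}$ for $2j+i\le4$. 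Since $X(t)=\beta(Y^{(\alpha)}(t))$ and $\bar Y^\Delta(t)=\beta^{-1}(\bar X^\Delta(t))$, the quantity in \eqref{num-r-est} is $\big|\tilde u(T,\beta(x_0))-E^{x_0}\tilde u(0,\bar X^\Delta(T))\big|$; and because $s\mapsto\tilde u(T-s,X(s))=u(T-s,Y^{(\alpha)}(s))$ is a martingale under $P^{x_0}$ (Markov property), this equals $\big|E^{x_0}[\tilde u(0,X(T))-\tilde u(0,\bar X^\Delta(T))]\big|$, a comparison of the exact and discretized processes both started at $\beta(x_0)$.

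Second, telescoping over the grid $t_0<\dots<t_M$ and applying on each step the Meyer--It\^o formula to $t\mapsto\tilde u(T-t,\bar X^\Delta(t))$ --- legitimate with only the a.e.\ second derivative and with \emph{no} local-time term at $0$, precisely because $\tilde u(t,\cdot)\in C^{1}$ --- and then inserting the PDE for $\tilde u$, collapses the one-step error on $[t_k,t_{k+1}]$ to
\[
E\!\int_{t_k}^{t_{k+1}}\!\tfrac12\Big[\theta\big(\bar X^\Delta(t_k)\big)^2-\theta\big(\bar X^\Delta(t)\big)^2\Big]\,\partial_{zz}\tilde u\big(T-t,\bar X^\Delta(t)\big)\,dt ,
\]
in which the bracket vanishes unless $\bar X^\Delta(t_k)$ and $\bar X^\Delta(t)$ lie on opposite sides of $0$ (the martingale parts have zero mean by the derivative estimates above and the uniform moment bounds on $\bar X^\Delta$, which hold since $\theta$ is bounded). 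The sum of these terms is then controlled with three ingredients: (a) a Gaussian-type upper bound for the law of the Euler iterate, $P^{x_0}\big(\bar X^\Delta(t_k)\in dz\big)\le C\,t_k^{-1/2}\,dz$, uniform in $x_0$ and in $\Delta t$ up to a $\Delta t^{-\epsilon}$-type factor (obtained as in \cite{MartinezT12} from Theorem~\ref{thm:derv-est}(i) and an induction over steps using that $\theta$ is bounded above and away from $0$); (b) the single-step crossing bound $P\big(\bar X^\Delta(t_k)\bar X^\Delta(t)<0\mid\bar X^\Delta(t_k)=a\big)\le C\exp\!\big(-ca^2/(t-t_k)\big)$, which localizes the bracket to $|a|\lesssim\sqrt{t-t_k}$; and (c) the bound $\big|\partial_{zz}\tilde u(T-t,z)\big|\le C(T-t)^{-1/2}\|u_0'\|_{1,1}$. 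Combining (a)--(c), the steps away from both endpoints contribute $\sqrt{\Delta t}\,\sum_{k}\big(k(M-k)\big)^{-1/2}\le C\sqrt{\Delta t}\,\|u_0'\|_{1,1}$, the discrete Beta-type sum being bounded uniformly in $M$; the step adjacent to $t=T$ contributes $O(\Delta t)$, the localization in (b) absorbing the integrable $(T-t)^{-1/2}$ singularity; a refined Talay--Tubaro-type expansion of the one-step error on steps where $\bar X^\Delta$ stays away from the interface, involving $\partial_z^4\tilde u$ and therefore the higher-order norm $\|u_0'\|_{3,1}$, yields the $\Delta t^{1-\epsilon}$ term; and the near-interface residue, where the loss in (a) is felt, yields the $\Delta t^{(1-\epsilon)/2}$ term.

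I expect the principal obstacle to be ingredient (a): establishing an Aronson-type upper bound, uniform in the step size, for the one-dimensional Euler scheme whose diffusion coefficient $\theta$ is frozen over each step and is merely piecewise constant. No parametrix expansion is available directly; one must argue inductively, using that $\theta$ is bounded above and bounded away from $0$ (so each Gaussian increment has comparable variance) to propagate a $t_k^{-1/2}$ control on the occupation density near the interface, and the interaction of this with the blow-up of $\partial_{zz}\tilde u$ as $T-t\downarrow0$ is what forces one to surrender a small power $\Delta t^{\epsilon}$ and thus produces the exponents $(1-\epsilon)/2$ and $1-\epsilon$ rather than the clean $1/2$ and $1$. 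This is also the place where the \emph{generality} of the interface condition bites: in the self-adjoint case of \cite{MartinezT12} the bracket $[\theta^2(\bar X^\Delta(t_k))-\theta^2(\bar X^\Delta(t))]$ and the attendant density estimates were more benign, whereas here a careful quantification of the interface contribution --- carried out on the basis of Theorem~\ref{thm:derv-est} --- is needed. A secondary technical point is the justification of the Meyer--It\^o formula for the time-dependent, only $C^{1}$-in-space function $\tilde u(T-t,\cdot)$ up to but not including $t=T$, with the terminal layer handled separately.
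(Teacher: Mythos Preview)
Your overall strategy—pass to $\tilde u(t,z)=u(t,\beta^{-1}(z))$, observe that the interface condition makes $\tilde u(t,\cdot)\in C^1(\mathbb R)$, telescope $\tilde u(T-t,\bar X^\Delta(t))$ via Meyer--It\^o, and reduce the one-step error to $\tfrac12\int_{t_k}^{t_{k+1}}[\theta(\bar X^\Delta(t_k))^2-\theta(\bar X^\Delta(t))^2]\,\partial_{zz}\tilde u\,dt$, supported on sign-change events—is correct in outline but genuinely different from what the paper does, and the paper's route sidesteps exactly the obstacle you flag.

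The paper does \emph{not} apply a continuous-time It\^o formula. It telescopes $u(T-t_k,\beta^{-1}(\bar X^\Delta(t_k)))$ discretely, splits each difference into a time increment $T_k$ and a space increment $S_k$, and Taylor-expands: $T_k$ to second order in $\Delta t$ (the remainder uses $\partial_{tt}u$ and costs $\|u_0'\|_{3,1}\Delta t^2/\sqrt{s_{k+1}}$), and $S_k$ to fourth order in $\triangle_{k+1}\bar X^\Delta$, separately on the four events $\Omega_k^{\pm\pm}$. The leading orders recombine to $(\partial_t u-\tilde{\mathcal L}u)\Delta t=0$; the second-order time and fourth-order space remainders, summed, produce the $\|u_0'\|_{3,1}\Delta t$ term. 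That is where the higher norm enters—\emph{not} from a ``Talay--Tubaro refinement on non-crossing steps'' as you write: in your It\^o formulation the bracket vanishes identically off the crossing events, so your account of the $\|u_0'\|_{3,1}$ contribution is misplaced (and in fact your route, being exact step-by-step, would not generate it). On the crossing events the paper expands $u$ around $0$ from both sides and uses the transmission condition $\lambda u_x(0^+)=(1-\lambda)u_x(0^-)$ to cancel the first-order piece explicitly; this is the discrete, hands-on analogue of your ``no local time because $\tilde u\in C^1$.''

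The more important divergence concerns your ingredient~(a). The paper never establishes or invokes an Aronson-type density bound for the Euler iterates. Instead it localises each crossing event to $\{|\bar X^\Delta(t_k)|\le\Delta t^{1/2-\epsilon}\}$ (the complement has probability $\le C\exp(-CM^\epsilon)$ by a Gaussian tail) and then applies the \emph{small-ball estimate}, quoted in the Appendix as Theorem~A.9 of \cite{MartinezT12}: with $f(t)=1/\sqrt{T-t}$,
\[
h\sum_{k}f(kh)\,P\big(|\bar X^\Delta(kh)|\le h^{1/2-\epsilon}\big)\ \le\ Ch^{1/2-\epsilon}
\]
for all $h$ small. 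This is precisely what is needed to sum the crossing contributions against the $1/\sqrt{s_{k+1}}$ weight from the derivative bounds, and it is already available off the shelf. Your ingredient~(a) is strictly stronger, is not proved in the paper or in \cite{MartinezT12}, and you do not prove it either; you yourself call it the principal obstacle. If you replace (a) by this small-ball estimate—your (b) then becomes redundant, since the localisation already does that work—your continuous-time argument closes. What each route buys: the paper's discrete Taylor expansion is more laborious but stays entirely within the toolkit of \cite{MartinezT12}; your Meyer--It\^o approach is cleaner and packages the transmission condition into $C^1$ regularity, but as written it rests on an unproven density bound.
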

Next, we can relax the transmission conditions of $u_0$ and
$\tilde{\mathcal{L}} u_0$ in the above theorem which are required in
the definition of $\mathcal{W}_4$.
\begin{thm}\label{thm:num-r-est-2} Let $u_0:\mathbb{R}\to\mathbb{R}$ be in the space
$$
\mathcal{W}=\Big\{g\in\mathcal{C}^4_b(\mathbb{R}\backslash\{0\}),
g^{(i)}\in L^1(\mathbb{R})\cap L^2(\mathbb{R}) \text{ for }
i=1,...,4\Big\}.
$$
Then for any parameter $0<\epsilon<1/2$ there exists a constant $C$
depending on $u_0$ and $\epsilon$ such that for all $n$ large
enough, and all $x_0\in{\mathbb R}$,
\begin{equation}\label{num-r-est2}
\big|u_\Delta(T,x_0)-u(T,x_0)\big|\le C \Delta t^{1/2-\epsilon}.
\end{equation}
\end{thm}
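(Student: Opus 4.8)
\noindent\emph{Plan of proof.} The plan is to reduce Theorem~\ref{thm:num-r-est-2} to Theorem~\ref{thm:num-r-est} by \emph{regularizing the initial datum}: one peels off from $u_0$ an explicit, spatially concentrated correction that restores the two transmission conditions, at the cost of a term whose $L^1$ mass tends to $0$. Throughout write $u(T,x)=E^xu_0(Y^{(\alpha)}(T))$ and $u_\Delta(T,x)=E^xu_0(\bar Y^{\Delta}(T))$. Given $u_0\in\mathcal{W}$, record its two transmission defects $c_1:=\lambda u_0'(0^+)-(1-\lambda)u_0'(0^-)$ and $c_2:=\lambda(\tilde{\mathcal{L}}u_0)'(0^+)-(1-\lambda)(\tilde{\mathcal{L}}u_0)'(0^-)$, which vanish exactly when $u_0\in\mathcal{W}^4$. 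Fix once and for all two functions $\psi_1,\psi_2$, smooth and compactly supported on each of $(-\infty,0]$ and $[0,\infty)$, with $\psi_1'''(0^\pm)=0$, $\lambda\psi_1'(0^+)\neq(1-\lambda)\psi_1'(0^-)$ and $\lambda D^+\psi_2'''(0^+)\neq(1-\lambda)D^-\psi_2'''(0^-)$. For a small parameter $\eta>0$ set $r_\eta(x):=a_1\eta\,\psi_1(x/\eta)+a_2\eta^3\,\psi_2(x/\eta)$, with $a_1=a_1(\eta),a_2=a_2(\eta)$ determined by solving the triangular $2\times2$ system (triangular precisely because $\psi_1'''(0^\pm)=0$) that forces $r_\eta$ to have transmission defects $(c_1,c_2)$; the scalings $\eta^{1},\eta^{3}$ are chosen so that $a_1,a_2$ stay bounded as $\eta\downarrow0$. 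A change of variables then gives, for $0<\eta\le1$,
\[
\|r_\eta\|_{1}\le C_{u_0}\,\eta^{2},\qquad \|(u_0-r_\eta)'\|_{1,1}\le C_{u_0},\qquad \|(u_0-r_\eta)'\|_{3,1}\le C_{u_0}\,\eta^{-2},
\]
and $u_0^\eta:=u_0-r_\eta\in\mathcal{W}^4$, since it inherits the $C^4_b(\mathbb{R}\setminus\{0\})$ and $L^1\cap L^2$ regularity of $u_0$ and has vanishing transmission defects.

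Next I would split
\[
u(T,x_0)-u_\Delta(T,x_0)=\Big(E^{x_0}u_0^\eta\big(Y^{(\alpha)}(T)\big)-E^{x_0}u_0^\eta\big(\bar Y^{\Delta}(T)\big)\Big)+E^{x_0}r_\eta\big(Y^{(\alpha)}(T)\big)-E^{x_0}r_\eta\big(\bar Y^{\Delta}(T)\big).
\]
Since $u_0^\eta\in\mathcal{W}^4$, Theorem~\ref{thm:num-r-est} applied to $u_0^\eta$ bounds the bracket by $C\big(\|(u_0^\eta)'\|_{1,1}(\Delta t^{(1-\epsilon)/2}+\sqrt{\Delta t})+\|(u_0^\eta)'\|_{3,1}\Delta t^{1-\epsilon}\big)\le C_{u_0}\big(\Delta t^{(1-\epsilon)/2}+\eta^{-2}\Delta t^{1-\epsilon}\big)$, the constant being uniform because it depends only on $\epsilon$. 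For the two remaining terms, \eqref{exptn-est} of Theorem~\ref{thm:derv-est} gives $|E^{x_0}r_\eta(Y^{(\alpha)}(T))|\le C\,T^{-1/2}\|r_\eta\|_{1}$, and I would establish the matching bound $|E^{x_0}r_\eta(\bar Y^{\Delta}(T))|\le C\,T^{-1/2}\|r_\eta\|_{1}$ for the Euler process from the elementary fact that $\bar X^{\Delta}(T)$ is a sum of conditionally Gaussian increments with variances $\theta(\cdot)^2\Delta t$, with $\theta$ bounded above and below away from $0$, so that $\bar X^{\Delta}(T)$ has a Lebesgue density $\le C\,T^{-1/2}$ uniformly in $\Delta t$ and $x_0$, and $\bar Y^{\Delta}(T)=\beta^{-1}(\bar X^{\Delta}(T))$ inherits such a bound since $\beta^{-1}$ is bi-Lipschitz. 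Hence both of these terms are $\le C_{u_0}\eta^{2}$.

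Combining, for all $x_0\in\mathbb{R}$ and all $\Delta t$ sufficiently small,
\[
\big|u(T,x_0)-u_\Delta(T,x_0)\big|\le C_{u_0}\big(\eta^{2}+\Delta t^{(1-\epsilon)/2}+\eta^{-2}\Delta t^{1-\epsilon}\big).
\]
Choosing $\eta=\Delta t^{(1-\epsilon)/4}$ balances the first and third summands and yields $|u(T,x_0)-u_\Delta(T,x_0)|\le C_{u_0}\Delta t^{(1-\epsilon)/2}\le C_{u_0}\Delta t^{1/2-\epsilon}$ (using $\Delta t\le1$), which is \eqref{num-r-est2}; the constant depends on $u_0$ (through $c_1,c_2$ and the fixed $\psi_i$) and on $\epsilon$, as asserted.

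The only genuinely nonroutine ingredient is the construction in the first paragraph: $r_\eta$ must simultaneously have $L^1$ mass $o(1)$, so the two ``mass'' terms disappear, and keep $\|(u_0-r_\eta)'\|_{3,1}$ from growing faster than $\eta^{-2}$, so the Theorem~\ref{thm:num-r-est} contribution can still be absorbed after optimizing $\eta$; reconciling these requirements is exactly what pins down the powers $\eta^{1},\eta^{3}$ and the normalization $\psi_1'''(0^\pm)=0$. The uniform density estimate for the Euler chain is then a short, elementary step. Note that, in contrast with \cite{MartinezT12}, no self-adjointness of the operator enters here: the general interface condition is already carried inside Theorems~\ref{thm:derv-est} and~\ref{thm:num-r-est}, and the present argument only recombines them. (An alternative route would replace the datum $u_0$ by the smoothed datum $P_\delta u_0\in\mathcal{W}^4$ for small $\delta=\delta(\Delta t)$ and use the Markov property of the Euler chain; this needs an extra short-time $L^1$ estimate and seems less clean, so I would not pursue it.)
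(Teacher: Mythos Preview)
Your overall strategy coincides with the paper's: regularize $u_0$ into $\mathcal{W}^4$, invoke Theorem~\ref{thm:num-r-est} on the regularized datum, and control the remainder. Your subtractive correction $r_\eta$ is a different regularization from the paper's---the paper instead \emph{flattens} $u_0$ to the constant $u_0(0)$ on $[-\delta,\delta]$ via explicit polynomial interpolation on $[\delta,2\delta]$ and $[-2\delta,-\delta]$---but both constructions produce the same key estimates $\|u_0-u_\delta\|_1\le C\delta^2$, $\|u_\delta'\|_{1,1}\le C$, $\|u_\delta'\|_{3,1}\le C\delta^{-2}$, and both lead to the optimized scale $\delta\sim\Delta t^{1/4}$.

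The real gap is your treatment of $E^{x_0}r_\eta(\bar Y^\Delta(T))$. You claim that $\bar X^\Delta(T)$ has density $\le C\,T^{-1/2}$ uniformly in $\Delta t$ and $x_0$, calling this an ``elementary'' consequence of the increments being conditionally Gaussian with variance bounded below. It is not elementary: the conditioning couples the variances to the path, so neither a characteristic-function iteration nor a convolution argument closes (the one-step bound gives only $C/\sqrt{\Delta t}$, which is useless after optimizing $\eta$; a Dambis--Dubins--Schwarz time change writes $\bar X^\Delta(T)-\beta(x_0)=\tilde W(\sigma)$ with $\sigma$ a stopping time in $[\theta_{\min}^2T,\theta_{\max}^2T]$, but $\tilde W$ evaluated at such a stopping time need not have bounded density). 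Aronson-type bounds for Euler schemes exist, but the standard versions assume smooth coefficients, whereas here $\theta$ is discontinuous.

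The paper avoids this entirely by a \emph{bootstrap through Theorem~\ref{thm:num-r-est} itself}. It picks a smooth bump $\chi$ with $\chi^{(i)}(0)=0$ for $i=1,\dots,4$, so that the rescaled $\chi_\delta(\cdot)=\chi(\cdot/2\delta)\ge\mathbf 1_{[-2\delta,2\delta]}$ lies in $\mathcal{W}^4$ trivially (both transmission conditions have both sides equal to zero). Then
\[
P^{x_0}\big(|\bar Y^\Delta(T)|\le 2\delta\big)\le E^{x_0}\chi_\delta(\bar Y^\Delta(T))\le\big|E^{x_0}\chi_\delta(\bar Y^\Delta(T))-E^{x_0}\chi_\delta(Y^{(\alpha)}(T))\big|+E^{x_0}\chi_\delta(Y^{(\alpha)}(T)),
\]
and Theorem~\ref{thm:num-r-est} applied to $\chi_\delta$ (with $\|\chi_\delta'\|_{1,1}\le C\delta^{-1}$, $\|\chi_\delta'\|_{3,1}\le C\delta^{-3}$) bounds the first term, while the density estimate \eqref{density-est} for the \emph{continuous} process $Y^{(\alpha)}(T)$ handles the second. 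This is the missing ingredient in your sketch; with it inserted in place of your unproven density claim, your argument becomes complete.
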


\subsection{Proofs} In this section we will gather the proofs of
\thmref{thm:derv-est}, \thmref{thm:num-r-est} and
\thmref{thm:num-r-est-2}.
\para{Proof of \thmref{thm:derv-est}}.
The proof will follow from a sequence of steps involving lemmas.

\underline{Step $1$:} Prove (i).\\
Let $p^{(\alpha)}(t,x,y)$ be the density function of the skew
Brownian motion $B^{(\alpha)}$, then according to \cite{walsh1},
\begin{equation}\label{skewBM density}
p^{(\alpha)}(t,x,y)=
\begin{cases}
{1\over\sqrt{2\pi t}}e^{-(y-x)^2\over2t}+{(2\alpha-1)\over\sqrt{2\pi
t}}e^{-(x+y)^2\over2t},&\text{if $x>0$, $y>0$,}\\
{1\over\sqrt{2\pi t}}e^{-(y-x)^2\over2t}-{(2\alpha-1)\over\sqrt{2\pi
t}}e^{-(x+y)^2\over2t},&\text{if $x<0$, $y<0$,}\\
{2\alpha\over\sqrt{2\pi t}}e^{-(y-x)^2\over2t},&\text{if $x\le0$, $y>0$,}\\
{2(1-\alpha)\over\sqrt{2\pi t}}e^{-(y-x)^2\over2t},&\text{if
$x\ge0$, $y<0$.}
\end{cases}
\end{equation}
Hence, it follows from \eqref{Y^alpha-defn} that $Y^{(\alpha)}(t)$
under $P^x$ has a density denoted by $q^{(\alpha)}(t,x,y)$ which
satisfies
\begin{equation}\label{Y^alpha-density}
q^{(\alpha)}(t,x,y)={1\over\sqrt{D(y)}}p^{(\alpha)}\Big(t,{x\over\sqrt{D(x)}},{y\over\sqrt{D(y)}}\Big).
\end{equation}
It is clear that \eqref{skewBM density} and \eqref{Y^alpha-density}
imply \eqref{density-est} and then \eqref{exptn-est}.

\underline{Step 2:} Estimate ${\partial u\over\partial t}(t,x)$. We
first prove the following lemma.
\begin{lem}\label{lem:u_t-est} There exists a positive constant $C$ such that for all
$t\in(0,T]$,
\begin{equation}\label{u_t-est}
\sup_{x\ne0}\big|{\partial u\over\partial t}(t,x)\big|\le{C\over\sqrt
t}\|u_0'\|_{1,1}.
\end{equation}
\end{lem}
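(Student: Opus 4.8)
The plan is to obtain a Dynkin-type representation for $\partial_t u$ and then control it with the density estimate already established in Step~1. The starting point is the It\^o/Dynkin identity \eqref{martingale-prob}, applied with $f=u_0$: since $u_0\in\mathcal{W}^4\subset\mathcal{W}^2$ satisfies the transmission condition $\lambda u_0'(0^+)=(1-\lambda)u_0'(0^-)$ and is $C^2$ off the interface, \eqref{martingale-prob} gives
\begin{equation*}
u_0\big(Y^{(\alpha)}(t)\big)=u_0\big(Y^{(\alpha)}(0)\big)+\int_0^t(u_0)'_-\big(Y^{(\alpha)}(s)\big)\sqrt{D\big(Y^{(\alpha)}(s)\big)}\,dB(s)+\int_0^t\tilde{\mathcal{L}}u_0\big(Y^{(\alpha)}(s)\big)\,ds,
\end{equation*}
where $\tilde{\mathcal{L}}u_0=\tfrac{D}{2}u_0''$. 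The integrand of the stochastic integral is bounded (because $u_0\in C_b^2(\mathbb{R}\setminus\{0\})$ and $D$ takes only the two values $D^\pm$), so that stochastic integral is a genuine martingale; taking $E^x$ and using Fubini (legitimate since $\tilde{\mathcal{L}}u_0$ is bounded) would yield
\begin{equation*}
u(t,x)=u_0(x)+\int_0^t E^x\big[\tilde{\mathcal{L}}u_0\big(Y^{(\alpha)}(s)\big)\big]\,ds.
\end{equation*}

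Next I would differentiate this identity in $t$. To justify $\partial_t u(t,x)=E^x[\tilde{\mathcal{L}}u_0(Y^{(\alpha)}(t))]$ it is enough to know that $s\mapsto E^x[\tilde{\mathcal{L}}u_0(Y^{(\alpha)}(s))]$ is continuous on $[0,T]$ for each $x\neq0$; this follows either from dominated convergence using the bound $q^{(\alpha)}(s,x,y)\le C/\sqrt s$ of Step~1, or, more structurally, by noting that $\tilde{\mathcal{L}}u_0\in\mathcal{W}^2$ whenever $u_0\in\mathcal{W}^4$ (the extra transmission condition in the definition of $\mathcal{W}^4$ is exactly $\lambda(\tilde{\mathcal{L}}u_0)'(0^+)=(1-\lambda)(\tilde{\mathcal{L}}u_0)'(0^-)$, while the required integrability of $(\tilde{\mathcal{L}}u_0)'$ and $(\tilde{\mathcal{L}}u_0)''$ comes from $u_0^{(3)},u_0^{(4)}\in L^1\cap L^2$), so by \thmref{thm:stoch-representation} the map $(s,x)\mapsto E^x[\tilde{\mathcal{L}}u_0(Y^{(\alpha)}(s))]$ lies in $C([0,T]\times\mathbb{R})$.

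The estimate is then immediate: applying \eqref{exptn-est} of Step~1 with $\tilde{\mathcal{L}}u_0\in L^1(\mathbb{R})$ in the role of the initial datum,
\begin{equation*}
\Big|\frac{\partial u}{\partial t}(t,x)\Big|=\big|E^x[\tilde{\mathcal{L}}u_0(Y^{(\alpha)}(t))]\big|\le\frac{C}{\sqrt t}\,\|\tilde{\mathcal{L}}u_0\|_1\le\frac{C\max\{D^+,D^-\}}{2\sqrt t}\,\|u_0''\|_1,
\end{equation*}
uniformly in $x\neq0$; since $\|u_0'\|_{1,1}=\|u_0''\|_1$ by the definition of $\|\cdot\|_{\gamma,1}$, this is exactly \eqref{u_t-est} after renaming the constant.

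The only genuinely delicate point is the interchange of $\partial_t$ with the expectation, i.e. the identification $\partial_t u(t,x)=E^x[\tilde{\mathcal{L}}u_0(Y^{(\alpha)}(t))]$; the rest is routine given \eqref{martingale-prob} and the density bound from Step~1. I would also keep in mind that the same scheme — apply \eqref{martingale-prob} to an appropriate derivative of $u_0$, kill the martingale term, and invoke the $C/\sqrt t$ density estimate — is what will drive the remaining derivative bounds in the proof of \thmref{thm:derv-est}.
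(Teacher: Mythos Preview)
Your argument is correct and considerably more direct than the paper's. You differentiate the Dynkin identity $u(t,x)=u_0(x)+\int_0^tE^x[\tilde{\mathcal{L}}u_0(Y^{(\alpha)}(s))]\,ds$ in $t$ and then apply the $L^1$ density bound \eqref{exptn-est} to $\tilde{\mathcal{L}}u_0$; since $\|\tilde{\mathcal{L}}u_0\|_1\le\tfrac{1}{2}\max\{D^+,D^-\}\|u_0''\|_1=\tfrac{1}{2}\max\{D^+,D^-\}\|u_0'\|_{1,1}$, the estimate drops out immediately. The paper instead introduces the first-passage-time decomposition $u(t,x)=E^xu_0(x+\sqrt{D^+}B(t))+v(t,x)$ via \eqref{loc-time-aplctn}--\eqref{v(t,x)-dfn} and estimates $\partial_t v$ using the appendix lemma on $\int_0^t s^{-1/2}r_0^x(t-s)\,ds$. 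For this particular lemma your shortcut is cleaner; the paper's longer route is there because the decomposition $u=E^xu_0(x+\sqrt{D^+}B(\cdot))+v$ is the machinery actually needed for the \emph{spatial} derivatives in \lemref{lem:u_x-est} and beyond, where one cannot simply differentiate the Dynkin formula in $x$ and must instead push derivatives onto $r_0^x$ via \lemref{lem:r_0^x-est}.

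One small caveat: your second justification for the continuity of $s\mapsto E^x[\tilde{\mathcal{L}}u_0(Y^{(\alpha)}(s))]$ --- invoking \thmref{thm:stoch-representation} with $\tilde{\mathcal{L}}u_0$ as initial datum --- is not quite clean, because membership in $\mathcal{W}^4$ does not force $\tilde{\mathcal{L}}u_0$ to be continuous across $0$ (that would require $D^+u_0''(0^+)=D^-u_0''(0^-)$), and the representation theorem is stated for continuous data. Your first justification via dominated convergence and the explicit density \eqref{Y^alpha-density} is sufficient on its own, so this does not affect the validity of the proof.
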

\para{Proof.} Recall that for any $u_0\in
C^2_b(\mathbb{R}\setminus\{0\})\cap C(\mathbb{R})$ satisfying
$\lambda u_0'(0^+)=(1-\lambda)u_0'(0^-)$ \eqref{martingale-prob}
holds true. Hence, for all $x\in\mathbb{R}$ and $t>0$,
\begin{equation}\label{Dynkin-formula}
E^xu_0\big(Y^{(\alpha)}(t)\big)=g(x)+\int_0^tE^x\tilde{\mathcal{L}}u_0\big(Y^{(\alpha)}(s)\big)ds,
\end{equation}
where the operator $\tilde{\mathcal{L}}$ is defined as in
\eqref{operator L}. In addition, notice that
\begin{equation}\label{Y^al-eqn-localtime}
dY^{(\alpha)}(t)=\sqrt{\big(Y^{(\alpha)}(t)\big)}dB(t)+\Big({\sqrt{D^+}-\sqrt{D^-}\over2}+\sqrt{D^-}{2\alpha-1\over2\alpha}\Big)dl^{B^{(\alpha)},+}_t(0).
\end{equation}

Fix $x>0$. Denote
$\tau_0(Y^{(\alpha)})=\inf\{s>0:Y^{(\alpha)}(s)=0\}$ and $r_0^x(s)$
the density of $\tau_0(Y^{(\alpha)})\wedge T$ under $P^x$. Notice
that $\tau_0(Y^{(\alpha)})=\tau_0\big(x+\sqrt{D^+}B\big)$ where
$B(\cdot)$ is the standard Brownian motion. For all function $h$
such that $E^xh(Y^{(\alpha)})<\infty$ we have
\begin{align}
&E^xh(Y^{(\alpha)}(t))\notag\\
&=E^x\big[h(Y^{(\alpha)}(t))\mathbf{1}_{\{\tau_0\ge
t\}}\big]+E^x\big[h(Y^{(\alpha)}(t))\mathbf{1}_{\{\tau_0<
t\}}\big]\notag\\
&=E^x\big[h\big(x+\sqrt{D^+}B(t)\big)\mathbf{1}_{\{\tau_0\ge
t\}}\big]+\int_0^tE^0h\big(Y^{(\alpha)}(t-s)\big)r_0^x(s)ds\notag\\
&=E^xh\big(x+\sqrt{D^+}B(t)\big)-E^x\big[h\big(x+\sqrt{D^+}B(t)\big)\mathbf{1}_{\{\tau_0<
t\}}\big]+\int_0^tE^0h\big(Y^{(\alpha)}(t-s)\big)r_0^x(s)ds\notag\\
&=E^xh\big(x+\sqrt{D^+}B(t)\big)-\int_0^tE^0h\big(\sqrt{D^+}B(s)\big)r_0^x(t-s)ds+\int_0^tE^0h\big(Y^{(\alpha)}(s)\big)r_0^x(t-s)ds.\label{loc-time-aplctn}
\end{align}
For $x<0$ we have a similar identity. To proceed, we assume that
$x>0$. From \eqref{loc-time-aplctn} we can write
\begin{equation}\label{u(t,x)with-v}
u(t,x)=E^xu_0(Y^{(\alpha)}(t))=E^xu_0\big(x+\sqrt{D^+}B(t)\big)+v(t,x),
\end{equation}
where
\begin{align}
v(t,x)&=-\int_0^tE^0u_0\big(\sqrt{D^+}B(s)\big)r_0^x(t-s)ds+\int_0^tE^0u_0\big(Y^{(\alpha)}(s)\big)r_0^x(t-s)ds\notag\\
&=\int_0^t\int_0^{t-s}\Big[E^0\tilde{\mathcal{L}}u_0\big(Y^{(\alpha)}(\xi)\big)-E^0\tilde{\mathcal{L}}^+u_0\big(\sqrt{D^+}B(\xi)\big)\Big]d\xi
r_0^x(s)ds \label{v(t,x)-dfn}
\end{align}
and $\tilde{\mathcal{L}}^+u_0={D^+\over2}u_0''$. Since
\begin{equation}\label{dv/dt-eqn}
{\partial v\over\partial
t}(t,x)=\int_0^t\Big[E^0\tilde{\mathcal{L}}u_0\big(Y^{(\alpha)}(s)\big)-E^0\tilde{\mathcal{L}}^+u_0\big(\sqrt{D^+}B(s)\big)\Big]
r_0^x(t-s)ds,
\end{equation}
according to Lemma \ref{lem:r_0^x-int} we obtain,
\begin{align}
\Big|{\partial v\over\partial
t}(t,x)\Big|&\le\int_0^t\Big[\big|E^0\tilde{\mathcal{L}}u_0\big(Y^{(\alpha)}(s)\big)\big|+\big|E^0\tilde{\mathcal{L}}^+u_0\big(\sqrt{D^+}B(s)\big)\big|\Big]
r_0^x(t-s)ds\notag\\
&\le
C\big(\|\tilde{\mathcal{L}}u_0\|_1+\|\tilde{\mathcal{L}}^+u_0\|_1\big)\int_0^t{1\over
\sqrt s}r_0^x(t-s)ds\notag\\
&\le {C\over \sqrt
t}\big(\|\tilde{\mathcal{L}}u_0\|_1+\|\tilde{\mathcal{L}}^+u_0\|_1\big).\label{dv/dt-est}
\end{align}
Next we estimate ${\partial\over\partial t} E^xu_0(x+\sqrt{D^+}B(t))$. It is obvious that
the density $q^+(t,x,y)$ of $x+\sqrt{D^+}B(t)$ satisfies the
inequality $q^+(t,x,y)\le{C\over\sqrt t}\exp\{-{(y-x)^2\over\nu
t}\}$ for all $0\le t\le T$ for some constants $C,\nu$. It follows
from the equation
$$
{\partial\over\partial t}
E^xu_0(x+\sqrt{D^+}B(t))=E^x\tilde{\mathcal{L}}^+\big(u_0(x+\sqrt{D^+}B(t))\big)={D^+\over2}\int
u_0''(y)q^+(t,x,y)dy
$$
that
\begin{equation}\label{d_tE^xf-est}
\sup_{x\in\mathbb{R}}\Big|{\partial\over\partial t}
E^xu_0(x+\sqrt{D^+}B(t))\Big|\le{C\over\sqrt
t}\|\tilde{\mathcal{L}}^+u_0\|_1
\end{equation}
Combining \eqref{u(t,x)with-v}, \eqref{dv/dt-est} and
\eqref{d_tE^xf-est} we derive \eqref{u_t-est} as desired.\qed

\begin{lem}\label{lem:u_tt-est} There exists a positive constant $C$ such that for all
$t\in(0,T]$,
\begin{equation}\label{u_tt-est}
\sup_{x\ne0}\big|{\partial^2u\over\partial t^2}(t,x)\big|\le{C\over\sqrt
t}\|u_0'\|_{3,1}.
\end{equation}
\end{lem}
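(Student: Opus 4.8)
The plan is to bootstrap \lemref{lem:u_t-est} by observing that $\partial_t u$ is again a solution of \eqref{eq:diff}, with the transformed datum $\tilde{\mathcal{L}}u_0$ in place of $u_0$. Since $u_0\in\mathcal{W}^4\subset\mathcal{W}^2$ satisfies $\lambda u_0'(0^+)=(1-\lambda)u_0'(0^-)$, the Dynkin identity \eqref{Dynkin-formula} gives
\[
 u(t,x)=u_0(x)+\int_0^t E^x\tilde{\mathcal{L}}u_0\big(Y^{(\alpha)}(s)\big)\,ds,\qquad x\in\mathbb{R},\ t>0 .
\]
Set $g:=\tilde{\mathcal{L}}u_0=\tfrac{D(\cdot)}{2}u_0''$ and $w(t,x):=E^xg\big(Y^{(\alpha)}(t)\big)$. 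The first thing to check is that $g\in\mathcal{W}^2$: one has $g\in C^2_b(\mathbb{R}\setminus\{0\})$ because $u_0\in C^4_b(\mathbb{R}\setminus\{0\})$; off the interface $g'=\tfrac{D}{2}u_0'''$ and $g''=\tfrac{D}{2}u_0^{(4)}$, so $g',g''\in L^1(\mathbb{R})\cap L^2(\mathbb{R})$; and the transmission condition $\lambda g'(0^+)=(1-\lambda)g'(0^-)$ is exactly the second interface condition $\lambda(\tilde{\mathcal{L}}u_0)'(0^+)=(1-\lambda)(\tilde{\mathcal{L}}u_0)'(0^-)$ that defines $\mathcal{W}^4$. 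Hence, by \thmref{thm:stoch-representation}, $w$ is the regular solution of \eqref{eq:diff} with initial datum $g$; in particular $w\in C^{1,2}_b([0,T]\times(\mathbb{R}\setminus\{0\}))\cap C([0,T]\times\mathbb{R})$, so $s\mapsto w(s,x)$ is continuous and, differentiating the displayed identity, $\partial_t u(t,x)=w(t,x)$ for all $t>0$.

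Differentiating once more in $t$ — legitimate because $w$ is $C^1$ in $t$ away from the interface — yields $\partial_t^2 u(t,x)=\partial_t w(t,x)$ for $x\ne0$, $t>0$. Because $g\in\mathcal{W}^2$, I would then apply \lemref{lem:u_t-est} with $u_0$ replaced by $g$ (the argument there uses nothing beyond membership of its datum in $\mathcal{W}^2$), obtaining
\[
 \sup_{x\ne0}\big|\partial_t w(t,x)\big|\le\frac{C}{\sqrt t}\,\|g'\|_{1,1}=\frac{C}{\sqrt t}\,\|g''\|_1 .
\]
The remaining step is pure bookkeeping: since $g''=\tfrac{D}{2}u_0^{(4)}$ off the interface, $\|g''\|_1\le\tfrac12\max\{D^+,D^-\}\,\|u_0^{(4)}\|_1\le\tfrac12\max\{D^+,D^-\}\,\|u_0'\|_{3,1}$, using $\|u_0'\|_{3,1}=\|u_0''\|_1+\|u_0'''\|_1+\|u_0^{(4)}\|_1$. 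Chaining these estimates gives $\sup_{x\ne0}|\partial_t^2 u(t,x)|\le C t^{-1/2}\|u_0'\|_{3,1}$, i.e. \eqref{u_tt-est}.

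I do not expect a genuinely hard analytic step; the content is structural. The point that must be respected — and the reason the space $\mathcal{W}^4$ is defined to carry an interface condition on $\tilde{\mathcal{L}}u_0$ rather than merely $C^4_b$ regularity plus integrability — is precisely that $\tilde{\mathcal{L}}u_0$ has to qualify as an admissible initial datum so that \lemref{lem:u_t-est} can be reapplied to it. The only other thing to get right is the derivative count: one time derivative of $w$ costs two $x$-derivatives of $g=\tfrac{D}{2}u_0''$, hence four $x$-derivatives of $u_0$, which is exactly the subscript in $\|u_0'\|_{3,1}$. Iterating the same bootstrap (with $\tilde{\mathcal{L}}^2u_0$, and so on) is what will deliver the remaining mixed $\partial_t^j\partial_x^i u$ bounds of \thmref{thm:derv-est}.
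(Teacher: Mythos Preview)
Your proof is correct and takes essentially the same approach as the paper: both rest on the key observation that $u_0\in\mathcal{W}^4$ forces $\tilde{\mathcal{L}}u_0\in\mathcal{W}^2$ (this is precisely why $\mathcal{W}^4$ carries the second transmission condition), and then bootstrap \lemref{lem:u_t-est}. The only difference is packaging: you identify $\partial_t u$ with $w(t,x)=E^x(\tilde{\mathcal{L}}u_0)(Y^{(\alpha)}(t))$ and invoke \lemref{lem:u_t-est} for $w$ as a black box, while the paper returns to the explicit decomposition $u(t,x)=E^xu_0(x+\sqrt{D^+}B(t))+v(t,x)$ from the proof of \lemref{lem:u_t-est}, differentiates it a second time in $t$, and re-applies the density bound \eqref{exptn-est} together with \lemref{lem:r_0^x-int}. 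Both routes land on the same estimate $Ct^{-1/2}\big(\|\tilde{\mathcal{L}}(\tilde{\mathcal{L}}u_0)\|_1+\|\tilde{\mathcal{L}}^+(\tilde{\mathcal{L}}^+u_0)\|_1\big)\le Ct^{-1/2}\|u_0'\|_{3,1}$.
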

\para{Proof.} For $u_0\in\mathcal{W}^4$,
$\tilde{\mathcal{L}}u_0\in\mathcal{W}^2$. By virtue of
\eqref{u(t,x)with-v} and \eqref{dv/dt-eqn} we have
\begin{align}
{\partial^2 u\over\partial
t^2}(t,x)&={\partial^2\over\partial
t^2}E^xu_0(x+\sqrt{D^+}B(t))\notag\\
&\quad+\int_0^t{\partial\over\partial
t}\Big[E^0\tilde{\mathcal{L}}u_0\big(Y^{(\alpha)}(t-s)\big)-E^0\tilde{\mathcal{L}}^+u_0\big(\sqrt{D^+}B(t-s)\big)\Big]r_0^x(s)ds\notag\\
&={\partial\over\partial
t}E^x\tilde{\mathcal{L}}^+u_0(x+\sqrt{D^+}B(t))\notag\\
&\quad+\int_0^t\Big[E^0\tilde{\mathcal{L}}(\tilde{\mathcal{L}}u_0)\big(Y^{(\alpha)}(s)\big)-E^0\tilde{\mathcal{L}}^+(\tilde{\mathcal{L}}^+u_0)\big(\sqrt{D^+}B(s)\big)\Big]r_0^x(t-s)ds.\notag
\end{align}
Therefore, by \lemref{lem:u_t-est}, we obtain
\begin{align}
\Big|{\partial^2 u\over\partial t^2}(t,x)\Big|\le {C\over\sqrt
t}\|\tilde{\mathcal{L}}^+(\tilde{\mathcal{L}}^+u_0)\|_1+{C\over\sqrt
t}\Big(\|\tilde{\mathcal{L}}(\tilde{\mathcal{L}}u_0)\|_1+\|\tilde{\mathcal{L}}^+(\tilde{\mathcal{L}}^+u_0)\|_1\Big)={C\over\sqrt
t}\|u_0'\|_{3,1}.
\end{align}

\underline{Step 3:} Estimate ${\partial^iu\over\partial x^i} (t,x)$. We have the
following lemma.

\begin{lem}\label{lem:u_x-est} There exists a positive constant $C$ such that for all
$t\in(0,T]$,
\begin{equation}\label{u_x-est}
\sup_{x\ne0}\big|{\partial u\over\partial x}(t,x)\big|\le{C\over\sqrt
t}\|u_0'\|_{1,1}.
\end{equation}
\end{lem}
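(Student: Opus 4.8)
The plan is to read off $\partial u/\partial x$ from the explicit skew Brownian transition density produced in Step 1, and to integrate by parts so as to transfer the spatial derivative onto $u_0$ — this is essential, since $u_0$ itself need not be integrable and one cannot afford to differentiate the kernel and bound by $\|u_0\|_1$. Starting from $u(t,x)=\int_{\mathbb{R}}u_0(y)\,q^{(\alpha)}(t,x,y)\,dy$, the identity \eqref{Y^alpha-density}, and the substitution $y=\sigma(z)$ (whose Jacobian $\sqrt{D(z)}\,dz$ exactly cancels the factor $1/\sqrt{D(y)}$ in $q^{(\alpha)}$), one obtains
\begin{equation}
u(t,x)=\int_{\mathbb{R}}\tilde u_0(z)\,p^{(\alpha)}\!\Big(t,\tfrac{x}{\sqrt{D(x)}},z\Big)\,dz,\qquad \tilde u_0(z):=u_0(\sigma(z)),
\end{equation}
with $p^{(\alpha)}$ the kernel \eqref{skewBM density}. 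Since $\tilde u_0$ is bounded and, for $x\ne0$, the $x$-derivatives of the Gaussian summands of $p^{(\alpha)}(t,\cdot,z)$ are dominated locally in $x$ by integrable functions of $z$, I may differentiate under the integral sign; with $a=x/\sqrt{D(x)}$ (smooth on each half-line, $|a'(x)|\le 1/\sqrt{\min\{D^+,D^-\}}$) it suffices to estimate $\partial_a\!\int_{\mathbb{R}}\tilde u_0(z)\,p^{(\alpha)}(t,a,z)\,dz$ for $a\ne0$.

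The main step is to move $\partial_a$ onto $\tilde u_0$ by integrating by parts in $z$, separately over $(0,\infty)$ and $(-\infty,0)$. This is available because each Gaussian summand of $p^{(\alpha)}(t,a,z)$ obeys $\partial_a=-\partial_z$ for the \emph{transmitted} pieces $e^{-(z-a)^2/2t}$ and $\partial_a=\partial_z$ for the \emph{reflected} piece $e^{-(z+a)^2/2t}$. The two integrations by parts produce an interior term $\int_{\mathbb{R}}\tilde u_0'(z)\,K(t,a,z)\,dz$, with $K$ a fixed $\alpha$-weighted combination of the Gaussians $\tfrac{1}{\sqrt{2\pi t}}e^{-(z\mp a)^2/2t}$, together with boundary contributions at $z=\pm\infty$ (which vanish since $u_0$ is bounded) and at $z=0^{\pm}$. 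The latter sum to a multiple of $\big(\tilde u_0(0^+)-\tilde u_0(0^-)\big)\tfrac{1}{\sqrt{2\pi t}}e^{-a^2/2t}$, which is $0$: indeed $\sigma(0)=0$ and $u_0\in C(\mathbb{R})$ give $\tilde u_0(0^+)=\tilde u_0(0^-)=u_0(0)$. Hence $\partial_a\!\int\tilde u_0\,p^{(\alpha)}\,dz=\int_{\mathbb{R}}\tilde u_0'(z)\,K(t,a,z)\,dz$ for $a>0$, with an entirely analogous identity for $a<0$.

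To finish, $0<\alpha<1$ gives $|2\alpha-1|<1$, hence $\sup_z|K(t,a,z)|\le C/\sqrt t$ uniformly in $a$, so that $|\partial_x u(t,x)|\le |a'(x)|\,\frac{C}{\sqrt t}\,\|\tilde u_0'\|_{L^1(\mathbb{R})}\le\frac{C'}{\sqrt t}\,\|\tilde u_0'\|_{L^1(\mathbb{R})}$. Finally $\tilde u_0'(z)=u_0'(\sigma(z))\,\sigma'(z)=u_0'(\sigma(z))\sqrt{D(z)}$ for $z\ne0$, so undoing the substitution $y=\sigma(z)$ yields $\|\tilde u_0'\|_{L^1(\mathbb{R})}=\int_{\mathbb{R}}|u_0'(y)|\,dy=\|u_0'\|_1\le\|u_0'\|_{1,1}$, which is \eqref{u_x-est}; only $u_0\in C^2(\mathbb{R}\setminus\{0\})\cap C(\mathbb{R})$ bounded with $u_0'\in L^1$ is used, all of which holds for $u_0\in\mathcal{W}^4$.

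I expect the delicate point to be precisely this cancellation of the interface boundary terms: one must carry the $\alpha$-dependent weights of the transmitted and reflected Gaussians in \eqref{skewBM density} through the integration by parts and check that, once the continuity $\tilde u_0(0^+)=\tilde u_0(0^-)$ is used, no mass remains concentrated at $z=0$ — otherwise $\partial u/\partial x$ would acquire an unbounded term of order $t^{-1/2}$ at the interface rather than being controlled by $\|u_0'\|_1$. An alternative, more in the spirit of the proof of \lemref{lem:u_t-est}, is to differentiate the hitting-time decomposition \eqref{u(t,x)with-v} in $x$, using that $\partial_x r_0^x(s)$ is an $s$-derivative of the smooth bounded function $\partial_x P^x(\tau_0\le s)$ so as to integrate by parts in $s$ and remove the apparent $1/x$ blow-up of $\partial_x r_0^x$; this is workable but more technical.
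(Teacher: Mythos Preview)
Your argument is correct, and it differs genuinely from the paper's proof. The paper does not touch the explicit density at all for the $x$-derivative: instead it differentiates the hitting-time decomposition \eqref{u(t,x)with-v}, handling the first summand by $\partial_x E^x u_0(x+\sqrt{D^+}B(t))=E^x u_0'(x+\sqrt{D^+}B(t))$ and the second by writing $v(t,x)=\int_0^t H(s)\,r_0^x(t-s)\,ds$ with $H(0)=0$, $|H'(s)|\le C(\|\tilde{\mathcal L}u_0\|_1+\|\tilde{\mathcal L}^+u_0\|_1)$, and then invoking the appendix estimate on $\partial_x\!\int_0^t r_0^x(t-s)H(s)\,ds$ (\lemref{lem:r_0^x-est}). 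That route is precisely the ``alternative'' you sketch in your last paragraph.

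The trade-off is this: the paper's argument is uniform across all the mixed derivatives $\partial_t^j\partial_x^i u$ ($2j+i\le4$) because once \lemref{lem:r_0^x-est} is in place, each estimate reduces to identifying the right $H$; but it passes through $\|\tilde{\mathcal L}u_0\|_1$, so it genuinely consumes $\|u_0''\|_1$ and hence the full $\|u_0'\|_{1,1}$. Your direct integration-by-parts against the skew kernel is more elementary, avoids the first-passage machinery entirely, and in fact yields the sharper bound $C t^{-1/2}\|u_0'\|_1$ using only continuity of $u_0$ at $0$ and $u_0'\in L^1$; the cancellation of the $z=0^\pm$ boundary terms that you flag as delicate does go through, the residual coefficient being $2(1-\alpha)$ (respectively $2\alpha$ for $a<0$) times $\tilde u_0(0^+)-\tilde u_0(0^-)=0$. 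The cost is that your method is tailored to the first spatial derivative and to this particular explicit kernel, whereas the paper's decomposition scales to the higher-order estimates needed later in \thmref{thm:derv-est}.
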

\para{Proof.} Since
${\partial \over\partial x}E^xu_0(x+\sqrt{D^+}B(t))=E^xu_0'(x+\sqrt{D^+}B(t))$, we
have
\begin{align}
\Big\|{\partial\over\partial
x}E^xu_0(x+\sqrt{D^+}B(t))\Big\|_{\infty}&=\big\|E^xu_0'(x+\sqrt{D^+}B(t))\big\|_{\infty}\notag\\
&=\big\|\int u_0'(y)q^+(t,x,y)dy\big\|_{\infty}\notag\\
&\le{C\over\sqrt t}\big\|\int u_0'(y)e^{-{(y-x)^2\over
D^+t^2}}dy\big\|_{\infty}\le{C\over\sqrt
t}\|u_0'\|_1.\label{d/dx-E^xf-est}
\end{align}

Let $H(s)=\int_0^s\big[E^0\tilde{\mathcal{L}}u_0(Y^{(\alpha)}(\xi))-E^0\tilde{\mathcal{L}}^+u_0(x+\sqrt{D^+}B(\xi))\big]d\xi$, then since
$H(0)=0$ and $H'(s)\le
C\big(\|\tilde{\mathcal{L}}u_0\|_1+\|\tilde{\mathcal{L}}^+u_0\|_1\big)={C_H\over
s^0}$ we have by \eqref{v(t,x)-dfn} and Lemma \ref{lem:r_0^x-est} that
$$
v(t,x)=\int_0^tH(s)r_0^x(t-s)ds
$$
and
\begin{equation}\label{|v_x|-inq}
\Big|{\partial v\over\partial x}(t,x)\Big|\le\tilde
C\big(\|\tilde{\mathcal{L}}^+u_0\|_1+\|\tilde{\mathcal{L}}u_0\|_1\big).
\end{equation}
In view of \eqref{u(t,x)with-v}, \eqref{d/dx-E^xf-est} and
\eqref{|v_x|-inq} we obtain \eqref{u_x-est}.\qed

By the similar way we can prove the estimates for
$|{\partial^j\over\partial t^j}{\partial^i\over\partial x^i}u(t,x)|$ for $2j+i\le 4$.\qed


\para{Proof of \thmref{thm:num-r-est}}. Denote $s_k=T-t_k$ for $0\le k\le M$. Since
$u(0,x)=u_0(x)$ and $u(T,x)=E^xu_0(Y^{(\alpha)}(T))$,
\begin{align*}
&u(0,\beta^{-1}(\bar X^\Delta(T))=u_0(\beta^{-1}(\bar X^\Delta(T))),\\
&u(T,x_0)=u(T,\bar
X^\Delta(0))=u(T,\beta^{-1}(Y^{(\alpha)}(0))=E^{x_0}u_0(Y^{(\alpha)}(T)).
\end{align*}
Therefore,
\begin{align}
\epsilon_T^{x_0}&=\Big|E^{x_0}u_0\big(Y^{(\alpha)}(T)\big)-E^{x_0}u_0\big(\bar
Y^{\Delta}(T)\big)\Big|=\Big|E^{x_0}u_0\big(\beta^{-1}(\bar
X(T))\big)-E^{x_0}u_0\big(\beta^{-1}(\bar X^\Delta(T))\big)\Big|\notag\\
&=\Big|E^{x_0}u\big(T,\beta^{-1}(\bar
X^\Delta(0))\big)-E^{x_0}u\big(0,\beta^{-1}(\bar X^\Delta(T))\big)\Big|\notag\\
&=\Big|\sum_{k=0}^{M-1}\Big[E^{x_0}u\big(T-t_k,\beta^{-1}(\bar
X^\Delta(t_k))\big)-E^{x_0}u\big(T-t_{k+1},\beta^{-1}(\bar
X^\Delta(t_{k+1}))\big)\Big]\Big|\notag\\
&\le\Big|\sum_{k=0}^{M-2}\Big[E^{x_0}u\big(s_k,\beta^{-1}(\bar
X^\Delta(t_k))\big)-E^{x_0}u\big(s_{k+1},\beta^{-1}(\bar
X^\Delta(t_{k+1}))\big)\Big]\Big|\notag\\
&\quad+\Big|E^{x_0}u\big(s_{M-1},\beta^{-1}(\bar
X^\Delta(t_{M-1}))\big)-E^{x_0}u\big(0,\beta^{-1}(\bar
X^\Delta(T))\big)\Big|.\label{eps-est}
\end{align}
To estimate the second term in \eqref{eps-est}, we use the fact that
$u(0,x)=u_0(x)$ and obtain
\begin{align*}
&\Big|E^{x_0}u\big(s_{M-1},\beta^{-1}(\bar
X^\Delta(t_{M-1}))\big)-E^{x_0}u\big(0,\beta^{-1}(\bar
X^\Delta(T))\big)\Big|\\
&\quad\le\Big|E^{x_0}u\big(s_{M-1},\beta^{-1}(\bar
X^\Delta(t_{M-1}))\big)-E^{x_0}u\big(0,\beta^{-1}(\bar
X^\Delta(t_{M-1}))\big)\Big|\\
&\quad\quad+\Big|E^{x_0}u_0\big(\beta^{-1}(\bar
X^\Delta(t_{M-1}))\big)- E^{x_0}u_0\big(\beta^{-1}(\bar
X^\Delta(T))\big)\Big|.
\end{align*}
Since $u_0''$ is in $L_1({\mathbb R})$, $u_0'$ is bounded and
$u_0\circ\beta^{-1}$ is Lipschitz. By virtue of the inequality
$\sup_{x\ne0}|{\partial u\over\partial t}(t,x)|\le {C\over\sqrt t}\|u_0'\|_{1,1}$ we
have
\begin{equation}\label{2nd-term}
\Big|E^{x_0}u\big(s_{M-1},\beta^{-1}(\bar
X^\Delta(t_{M-1}))\big)-E^{x_0}u\big(0,\beta^{-1}(\bar
X^\Delta(T))\big)\Big|\le C\|u_0'\|_{1,1}\sqrt{\Delta t}.
\end{equation}

It remains to estimate the first term in \eqref{eps-est}. To
proceed, we denote the time and space increments as follow
\begin{align*}
T_k&=u\big(s_k,\beta^{-1}(\bar
X^\Delta(t_k))\big)-u\big(s_{k+1},\beta^{-1}(\bar
X^\Delta(t_{k}))\big),\\
S_k&=u\big(s_{k+1},\beta^{-1}(\bar
X^\Delta(t_{k+1}))\big)-u\big(s_{k+1},\beta^{-1}(\bar
X^\Delta(t_{k}))\big).
\end{align*}
The first term in \eqref{eps-est} then can be rewritten as
$\big|\sum_{k=0}^{M-2}E^{x_0}(T_k-S_k)\big|$. The analysis of this
term will be divided into $4$ steps.

\underline{Step 1}: Estimate  for the time increment $T_k$: Since
$s_k-s_{k+1}=\Delta t$, by the definition of $T_k$ and Taylor
expansion we have
\begin{align*}
&\Big[u\big(s_k,\beta^{-1}(\bar
X^\Delta(t_k))\big)-u\big(s_{k+1},\beta^{-1}(\bar
X^\Delta(t_{k}))\big)\Big]1_{\{\bar X^\Delta(t_k)>0\}}\\
&=\Delta t{\partial u\over\partial t}\big(s_{k+1},\beta^{-1}(\bar
X^\Delta(t_{k}))\big)1_{\{\bar X^\Delta(t_k)>0\}}\\
&\quad+\Delta
t^2\int_{[0,1]^2}{\partial^2 u\over\partial t^2}\big(s_{k+1}+\tau_1\tau_2\Delta
t,\beta^{-1}(\bar
X^\Delta(t_{k}))\big)\tau_1d\tau_1d\tau_21_{\{\bar X^\Delta(t_k)>0\}}\\
&=T^+_k+R^+_k.
\end{align*}
Similarly,
\begin{align*}
&\Big[u\big(s_k,\beta^{-1}(\bar
X^\Delta(t_k))\big)-u\big(s_{k+1},\beta^{-1}(\bar
X^\Delta(t_{k}))\big)\Big]1_{\{\bar X^\Delta(t_k)<0\}}\\
&=\Delta t{\partial u\over\partial t}\big(s_{k+1},\beta^{-1}(\bar
X^\Delta(t_{k}))\big)1_{\{\bar X^\Delta(t_k)<0\}}\\
&\quad+\Delta
t^2\int_{[0,1]^2}{\partial^2 u\over\partial t^2}\big(s_{k+1}+\tau_1\tau_2\Delta
t,\beta^{-1}(\bar
X^\Delta(t_{k}))\big)\tau_1d\tau_1d\tau_21_{\{\bar X^\Delta(t_k)<0\}}\\
&=T^-_k+R^-_k.
\end{align*}
It follows from the above equations and the inequality
$\sup_{x\ne0}|{\partial^2 u\over\partial t^2}(t,x)|\le {C\over\sqrt t}\|u_0'\|_{3,1}$
that
$$
E^{x_0}\big|R^+_k+R^-_k\big|=E^{x_0}\Delta
t^2\Big|\int_{[0,1]^2}{\partial^2 u\over\partial t^2}\big(s_{k+1}+\tau_1\tau_2\Delta
t,\beta^{-1}(\bar X^\Delta(t_{k}))\big)\tau_1d\tau_1d\tau_2\Big|\le
{C\Delta t^2\over \sqrt{s_{k+1}}}\|u_0'\|_{3,1}.
$$
Therefore, we obtain
\begin{equation}\label{t-incr-est}
E^{x_0}T_k=\Delta
tE^{x_0}\Big[{\partial u\over\partial t}\big(s_{k+1},\beta^{-1}(\bar
X^\Delta(t_{k}))\big)\Big]+O\left({\Delta t^2\over
\sqrt{s_{k+1}}}\right).
\end{equation}

\underline{Step 2}: Estimate  for the space increment $S_k$: Let us
denote the following increments
\begin{align}
\triangle_{k+1}B&=B(t_{k+1})-B(t_k),\notag\\
\triangle_{k+1}\bar X^\Delta&=\theta\big(\bar
X^\Delta(t_k)\big)\triangle_{k+1}B,\notag\\
\tilde\triangle_{k+1}\bar Y^\Delta&={\triangle_{k+1}\bar
X^\Delta\over 1-\lambda}1_{\{\bar
X^\Delta(t_k)>0\}}+{\triangle_{k+1}\bar X^\Delta\over
\lambda}1_{\{\bar X^\Delta(t_k)<0\}},\label{tilde-triangle}
\end{align}
and events
\begin{align*}
\Omega_k^{++}&=\big\{\bar X^\Delta(t_k)>0,\bar
X^\Delta(t_{k+1})>0\big\},\quad\Omega_k^{+-}=\big\{\bar
X^\Delta(t_k)>0,\bar X^\Delta(t_{k+1})\le0\big\},\\
\Omega_k^{--}&=\big\{\bar X^\Delta(t_k)\le0,\bar
X^\Delta(t_{k+1})\le0\big\},\quad\Omega_k^{-+}=\big\{\bar
X^\Delta(t_k)\le0, \bar X^\Delta(t_{k+1})>0\big\}.
\end{align*}
Hence, by the definition of the function $\beta$, on
$\Omega^{++}_k$,
$$
\beta^{-1}\big(\bar X^\Delta(t_{k+1})\big)=\beta^{-1}\big(\bar
X^\Delta(t_{k})\big)+{\triangle_{k+1}\bar X^\Delta\over 1-\lambda}.
$$
This and Taylor expansion yield
\begin{align*}
&S_k1_{\Omega^{++}_k}\\
&={\triangle_{k+1}\bar X^\Delta\over
1-\lambda}{\partial u\over\partial x}\big(s_{k+1},\beta^{-1}(\bar
X^\Delta(t_{k}))\big)1_{\Omega^{++}_k}+{1\over2}{(\triangle_{k+1}\bar
X^\Delta)^2\over
(1-\lambda)^2}{\partial^2 u\over\partial x^2}\big(s_{k+1},\beta^{-1}(\bar
X^\Delta(t_{k}))\big)1_{\Omega^{++}_k}\\
&\quad+{1\over6}{(\triangle_{k+1}\bar X^\Delta)^3\over
(1-\lambda)^3}{\partial^3 u\over\partial x^3}\big(s_{k+1},\beta^{-1}(\bar
X^\Delta(t_{k}))\big)1_{\Omega^{++}_k}\\
&\quad+{(\triangle_{k+1}\bar X^\Delta)^4\over
(1-\lambda)^4}\int_{[0,1]^4}{\partial^4 u\over\partial x^4}\Big(s_{k+1},\beta^{-1}(\bar
X^\Delta(t_{k}))+\tau_1\tau_2\tau_3\tau_4{\triangle_{k+1}\bar
X^\Delta\over
1-\lambda}\Big)\tau_1\tau_2\tau_3d\tau_1...d\tau_41_{\Omega^{++}_k}\\
&=:S_k^{++1}+S_k^{++2}+S_k^{++3}+S_k^{++4}.
\end{align*}
Similarly,
\begin{align*}
&S_k1_{\Omega^{--}_k}\\
&={\triangle_{k+1}\bar X^\Delta\over
\lambda}{\partial u\over\partial x}\big(s_{k+1},\beta^{-1}(\bar
X^\Delta(t_{k}))\big)1_{\Omega^{--}_k}+{1\over2}{(\triangle_{k+1}\bar
X^\Delta)^2\over
\lambda^2}{\partial^2 u\over\partial x^2}\big(s_{k+1},\beta^{-1}(\bar
X^\Delta(t_{k}))\big)1_{\Omega^{--}_k}\\
&\quad+{1\over6}{(\triangle_{k+1}\bar X^\Delta)^3\over
\lambda^3}{\partial^3 u\over\partial x^3}\big(s_{k+1},\beta^{-1}(\bar
X^\Delta(t_{k}))\big)1_{\Omega^{--}_k}\\
&\quad+{(\triangle_{k+1}\bar X^\Delta)^4\over
\lambda^4}\int_{[0,1]^4}{\partial^4 u\over\partial x^4}\Big(s_{k+1},\beta^{-1}(\bar
X^\Delta(t_{k}))+\tau_1\tau_2\tau_3\tau_4{\triangle_{k+1}\bar
X^\Delta\over
\lambda}\Big)\tau_1\tau_2\tau_3d\tau_1...d\tau_41_{\Omega^{--}_k}\\
&=:S_k^{--1}+S_k^{--2}+S_k^{--3}+S_k^{--4}.
\end{align*}
Since
$\Omega^{++}_k\cup\Omega^{--}_k=\Omega-(\Omega^{+-}_k\cup\Omega^{-+}_k)$
and $\Omega^{+-}_k\cup\Omega^{-+}_k\in\sigma\{B(t):0\le t\le
t_{k+1}\}$, by \eqref{tilde-triangle} we get
\begin{align*}
&E^{x_0}(S^{++1}_k+S^{--1}_k)\\
&=E^{x_0}\Big[{\triangle_{k+1}\bar X^\Delta\over
1-\lambda}{\partial u\over\partial x}\big(s_{k+1},\beta^{-1}(\bar
X^\Delta(t_{k}))\big)1_{\Omega^{++}_k}+{\triangle_{k+1}\bar
X^\Delta\over \lambda}{\partial u\over\partial x}\big(s_{k+1},\beta^{-1}(\bar
X^\Delta(t_{k}))\big)1_{\Omega^{--}_k}\Big]\\
&\quad+E^{x_0}\big[\tilde\triangle_{k+1}\bar
Y^\Delta{\partial u\over\partial x}\big(s_{k+1},\beta^{-1}(\bar
X^\Delta(t_{k}))\big)1_{\{\Omega^{+-}_k\cup\Omega^{-+}_k\}}\big]\\
&\quad-E^{x_0}\big[\tilde\triangle_{k+1}\bar
Y^\Delta{\partial u\over\partial x}\big(s_{k+1},\beta^{-1}(\bar
X^\Delta(t_{k}))\big)1_{\{\Omega^{+-}_k\cup\Omega^{-+}_k\}}\big]\\
&=E^{x_0}\Big[\Big({\triangle_{k+1}\bar X^\Delta\over
1-\lambda}1_{\{\bar X^\Delta(t_k)>0\}}+{\triangle_{k+1}\bar
X^\Delta\over \lambda}1_{\{\bar
X^\Delta(t_k)<0\}}\Big){\partial u\over\partial x}\big(s_{k+1},\beta^{-1}(\bar
X^\Delta(t_{k}))\big)\Big]\\
&\quad-E^{x_0}\big[\tilde\triangle_{k+1}\bar
Y^\Delta{\partial u\over\partial x}\big(s_{k+1},\beta^{-1}(\bar
X^\Delta(t_{k}))\big)1_{\{\Omega^{+-}_k\cup\Omega^{-+}_k\}}\big]\\
&=-E^{x_0}\big[\tilde\triangle_{k+1}\bar
Y^\Delta{\partial u\over\partial x}\big(s_{k+1},\beta^{-1}(\bar
X^\Delta(t_{k}))\big)1_{\{\Omega^{+-}_k\cup\Omega^{-+}_k\}}\big].
\end{align*}
By the similar way and notice that
$E\big((\triangle_{k+1}B)^2\big|\,B(t),\, 0\le t\le t_k\big)=\Delta
t$, we obtain
\begin{align*}
&E^{x_0}(S^{++2}_k+S^{--2}_k)\\
&={1\over2}E^{x_0}\Big\{\Big[\Big({\triangle_{k+1}\bar X^\Delta\over
1-\lambda}\Big)^21_{\Omega^{++}_k}+\Big({\triangle_{k+1}\bar
X^\Delta\over
\lambda}\Big)^21_{\Omega^{--}_k}\Big]{\partial^2 u\over\partial x^2}\big(s_{k+1},\beta^{-1}(\bar
X^\Delta(t_{k}))\big)\Big\}\\
&={1\over2}E^{x_0}\Big\{\Big[\Big({\theta(\bar
X^\Delta(t_k))\triangle_{k+1}B\over 1-\lambda}\Big)^21_{\{\bar
X^\Delta(t_k)>0\}}+\Big({\theta(\bar
X^\Delta(t_k))\triangle_{k+1}B\over \lambda}\Big)^21_{\{\bar
X^\Delta(t_k)<0\}}\Big]\\
&\quad\times{\partial^2 u\over\partial x^2}\big(s_{k+1},\beta^{-1}(\bar
X^\Delta(t_{k}))\big)\Big\}-{1\over2}E^{x_0}\big[\big(\tilde\triangle_{k+1}\bar
Y^\Delta\big)^2{\partial^2 u\over\partial x^2}\big(s_{k+1},\beta^{-1}(\bar
X^\Delta(t_{k}))\big)1_{\{\Omega^{+-}_k\cup\Omega^{-+}_k\}}\big]\\
&={\Delta t\over2}E^{x_0}\Big\{\Big[D^+1_{\{\bar
X^\Delta(t_k)>0\}}+D^-1_{\{\bar
X^\Delta(t_k)<0\}}\Big]{\partial^2 u\over\partial x^2}\big(s_{k+1},\beta^{-1}(\bar
X^\Delta(t_{k}))\big)\Big\}\\
&\quad-{1\over2}E^{x_0}\big[\big(\tilde\triangle_{k+1}\bar
Y^\Delta\big)^2{\partial^2 u\over\partial x^2}\big(s_{k+1},\beta^{-1}(\bar
X^\Delta(t_{k}))\big)1_{\{\Omega^{+-}_k\cup\Omega^{-+}_k\}}\big]\\
&={\Delta t\over2}E^{x_0}\Big[D\big(\beta^{-1}(\bar
X^\Delta(t_k))\big){\partial^2 u\over\partial x^2}\big(s_{k+1},\beta^{-1}(\bar
X^\Delta(t_{k}))\big)\Big]\\
&\quad-{1\over2}E^{x_0}\big[\big(\tilde\triangle_{k+1}\bar
Y^\Delta\big)^2{\partial^2 u\over\partial x^2}\big(s_{k+1},\beta^{-1}(\bar
X^\Delta(t_{k}))\big)1_{\{\Omega^{+-}_k\cup\Omega^{-+}_k\}}\big].
\end{align*}
Since $E\big((\triangle_{k+1}B)^3\big|\,B(t),\, 0\le t\le
t_k\big)=0$,
\begin{align*}
&E^{x_0}(S^{++3}_k+S^{--3}_k)=-{1\over6}E^{x_0}\big[\big(\tilde\triangle_{k+1}\bar
Y^\Delta\big)^3{\partial^3 u\over\partial x^3}\big(s_{k+1},\beta^{-1}(\bar
X^\Delta(t_{k}))\big)1_{\{\Omega^{+-}_k\cup\Omega^{-+}_k\}}\big].
\end{align*}
Next, according to Theorem 2,
$$
E^{x_0}\big|S^{++4}_k+S^{--4}_k\big|\le{C\Delta t^2\over\sqrt{
s_{k+1}}}\|u_0'\|_{3,1}.
$$

Combining the calculations above, we arrive at
\begin{align}
E^{x_0}S_k&=E^{x_0}\tilde{\mathcal{L}}u\big(s_{k+1},\beta^{-1}(\bar
X^\Delta(t_{k}))\big)\Delta
t\notag\\
&\quad\quad+E^{x_0}\Big\{\Big[S_k-\tilde\triangle_{k+1}\bar
Y^\Delta{\partial u\over\partial x}\big(s_{k+1},\beta^{-1}(\bar
X^\Delta(t_{k}))\big)\Big]1_{\{\Omega^{+-}_k\cup\Omega^{-+}_k\}}\notag\\
&\quad\quad-{1\over2}\big(\tilde\triangle_{k+1}\bar
Y^\Delta\big)^2{\partial^2 u\over\partial x^2}\big(s_{k+1},\beta^{-1}(\bar
X^\Delta(t_{k}))\big)1_{\{\Omega^{+-}_k\cup\Omega^{-+}_k\}}\notag\\
&\quad\quad-{1\over6}\big(\tilde\triangle_{k+1}\bar
Y^\Delta\big)^3{\partial^3 u\over\partial x^3}\big(s_{k+1},\beta^{-1}(\bar
X^\Delta(t_{k}))\big)1_{\{\Omega^{+-}_k\cup\Omega^{-+}_k\}}\Big\}+O\Big({\Delta
t^2\over\sqrt{
s_{k+1}}}\Big)\notag\\
&=: E^{x_0}\tilde{\mathcal{L}}u\big(s_{k+1},\beta^{-1}(\bar
X^\Delta(t_{k}))\big)\Delta t+E^{x_0}\mathcal{R}_k+O\Big({\Delta
t^2\over\sqrt{ s_{k+1}}}\Big).\label{ES_k}
\end{align}
We now estimate the remaining term $E^{x_0}\mathcal{R}_k$.

\underline{Step 3}: Estimate $E^{x_0}\mathcal{R}_k$:

For any fixed $\epsilon\in(0,1/2)$, we will show that
\begin{align}
|E^{x_0}\mathcal{R}_k|&\le{C\Delta
t^{1-2\epsilon}\over\sqrt{s_{k+1}}}\|u_0'\|_{1,1}P^{x_0}\Big\{\big|\bar
X^\Delta(t_k)\big|\le \Delta t^{{1\over2}-\epsilon}\Big\}\notag\\
&\quad\quad\quad+{C\Delta
t^{{3\over2}-3\epsilon}\over\sqrt{s_{k+1}}}\|u_0'\|_{3,1}P^{x_0}\Big\{\big|\bar
X^\Delta(t_k)\big|\le \Delta
t^{{1\over2}-\epsilon}\Big\}.\label{ER_k}
\end{align}

Notice that we can rewrite $\Omega^{+-}_k$ as
\begin{align*}
\Omega^{+-}_k =&\,\,\Big\{\bar X^\Delta(t_k)\ge \Delta
t^{{1\over2}-\epsilon},\,\,\bar
X^\Delta(t_{k+1})\le0\Big\}\cup\Big\{0<\bar X^\Delta(t_k)\le \Delta
t^{{1\over2}-\epsilon},\,\,\bar X^\Delta(t_{k+1})\le-\Delta
t^{{1\over2}-\epsilon}\Big\}\\
&\cup\Big\{0<\bar X^\Delta(t_k)\le \Delta
t^{{1\over2}-\epsilon},\,\,-\Delta t^{{1\over2}-\epsilon}\le\bar
X^\Delta(t_{k+1})\le0\Big\}.
\end{align*}
Since $\bar X^\Delta(t_{k+1})=\bar X^\Delta(t_k)+\theta\big(\bar
X^\Delta(t_k)\big)B(\Delta t)$, it follows that
$$
P\Big\{\bar X^\Delta(t_k)\ge \Delta t^{{1\over2}-\epsilon},\,\,\bar
X^\Delta(t_{k+1})\le0\Big\}\le P\Big\{(1-\lambda)\sqrt{D^+}B(\Delta
t)\ge \Delta t^{{1\over2}-\epsilon}\Big\}\le C\exp\{-CM^\epsilon\}.
$$
Similarly,
$$
P\Big\{0<\bar X^\Delta(t_k)\le \Delta
t^{{1\over2}-\epsilon},\,\,\bar X^\Delta(t_{k+1})\le-\Delta
t^{{1\over2}-\epsilon}\Big\}\le C\exp\{-CM^\epsilon\}.
$$
We can proceed analogously on the event $\Omega^{-+}_k$. This leads
us to limit to consider the events
\begin{align*}
\hat\Omega^{+-}_k&=\Big\{0<\bar X^\Delta(t_k)\le \Delta
t^{{1\over2}-\epsilon},\,\,-\Delta t^{{1\over2}-\epsilon}\le\bar
X^\Delta(t_{k+1})\le0\Big\},\\
\hat\Omega^{-+}_k&=\Big\{-\Delta t^{{1\over2}-\epsilon}\le \bar
X^\Delta(t_k)<0,\,\,0\le\bar X^\Delta(t_{k+1})\le \Delta
t^{{1\over2}-\epsilon}\Big\}.
\end{align*}
Note that, by \eqref{tilde-triangle}, $\tilde \triangle_{k+1}\bar
Y^\Delta\le C\Delta t^{1/2-\epsilon}$ on these sets. Hence, we have
\begin{align*}
&\Big|E^{x_0}\Big[\big(\tilde\triangle_{k+1}\bar
Y^\Delta\big)^2{\partial^2 u\over\partial
x^2}\big(s_{k+1},\beta^{-1}(\bar
X^\Delta(t_{k}))\big)1_{\{\hat\Omega^{+-}_k\cup\hat\Omega^{-+}_k\}}\Big]\Big|\\
&\quad\le  {C\Delta
t^{1-2\epsilon}\over\sqrt{s_{k+1}}}\|u_0'\|_{1,1}P^{x_0}\Big\{\big|\bar
X^\Delta(t_k)\big|\le \Delta
t^{{1\over2}-\epsilon}\Big\},\\
&\Big|E^{x_0}\Big[\big(\tilde\triangle_{k+1}\bar
Y^\Delta\big)^3{\partial^3 u\over\partial x^3}\big(s_{k+1},\beta^{-1}(\bar
X^\Delta(t_{k}))\big)1_{\{\hat\Omega^{+-}_k\cup\hat\Omega^{-+}_k\}}\Big]\Big|\\
&\quad\le {C\Delta
t^{{3\over2}-3\epsilon}\over\sqrt{s_{k+1}}}\|u_0'\|_{3,1}P^{x_0}\Big\{\big|\bar
X^\Delta(t_k)\big|\le \Delta t^{{1\over2}-\epsilon}\Big\}.
\end{align*}
Therefore, it suffices to show that
\begin{align}
&\Big|E^{x_0}\Big[S_k-\tilde\triangle_{k+1}\bar
Y^\Delta{\partial u\over\partial x}\big(s_{k+1},\beta^{-1}(\bar
X^\Delta(t_{k}))\big)\Big]1_{\{\hat\Omega^{+-}_k\cup\hat\Omega^{-+}_k\}}\Big|\notag\\
&\quad\le{C\Delta
t^{1-2\epsilon}\over\sqrt{s_{k+1}}}\|u_0'\|_{1,1}P^{x_0}\Big\{\big|\bar
X^\Delta(t_k)\big|\le \Delta
t^{{1\over2}-\epsilon}\Big\}.\label{R_k-last-term}
\end{align}

\underline{Step 4}: Proof of \eqref{R_k-last-term}

Note that on the set $\hat\Omega^{+-}_k$, $\bar X^\Delta(t_{k})$ and
$\bar X^\Delta(t_{k+1})$ are both closed to $0$. In addition, $\bar
X^\Delta(t_{k})>0$ and $\bar X^\Delta(t_{k+1})<0$. Thus, we have
$$
\beta^{-1}\big(\bar X^\Delta(t_{k})\big)={\bar X^\Delta(t_{k})\over
1-\lambda},\quad \beta^{-1}\big(\bar X^\Delta(t_{k+1})\big)={\bar
X^\Delta(t_{k+1})\over \lambda}.
$$
Since $u(t,x)$ is continuous at $0$, we get
\begin{align*}
&E^{x_0}\Big[S_k-\tilde\triangle_{k+1}\bar
Y^\Delta{\partial u\over\partial x}\big(s_{k+1},\beta^{-1}(\bar
X^\Delta(t_{k}))\big)\Big]1_{\hat\Omega^{+-}_k}\\
&={1\over\lambda}E^{x_0}\Big[\bar
X^\Delta(t_{k+1}){\partial u\over\partial x}\big(s_{k+1},0^-\big)1_{\hat\Omega^{+-}_k}\Big]-{1\over1-\lambda}E^{x_0}\Big[\bar
X^\Delta(t_{k}){\partial u\over\partial x}\big(s_{k+1},0^+\big)1_{\hat\Omega^{+-}_k}\Big]\\
&\quad-E^{x_0}\Big[\tilde\triangle_{k+1}\bar
Y^\Delta{\partial u\over\partial x}\big(s_{k+1},0^+\big)1_{\hat\Omega^{+-}_k}\Big]\\
&\quad+E^{x_0}\Big[\Big(\big(\beta^{-1}\big(\bar
X^\Delta(t_{k+1})\big)\big)^2\int_{[0,1]^2}{\partial^2 u\over\partial x^2}\big(s_{k+1},\tau_1\tau_2\beta^{-1}\big(\bar
X^\Delta(t_{k+1})\big)\big)\tau_1d\tau_1d\tau_2\\
&\quad\quad-\big(\beta^{-1}\big(\bar
X^\Delta(t_{k})\big)\big)^2\int_{[0,1]^2}{\partial^2 u\over\partial x^2}\big(s_{k+1},\tau_1\tau_2\beta^{-1}\big(\bar
X^\Delta(t_{k})\big)\big)\tau_1d\tau_1d\tau_2\\
&\quad\quad-\tilde\triangle_{k+1}\bar Y^\Delta\beta^{-1}(\bar
X^\Delta(t_{k}))\int_0^1{\partial^2 u\over\partial x^2}\big(s_{k+1},\tau_1\beta^{-1}\big(\bar
X^\Delta(t_{k})\big)\big)d\tau_1\Big)1_{\hat\Omega^{+-}_k}\Big].
\end{align*}
On one hand, since $|\beta^{-1}\big(\bar X^\Delta(t_{k})\big)|$ and
$|\beta^{-1}\big(\bar X^\Delta(t_{k+1})\big)|\le C\Delta
t^{1/2-\epsilon}$ on $\hat\Omega^{+-}_k$, the absolute value of the
last expectation in the right-hand side can be bounded from above by
$$
{C\Delta
t^{1-2\epsilon}\over\sqrt{s_{k+1}}}\|u_0'\|_{1,1}P^{x_0}\Big(\big|\bar
X^\Delta(t_k)\big|\le \Delta t^{{1\over2}-\epsilon}\Big).
$$
On the other hand, by \eqref{tilde-triangle}, we can rewrite the sum
of the first three terms in the right hand side as
\begin{align}
&E^{x_0}\Big\{\Big[{\bar
X^\Delta(t_{k+1})\over\lambda}{\partial u\over\partial x}\big(s_{k+1},0^-\big)-{\bar
X^\Delta(t_{k})\over1-\lambda}{\partial u\over\partial x}\big(s_{k+1},0^+\big)\notag\\
&\quad\quad-{\bar
X^\Delta(t_{k+1})-\bar X^\Delta(t_{k})\over
1-\lambda}{\partial u\over\partial x}\big(s_{k+1},0^+\big)\Big]1_{\hat\Omega^{+-}_k}\Big\}\notag\\
=&E^{x_0}\Big\{\Big[{1\over\lambda}{\partial u\over\partial x}\big(s_{k+1},0^-\big)-{1\over
1-\lambda}{\partial u\over\partial x}\big(s_{k+1},0^+\big)\Big]\bar
X^\Delta(t_{k+1})1_{\hat\Omega^{+-}_k}\Big\}=0\notag
\end{align}
by the transmission condition. By the same way, we can proceed for
the set $\hat\Omega^{-+}_k$. Then \eqref{R_k-last-term} follows and
 we obtain \eqref{ER_k} as a consequence.

Next, combining \eqref{eps-est}-\eqref{t-incr-est}, \eqref{ES_k},
\eqref{ER_k} we arrive at
\begin{align*}
\epsilon_T^{x_0}&\le C\sum_{k=0}^{M-2}\Big({\Delta
t^{1-2\epsilon}\over\sqrt{s_{k+1}}}\|u_0'\|_{1,1}+{\Delta
t^{{3\over2}-3\epsilon}\over\sqrt{s_{k+1}}}\|u_0'\|_{3,1}\Big)
P^{x_0}\Big\{\big|\bar X^\Delta(t_k)\big|\le \Delta
t^{{1\over2}-\epsilon}\Big\}\\
&\quad\quad+C\|u_0'\|_{1,1}\Delta t^{1\over2}+C\|u_0'\|_{3,1}\Delta
t.
\end{align*}
By Theorem \ref{theorem:small-ball} it follows that there is a
constant $M_0$ such that for $M\ge M_0$, the right hand side in the
above inequality is bounded above by $C\|u_0'\|_{1,1}\Delta
t^{(1-\epsilon)/2}+ C\|u_0'\|_{1,1}\Delta
t^{1/2}+C\|u_0'\|_{3,1}\Delta t$. This proves the theorem.\qed

{\bf Proof of \thmref{thm:num-r-est-2}} Let $u_0$ be any function in
$\mathcal W$, and $0<\delta<1$ we will first approximate $u_0$ by a
function $u_\delta$ in $\mathcal{W}^4$ such that
$$
\begin{cases}
u_\delta(x)=u_0(x)&\text{for $|x|>2\delta$},\\
u_\delta(x)=u_0(0)&\text{for $-\delta\le x\le\delta$}
\end{cases}\quad
\text{and}\quad
\begin{cases}
u_\delta^{(i)}(2\delta)=u_0^{(i)}(2\delta),\\
u_\delta^{(i)}(-2\delta)=u_0^{(i)}(-2\delta),\\
u_\delta^{(i)}(-\delta)=u_\delta^{(i)}(\delta)=0
\end{cases}
\quad \text{for }1\le i\le4.
$$
For $\delta\le x\le2\delta$ denote
\begin{align*}
u_\delta(x)=&u_0(0)+\big(u_0(2\delta)-u_0(0)\big)p_0\big({x-\delta\over
\delta}\big)+\delta u_0^{(1)}(2\delta)p_1\big({x-\delta\over
\delta}\big)\\
&\quad\quad+\delta^2u_0^{(2)}(2\delta)p_2\big({x-\delta\over
\delta}\big)+\delta^3u_0^{(3)}(2\delta)p_3\big({x-\delta\over
\delta}\big)+\delta^4u_0^{(4)}(2\delta)p_4\big({x-\delta\over
\delta}\big),
\end{align*}
and for $-2\delta\le x\le-\delta$ denote
\begin{align*}
u_\delta(x)=&u_0(0)+\big(u_0(-2\delta)-u_0(0)\big)p_0\big(-{x+\delta\over
\delta}\big)-\delta u_0^{(1)}(-2\delta)p_1\big(-{x+\delta\over
\delta}\big)\\
&+\delta^2u_0^{(2)}(-2\delta)p_2\big(-{x+\delta\over
\delta}\big)-\delta^3u_0^{(3)}(-2\delta)p_3\big(-{x+\delta\over
\delta}\big)+\delta^4u_0^{(4)}(-2\delta)p_4\big(-{x+\delta\over
\delta}\big),
\end{align*}
where $p_j(x)$, $0\le j\le4$, are polynomials on $[0,1]$ satisfying
the following interpolation problem
$$
p^{(i)}_j(0)=0,\quad p^{(i)}_j(1)=\delta_{ij} \text{ for }0\le
i,j\le4,
$$
where $\delta_{ij}$ is the Kronecker symbol. We can choose
\begin{align*}
p_0(x)&=x^5(70x^4-315x^3+540x^2-420x+126),\\
p_1(x)&=x^5(1-x)(35x^3-120x^2+140x-56),\\
p_2(x)&={1\over2}x^5(1-x)^2(15x^2-35x+21),\\
p_3(x)&={1\over6}x^5(1-x)^3(5x-6),\\
p_4(x)&={1\over24}x^5(1-x)^4,
\end{align*}
which satisfy
$$
\Big\|p^{(i)}_j\Big({\cdot-\delta\over\delta}\Big)\Big\|_{L^1([\delta,2\delta])}+\Big\|p^{(i)}_j\Big(-{\cdot+\delta\over\delta}\Big)\Big\|_{L^1([-2\delta,-\delta])}\le
C\delta^{1-i},\quad \forall\,i=1,...,4
$$
and imply
$$
\|u_0-u_\delta\|_1=\int_{-2\delta}^{2\delta}|u_0(y)-u_0(0)+u_0(0)-u_\delta(y)|dy\le
C\delta^2.
$$
Similarly, there is a constant only depends on $u_0$ such that
\begin{equation}\label{f-derv-est}
\|u_0^{(i)}-u_\delta^{(i)}\|_1\le C\delta^{2-i}\quad\forall\,
i=1,...,4.
\end{equation}

Next, we will use the approximation $u_\delta$ of $u_0$ to estimate
the error $\epsilon^x_T$. We have
\begin{align}
\epsilon^x_T&=\big|E^xu_0\big(Y^{(\alpha)}(T)\big)-E^xu_0\big(\bar
Y^{\Delta}(T)\big)\big|\notag\\
&\le\big|E^xu_0\big(Y^{(\alpha)}(T)\big)-E^xu_\delta\big(
Y^{(\alpha)}(T)\big)\big|+\big|E^xu_\delta\big(Y^{(\alpha)}(T)\big)-E^xu_\delta\big(\bar
Y^{\Delta}(T)\big)\big|\notag\\
&\quad\quad+\big|E^xu_\delta\big(\bar
Y^{\Delta}(T)\big)-E^xu_0\big(\bar
Y^{\Delta}(T)\big)\big|\notag\\
&\le I_1(\delta)+I_2(\delta)+I_3(\delta).\label{I_123-eqn}
\end{align}
It follows from \eqref{density-est} that
\begin{equation}\label{I_1}
I_1(\delta)\le\int_{-\infty}^\infty|u_0(y)-u_\delta(y)|q^{(\alpha)}(T,x,y)dy\le
C\delta\int_{-2\delta}^{2\delta}q^{(\alpha)}(T,x,y)dy\le C\delta^2.
\end{equation}
By virtue of \thmref{thm:num-r-est} and \eqref{f-derv-est}, there
are constants $C$ depending only on $u$ such that
\begin{equation}\label{I_2}
I_2(\delta) \le C \|u_\delta'\|_{1,1}\Delta
t^{1/2-\epsilon}+C\|u_\delta'\|_{1,1}\Delta
t^{1/2}+\|u_\delta'\|_{3,1}\Delta t^{1-\epsilon}\le C\Delta
t^{1/2-\epsilon}+C\delta^{-2}\Delta t^{1-\epsilon}.
\end{equation}

To proceed, we need to estimate $I_3(\delta)$. Let $\chi$ be a
function in $C^\infty(\mathbb{R})$ such that
$$
\chi(x)\ge1\quad\forall\,
|x|\le1,\quad\text{and}\quad\chi^{(i)}(0)=0,\quad\forall\,i=1,...,4.
$$
Denote $\chi_\delta(x)=\chi({x\over2\delta})$ then
$\chi_\delta\ge\mathbf{1}_{[-2\delta,2\delta]}$,
$\text{supp}(\chi_\delta)=[-4\delta,4\delta]$, and it is clear that
$\chi,\chi_\delta\in\mathcal{W}^2$. In addition
$$
\|\chi_\delta'\|_{1,1}\le{C\over\delta}\quad\text{and}\quad\|\chi_\delta'\|_{3,1}\le{C\over\delta^3}.
$$
Thus, \thmref{thm:num-r-est} and \eqref{density-est} yield
\begin{align*}
P^x\big(|\bar Y^\Delta(T)|\le2\delta\big)&\le E^x\chi_\delta(\bar
Y^\Delta(T))\\
&\le \big|E^x\chi_\delta(\bar Y^\Delta(T))-E^x\chi_\delta(
Y^{(\alpha)}(T))\big|+E^x\chi_\delta(Y^{(\alpha)}(T))\\
&\le C\Delta t^{1-\epsilon\over2}\|\chi'\|_{1,1}+C\Delta
t^{1-\epsilon}\|\chi'\|_{3,1}+\int_{-4\delta}^{4\delta}\chi_\delta(y)q^{(\alpha)}(t,x,y)dy\\
&\le C{\Delta t^{1-\epsilon\over2}\over\delta}+C{\Delta
t^{1-\epsilon}\over\delta^3}+{C\over\sqrt T}\|\chi\|_\infty\delta
\end{align*}
and
\begin{equation}\label{I_3}
I_3(\delta)\le C\delta P^x\big(|\bar Y^\Delta(T)|\le2\delta\big)\le
C\Delta t^{1-\epsilon\over2}+C{\Delta
t^{1-\epsilon}\over\delta^2}+{C\over\sqrt T}\|\chi\|_\infty\delta^2.
\end{equation}
By choosing the optimal value of the type $\Delta t^\gamma$ of
$\delta$ (with $\gamma=1/4$), and combining
\eqref{I_123-eqn}-\eqref{I_3}, we obtain \eqref{num-r-est}.
 \qed

\section{Numerical Examples of Stochastic Method}
\label{sec:nums}
We again consider the initial profile given by \eqref{eq:example}.
Note that $u_0$ given in \eqref{eq:example} satisfies the conditions of Theorem \ref{thm:num-r-est-2}.
Numerical simulations are provided for
two values of $D^+=\{10,100\}$ while holding $D^-=1$.
We consider scenarios with
$\lambda=\{\lambda^*, \lambda^\#\}$.  Simulations of \eqref{eq:diff} with \eqref{eq:example} are shown in Figure \ref{fig:solution} above, along with the deterministic method.

For each choice of $\lambda$ and $D^+$ above, the error is computed between the stochastic numerical approximation and the expected value solution formula at specific points in space: $\{-1.5, 0, 2.5\}$.  To reduce the computational time, the largest stable time step was used, however the computations each involved over ten million sample paths.  
The absolute value of the error is plotted versus $dt:=h_n$ on a log-log plot in Figure \ref{fig:SDEerror}
demonstrating between zero and half order accuracy in each case, as predicted by Theorem \ref{thm:num-r-est-2}.

\begin{figure}[!t]
  \begin{center}
    \begin{tabular}{c}
      \mbox{\includegraphics[width = 0.5\textwidth,angle=0]{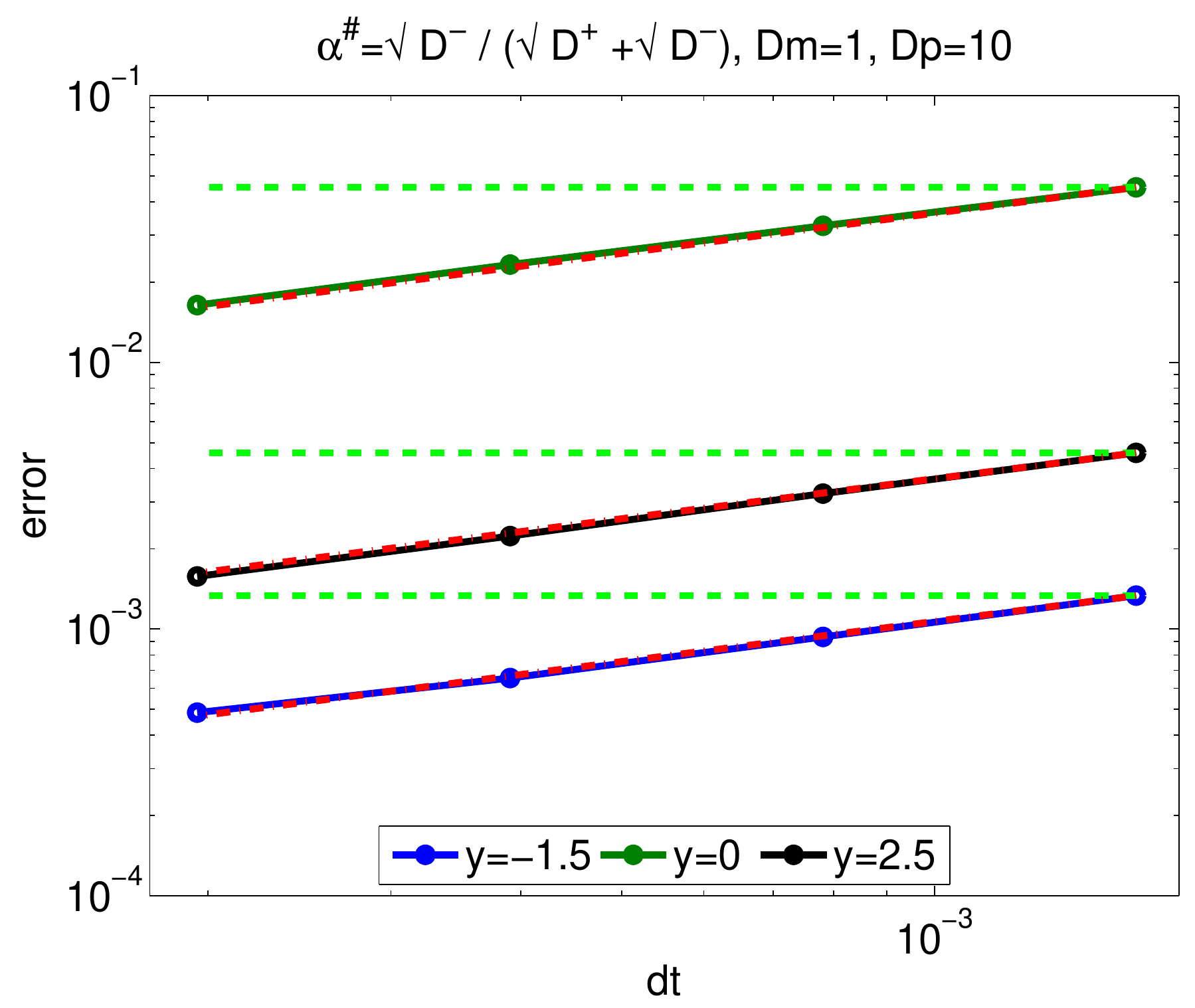}} \
      \mbox{\includegraphics[width=0.5\textwidth,angle=0]{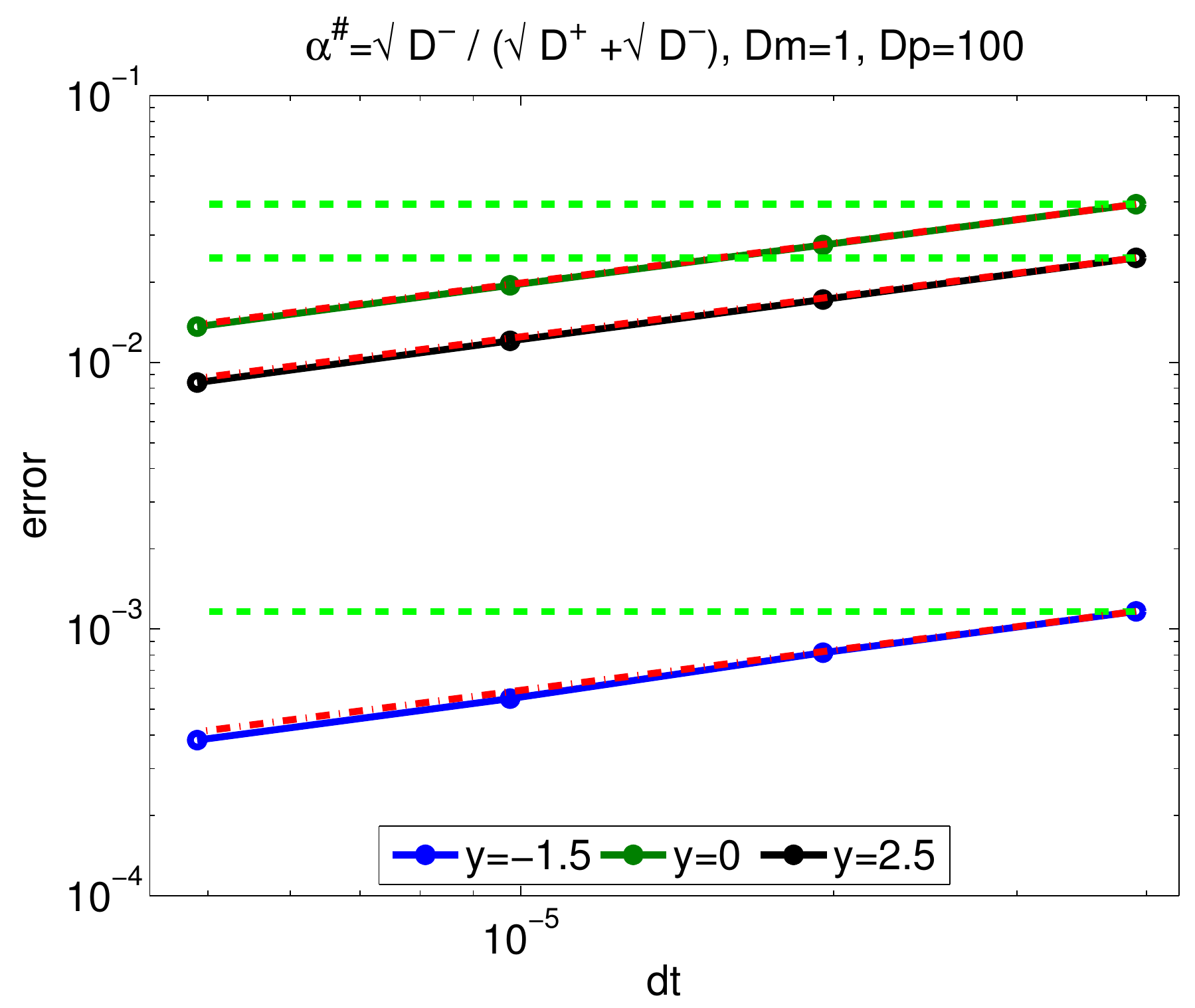}} \\
      \mbox{\includegraphics[width = 0.5\textwidth, angle=0]{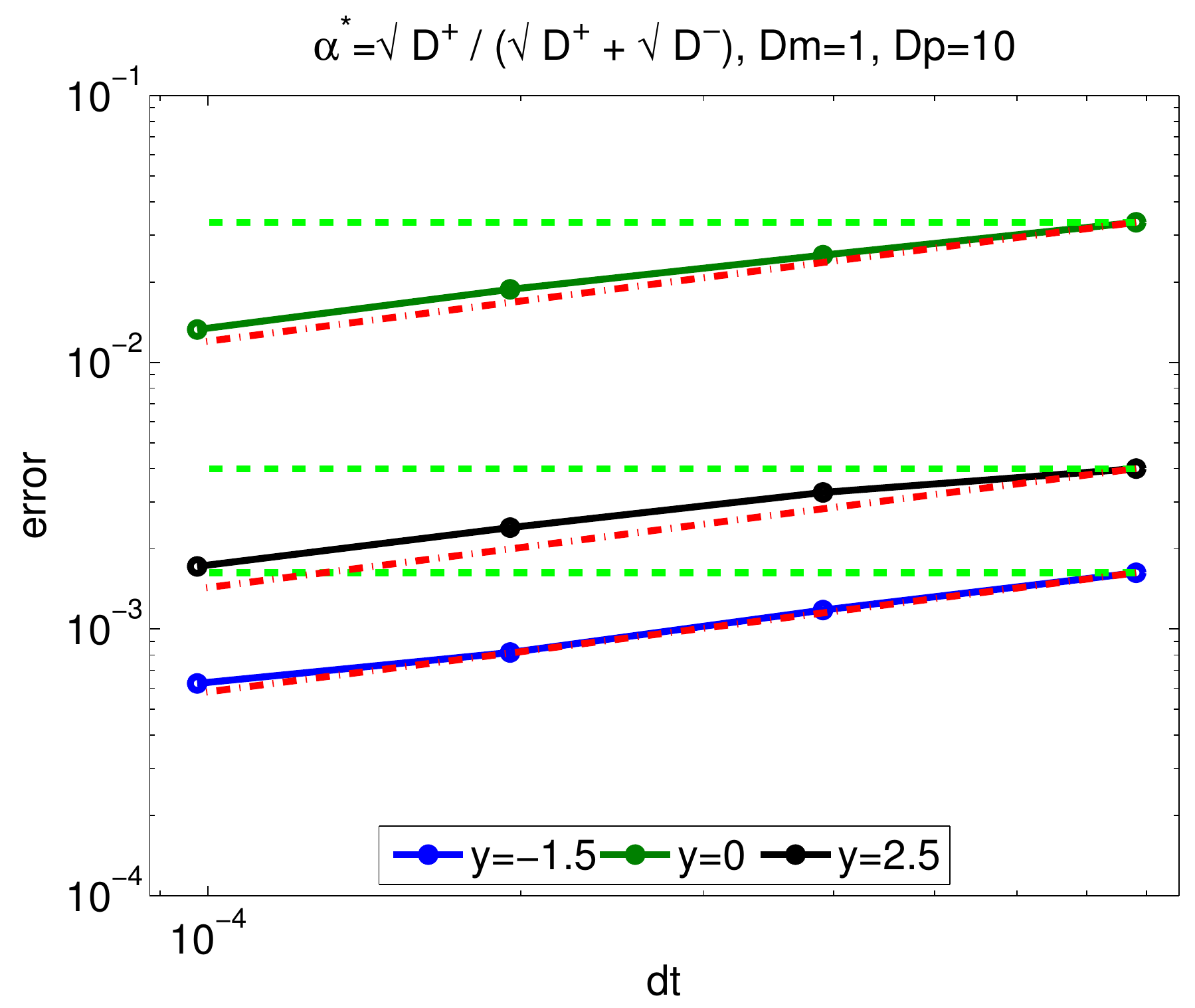}} \
      \mbox{\includegraphics[width=0.5\textwidth,angle=0]{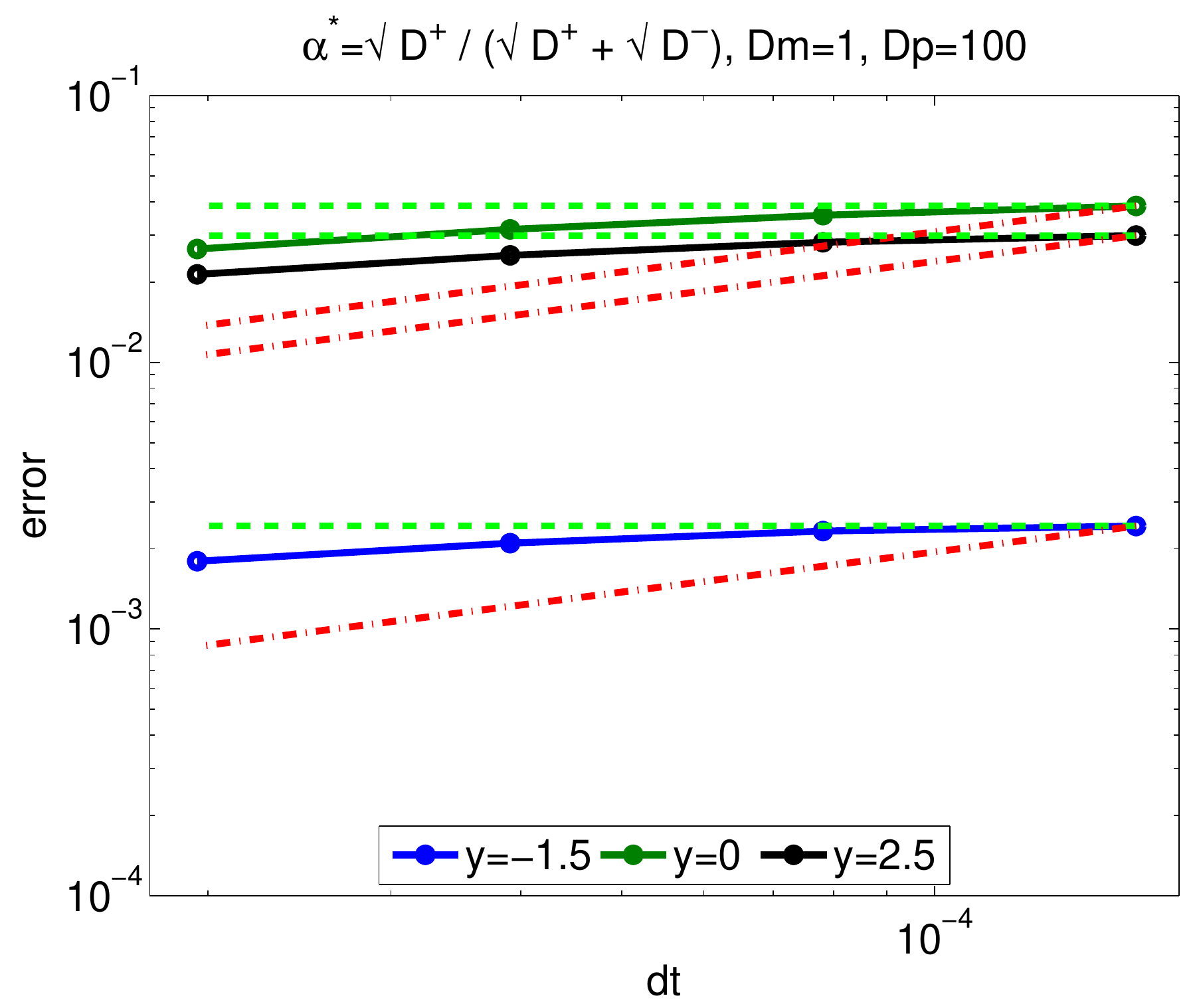}} \\
  \end{tabular}
  \end{center}\caption{Error for the SDE-BE method demonstrating half order convergence.}\label{fig:SDEerror}
\end{figure}


\section{Conclusions}

In this paper we have introduced a natural one parameter family of
possible interface conditions coupled to a diffusion problem, with
discontinuous diffusion coefficient, in one spatial dimension. We
then presented a reformation of the deterministic and the stochastic
models which naturally allow the application of numerical
discretization methods.  In particular, we chose to use the immersed
interface finite element method for the PDE.  We extended standard
energy estimates to show stability of the approach and derived error
estimates for the backward Euler case.  We demonstrated expected
convergence via numerical examples.  Finally, we introduced the
corresponding SDE and developed an Euler-Maruyama scheme applicable to any
one of the interface conditions.  We proved existence, uniqueness
and convergence of the numerical method under mild assumptions.
Again, the rates of convergence were verified by numerical examples.



\section{Appendix}
In this section we provide some properties of the first passage time
densities of one dimensional uniformly elliptic diffusion processes
which imply the estimates for the density $r_0^x(s)$ of the first
passage time before time $T$ at point $0$ of the process
$Y^{(\alpha)}$. In addition, we will present an estimate for the number of visits of small balls
by the Euler scheme.

The following Lemma is a combination of Theorem A.1
and Lemma A.5 in \cite{MartinezT12}.
\begin{lem}\label{lem:r_0^x-int}
Let $\gamma$ and $\mu$ be real valued functions such that $\gamma\in
C^{k+2}_b(\mathbb{R})$ and $\mu\in C^{k+1}_b(\mathbb{R})$ for some
non-negative integer $k$. Suppose that there is a positive constant
$\lambda$ such that $\gamma(x)>\lambda$ for all $x$ and $Z(t)$
satisfies
$$
Z(t)=Z_0+\int_0^t\mu(Z(s))ds+\int_0^t\gamma(Z(s))dB(s).
$$
a, If $T>0$ and $x\ne0$ then under $P^x$, the first passage time of
$Z(t)$ at point $0$ before time $T$, $\tau_0(Z)\wedge T$, has a
smooth density $r_0^x(s)$ which is of class $C^k((0,T])$.\\
b, In addition, if $k\ge2$ then for all $0\le\alpha<1$ there exists
a constant $C$ such that
$$
\int_0^t{1\over s^\alpha}r_0^x(t-s)ds\le{C\over
t^\alpha}\quad\text{for all $0\le t\le T$ and $x\ne0$}.
$$
\end{lem}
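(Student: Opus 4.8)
The plan is to prove the two assertions separately, following the route of Theorem~A.1 and Lemma~A.5 of \cite{MartinezT12}: part~(a) is a regularity statement for the transition density of the diffusion $Z$ killed at the origin, and part~(b) is deduced from (a) together with an elementary splitting of the convolution integral. The one point needing a little care beyond \cite{MartinezT12} is to keep the constants uniform in the starting point $x\neq0$; since $\gamma,\mu$ are bounded and $\gamma\ge\lambda>0$, this uniformity comes for free from Aronson-type bounds.

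\emph{Part (a).} By the symmetry $x\mapsto-x$ it suffices to treat $x>0$ and $\tau_0=\inf\{t>0:Z(t)=0\}$. First I would introduce the sub-Markovian transition density $p^0(t,x,y)$ of $Z$ killed upon exiting $(0,\infty)$; it solves the forward equation $\partial_t p^0=\tfrac12\partial_{yy}\big(\gamma^2 p^0\big)-\partial_y\big(\mu p^0\big)$ on $(0,\infty)$ with Dirichlet condition $p^0(t,x,0)=0$. Because $\gamma^2\in C^{k+2}_b$, $\mu\in C^{k+1}_b$ and the equation is uniformly parabolic, interior and boundary Schauder estimates show that $p^0(t,x,\cdot)$ is smooth up to the boundary $y=0$, with the order of regularity dictated by that of the coefficients. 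Differentiating $P^x(\tau_0>t)=\int_0^\infty p^0(t,x,y)\,dy$ in $t$ and using $p^0(t,x,0)=0$ then identifies the first-passage density as the boundary flux
\[
r_0^x(t)=\tfrac12\,\gamma^2(0)\,\partial_y p^0(t,x,0^+),
\]
which inherits the claimed $C^k((0,T])$ regularity in $t$. (One could alternatively reduce to a constant-coefficient problem via a Lamperti change of variable and a Girsanov transformation over $[0,T]$; in fact in the application in this paper the process is already a Brownian motion with constant diffusivity $D^+$ or $D^-$ before its first visit to $0$, so that $r_0^x$ is literally an inverse-Gaussian density and both parts are immediate.)

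\emph{Part (b).} The boundary gradient estimate for $p^0$ --- uniform in $x$ because the coefficients are bounded and $\gamma\ge\lambda$ --- yields a constant $C$, independent of $x\neq0$, with
\[
r_0^x(s)\le\frac{C|x|}{s^{3/2}}\,e^{-cx^2/s}\le\frac{C}{s},\qquad 0<s\le T,
\]
while $P^x(\tau_0<T)=\int_0^T r_0^x(s)\,ds\le1$. Given these two facts I would split $\int_0^t s^{-\alpha}r_0^x(t-s)\,ds$ at $s=t/2$. On $\{s<t/2\}$ the argument $t-s$ lies in $(t/2,t)$, so $r_0^x(t-s)\le 2C/t$, and $\int_0^{t/2}s^{-\alpha}\,ds=(t/2)^{1-\alpha}/(1-\alpha)$, giving a contribution at most $C_\alpha t^{-\alpha}$. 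On $\{s>t/2\}$ one has $s^{-\alpha}\le 2^\alpha t^{-\alpha}$ while $\int_{t/2}^t r_0^x(t-s)\,ds=\int_0^{t/2}r_0^x\le1$, again a contribution at most $2^\alpha t^{-\alpha}$. Adding the two pieces gives $\int_0^t s^{-\alpha}r_0^x(t-s)\,ds\le C t^{-\alpha}$ for every $0\le\alpha<1$, which is (b).

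The main obstacle is part~(a): making rigorous the boundary regularity of $p^0$, and in particular that $\partial_y p^0(t,x,0^+)$ exists and is $C^k$ in $t$ down to $t\to0^+$ in the appropriate sense, is the technical heart of Theorem~A.1 of \cite{MartinezT12}. Once the bound $r_0^x(s)\le C/s$, uniform in $x$, is available, part~(b) is the short one-variable estimate sketched above; the hypothesis $k\ge2$ there is only needed to secure enough regularity of $p^0$ for that uniform bound.
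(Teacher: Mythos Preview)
The paper does not actually prove this lemma: it is stated in the Appendix without proof, with the remark ``The following Lemma is a combination of Theorem~A.1 and Lemma~A.5 in \cite{MartinezT12}.'' So there is no in-paper argument to compare against; the authors simply import the result from Martinez--Talay.

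Your sketch is a reasonable reconstruction of what such a proof looks like, and your part~(b) is correct and self-contained: the uniform-in-$x$ bound $r_0^x(s)\le C/s$ (which follows from the inverse-Gaussian-type estimate $r_0^x(s)\le C|x|s^{-3/2}e^{-cx^2/s}$ by maximizing $u e^{-cu^2}$ with $u=|x|/\sqrt{s}$) together with the split at $s=t/2$ gives the claimed $C/t^\alpha$ bound cleanly. For part~(a) you correctly identify the substance --- boundary regularity of the killed transition density and identification of $r_0^x$ as the outward flux --- and honestly flag that this is where the real work of \cite{MartinezT12}, Theorem~A.1, lies. Your parenthetical observation that in the paper's actual application $Z$ is constant-coefficient Brownian motion before hitting $0$, so $r_0^x$ is explicitly inverse Gaussian, is spot on and would let one bypass the general machinery entirely for the purposes of Theorem~5.2.
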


We also have the following estimate from \cite{MartinezT12} (See
Lemma A.6).
\begin{lem}\label{lem:r_0^x-est}
There exists a positive constant $\tilde C$ such that for
$0\le\alpha\le1$ and any function $H$ bounded on $[0,T]$,
continuously differentiable on $(0,T]$ satisfying
$$
H(0)=0,\quad |H'(s)|\le {C_H\over s^\alpha}\,\,\forall s\in(0,T]
$$
we have
$$
\Big|{\partial\over\partial x}\int_0^tr_0^x(t-s)H(s)ds\Big|\le
C_H\tilde C,\quad \Big|{\partial^2\over\partial
x^2}\int_0^tr_0^x(t-s)H(s)ds\Big|\le C_H\tilde C\Big(1+{1\over
t^\alpha}\Big),
$$
for all $t\in(0,T]$ and $x\ne0$.
\end{lem}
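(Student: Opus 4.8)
The final statement to prove is \lemref{lem:r_0^x-est}, quoted from \cite[Lemma A.6]{MartinezT12}, which bounds the first and second $x$-derivatives of the convolution $G(t,x):=\int_0^t r_0^x(t-s)H(s)\,ds$ in terms of the constant $C_H$ controlling the growth of $H'$. Since this is imported verbatim as a known result, the task of a self-contained proof is to reconstruct the argument from the structural properties of the first-passage density $r_0^x$ established in \lemref{lem:r_0^x-int}.

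The plan is to first move the $x$-dependence off the singular kernel. The key obstacle is that differentiating $r_0^x(t-s)$ in $x$ produces a kernel that is more singular as $s\uparrow t$, so differentiating under the integral sign directly against the singularity of $H'$ at $s=0$ is delicate. To circumvent this, I would integrate by parts in the time variable, using $H(0)=0$ to kill the boundary term at $s=0$. Writing $R^x(t-s):=\int_s^t r_0^x(t-\sigma)\,d\sigma$ for the (complementary) first-passage distribution function, integration by parts gives
\begin{equation}
G(t,x)=\Big[-R^x(t-s)H(s)\Big]_{s=0}^{s=t}+\int_0^t R^x(t-s)H'(s)\,ds=\int_0^t R^x(t-s)H'(s)\,ds,
\end{equation}
since $R^x(0)=0$ and $H(0)=0$. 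The advantage is that $R^x$ is a probability (a distribution function valued in $[0,1]$), hence bounded, and its $x$-derivatives are integrable against the smooth first-passage theory of \lemref{lem:r_0^x-int}. Now the growth hypothesis $|H'(s)|\le C_H s^{-\alpha}$ enters cleanly.

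The main work is then to control $\partial_x R^x$ and $\partial_x^2 R^x$. First I would record that $R^x(t-s)=P^x(\tau_0(Z)\le t-s)$, so that $\partial_x^j R^x(t-s)=\int_0^{t-s}\partial_x^j r_0^x(\xi)\,d\xi$; the smoothness in $x$ is guaranteed because the first-passage problem for the uniformly elliptic diffusion $Z$ has, by \lemref{lem:r_0^x-int} and standard parabolic regularity, a density that is jointly smooth in $(x,\xi)$ away from $x=0$, with Gaussian-type bounds on $\partial_x^j r_0^x(\xi)$ of the order $\xi^{-(j+1)/2}$ for $j=1,2$ (these are the classical heat-kernel estimates for the killed diffusion). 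Differentiating under the $s$-integral and using these bounds, the first derivative yields
\begin{equation}
\Big|\partial_x G(t,x)\Big|\le \int_0^t \Big(\int_0^{t-s}\big|\partial_x r_0^x(\xi)\big|\,d\xi\Big)\,\frac{C_H}{s^\alpha}\,ds,
\end{equation}
and the inner integral is bounded uniformly in $t,x$ because $\int_0^{t-s}\xi^{-1/2}\,d\xi\le C\sqrt{t-s}\le C\sqrt T$, so the factor $\sqrt{t-s}$ times $s^{-\alpha}$ is integrable on $[0,t]$ and the result is a constant multiple of $C_H$, giving the first inequality. For the second derivative the inner integral $\int_0^{t-s}\xi^{-1}\,d\xi$ diverges logarithmically, so a cruder pointwise bound is not enough; instead one keeps the convolution structure and invokes part (b) of \lemref{lem:r_0^x-int}, namely $\int_0^t s^{-\alpha} r_0^x(t-s)\,ds\le C t^{-\alpha}$, after integrating by parts once more to trade the second $x$-derivative against an $x$-derivative acting on $r_0^x$ paired with the $t^{-\alpha}$-type singularity. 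The extra term $1+t^{-\alpha}$ in the second estimate arises precisely from this use of part (b), the $t^{-\alpha}$ coming from the worst-case accumulation of the singularity near $s=t$ and the $1$ from the bulk contribution.

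I expect the genuine difficulty to be the derivation (or careful citation) of the $x$-derivative bounds $|\partial_x^j r_0^x(\xi)|\le C\xi^{-(j+1)/2}$ on the first-passage density uniformly for $x$ bounded away from $0$, together with matching the resulting singular convolution to part (b) of \lemref{lem:r_0^x-int} so that the final constants depend only on $C_H$, $T$, and the ellipticity. The integration-by-parts reduction and the scaling bookkeeping are routine once those kernel estimates and part (b) are in hand; the whole argument parallels \cite[Lemma A.6]{MartinezT12}, to which I would defer for the fully detailed heat-kernel computations.
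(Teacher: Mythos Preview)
The paper does not prove this lemma at all: it is stated in the Appendix with the preamble ``We also have the following estimate from \cite{MartinezT12} (See Lemma A.6)'' and no argument is given. So there is nothing to compare your reconstruction against; the authors simply import the result wholesale.

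That said, your sketch has an internal inconsistency worth flagging. You assert Gaussian-type bounds $|\partial_x^j r_0^x(\xi)|\le C\xi^{-(j+1)/2}$ for $j=1,2$, which for $j=1$ gives $\xi^{-1}$; yet in the very next display you bound the inner integral by $\int_0^{t-s}\xi^{-1/2}\,d\xi$, which corresponds to $j=0$. With the $j=1$ bound the inner integral $\int_0^{t-s}\xi^{-1}\,d\xi$ diverges, and your argument for the first derivative collapses. In fact, uniform-in-$x$ pointwise bounds of the form $\xi^{-(j+1)/2}$ are not what one has for the first-passage density (for Brownian motion, $r_0^x(\xi)=|x|(2\pi\xi^3)^{-1/2}e^{-x^2/(2\xi)}$, and $\sup_x r_0^x(\xi)\sim C/\xi$). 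The workable object is $\partial_x R^x(u)=\partial_x P^x(\tau_0\le u)$, which for the Brownian case is $O(u^{-1/2})$; but then your convolution $\int_0^t (t-s)^{-1/2}s^{-\alpha}\,ds$ is only uniformly bounded for $\alpha\le 1/2$, not for the full range $0\le\alpha\le1$ claimed. The actual proof in \cite{MartinezT12} uses sharper estimates on $r_0^x$ and its $x$-derivatives that retain the Gaussian factor $e^{-cx^2/\xi}$, and it is this exponential decay (not a bare power of $\xi$) that makes the integrals converge uniformly. Your outline would need to keep that Gaussian weight throughout rather than discarding it for a power bound.
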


Next, let $W(\cdot)$ be a $m$-dimensional standard
Brownian motion on a filtered probability space $(\Omega,
\mathcal{F},\mathcal{F}_t, P)$. Assume that $b(\cdot)$ and
$\sigma(\cdot)$ are two progressive measurable processes taking
values in $\mathbb{R}^d$ and in the space of real $d\times m$
matrices, and $X(\cdot)$ is the $\mathbb{R}^d$-valued process
satisfying
\begin{equation}\label{X(t)-eqn2}
X(t)=X(0)+\int_0^tb(s)ds+\int_0^t\sigma(s)dW(s).
\end{equation}
Assume that\\
{\bf Assumption (A).} There exists a positive number $K\ge1$ such
that
\begin{equation}\label{b(t)-boundedness}
\forall\, t\ge0,\quad \|b(t)\|\le K\quad P-\text{a.s.}
\end{equation}
and
\begin{equation}\label{sigma(t)-boundedness}
\forall\, 0\le s\le t,\quad {1\over
K^2}\int_s^t\psi(s)ds\le\int_s^t\psi(s)\|\sigma\sigma^*\|ds\le
K^2\int_0^t\psi(s)ds
\end{equation}
for all positive locally integrable function $\psi$ on
$\mathbb{R}^+$.

\noindent{\bf Assumption (B).} $f$ is an increasing function in
$C^1([0,T),\mathbb{R}^+)$ such that $f^\alpha$ is integrable on
$[0,T)$ for all $1\le\alpha<2$. In addition, there exists
$1<\nu<1+\eta$ where $\eta={1\over 4K^4}$ such that
\begin{equation}\label{f-int-boundedness}
\int_0^Tf^{2\nu-1}(s)f'(s){(T-s)^{1+\eta}\over s^\eta}ds<\infty.
\end{equation}
Notice that \eqref{sigma(t)-boundedness} is satisfied if $\sigma$ is
a bounded continuous process. The Assumption (B) is satisfied if
$f(t)={1\over \sqrt{T-t}}$ and $\nu=1+{1\over8K^4}$.
We have the following estimate for the number of visits of small balls.

\begin{thm}[Theorem A.9 from \cite{MartinezT12}]\label{theorem:small-ball}
Assume (A) and (B). Let $X(\cdot)$ be as in \eqref{X(t)-eqn2}. Then
there exists a constant $C>0$ depending only on $\nu, K$ and $T$
such that for all $\xi\in\mathbb{R}^d$ and $0<\epsilon<1/2$, there
exists $h_0>0$ satisfying
\begin{equation}\label{small-ball-est}
\forall\, h\le h_0,\quad h\sum_{k=0}^{\lfloor
T/h\rfloor-1}f(kh)P\big(\|X(kh)-\xi\|\le h^{1/2-\epsilon}\big)\le
Ch^{1/2-\epsilon}.
\end{equation}
\end{thm}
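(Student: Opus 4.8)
The plan is to bound each small-ball probability $P(\|X(kh)-\xi\|\le h^{1/2-\epsilon})$ pointwise by a Gaussian-type quantity of order $h^{1/2-\epsilon}/\sqrt{kh}$, capped at $1$, and then to carry out the weighted summation in \eqref{small-ball-est} by comparing the Riemann sum to an integral and invoking the integrability built into Assumption (B). There are two genuinely separate tasks: (a) producing the per-time small-ball estimate from the structural hypotheses \eqref{b(t)-boundedness} and \eqref{sigma(t)-boundedness} alone, with no Markov or Aronson structure available since $b,\sigma$ are only progressively measurable; and (b) threading the exponent bookkeeping so that the final bound is exactly $Ch^{1/2-\epsilon}$.

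For task (a) I would first remove the drift by a Girsanov change of measure. Writing $\theta_s=\sigma^{-1}(s)b(s)$, the two bounds in Assumption (A) give $\|\theta_s\|\le K^2$, hence $\int_0^T\|\theta_s\|^2\,ds\le K^4T$, so the density $Z=dP/d\tilde P=\exp(\int_0^T\theta_s\,d\tilde W_s-\tfrac12\int_0^T\|\theta_s\|^2\,ds)$ has all $\tilde P$-moments controlled, with $(\tilde E Z^p)^{1/p}\le\exp(\tfrac{p-1}{2}K^4T)$. This is where the threshold $\eta=1/(4K^4)$ enters: the admissible Hölder exponent $p$, equivalently the parameter $\nu\in(1,1+\eta)$ of Assumption (B), is the one for which these exponential moments can be absorbed. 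Under $\tilde P$ the process in \eqref{X(t)-eqn2} is a continuous martingale whose quadratic variation is comparable to time, $\tfrac{t}{K^2}I\preceq\langle X\rangle_t\preceq K^2tI$, by taking $\psi\equiv1$ in \eqref{sigma(t)-boundedness}; a Dambis--Dubins--Schwarz time change together with the uniform lower ellipticity then yields $\tilde P(\|X(t)-\xi\|\le r)\le C(r/\sqrt t\,)^d$ uniformly in $\xi$ and in the coefficients. Transferring back through Hölder gives $P(\|X(t)-\xi\|\le r)\le C_p\,(r/\sqrt t\,)^{d(1-1/p)}$, which loses a power; recovering the sharp per-time rate $r/\sqrt t$ is precisely what the extra, weighted integrability of $f$ in \eqref{f-int-boundedness} is designed to afford once one sums over the grid.

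For task (b) I would substitute $r=h^{1/2-\epsilon}$ and split the sum at the index $k_0\sim h^{-2\epsilon}$ where the cap becomes active, i.e.\ where $\sqrt{kh}$ crosses $h^{1/2-\epsilon}$. For $k\le k_0$ I use the trivial bound $P\le1$ and the boundedness of $f$ on $[0,T/2]$ (it lies in $C^1([0,T))$), which contributes $h\sum_{k\le k_0}f(kh)\le Ch\cdot h^{-2\epsilon}=Ch^{1-2\epsilon}=o(h^{1/2-\epsilon})$. For $k>k_0$ I use the per-time estimate and the monotonicity of $f$ to dominate the Riemann sum by an integral,
\[
h\sum_{k>k_0}f(kh)\,\frac{h^{1/2-\epsilon}}{\sqrt{kh}}\le C\,h^{1/2-\epsilon}\Big(h\sum_{k>k_0}\frac{f(kh)}{\sqrt{kh}}\Big)\le C\,h^{1/2-\epsilon}\int_0^T\frac{f(s)}{\sqrt s}\,ds ,
\]
whose convergence near $s=0$ is automatic, while its convergence near $s=T$, where $f$ is singular, is what the weighted condition \eqref{f-int-boundedness} secures after an integration by parts trading $f^{2\nu-1}f'$ for $(f^{2\nu})'$ against the weight $(T-s)^{1+\eta}/s^\eta$. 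Collecting the two blocks yields the right-hand side $Ch^{1/2-\epsilon}$ of \eqref{small-ball-est}, with $C$ depending only on $\nu,K,T$, and with $h_0$ chosen so that $k_0\ge1$ and the Riemann-sum comparison is valid.

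The main obstacle is the coupling between tasks (a) and (b): the Hölder transfer after Girsanov necessarily costs a power, and the only way to reach the sharp exponent $h^{1/2-\epsilon}$ is to spend the extra integrability of $f$ encoded in \eqref{f-int-boundedness}, with the admissible integrability order $2\nu-1$ pinned to the Girsanov budget through $\eta=1/(4K^4)$. Establishing the driftless small-ball bound uniformly over merely adapted, integrated-elliptic coefficients—where no transition density need exist—via the martingale time change, and then carrying the exponent bookkeeping so that nothing worse than $h^{1/2-\epsilon}$ survives the summation, is the delicate heart of the argument; the remaining estimates are routine. A complete treatment along these lines is given in \cite{MartinezT12}.
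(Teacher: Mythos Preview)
The paper itself gives no proof of this statement: it is quoted verbatim as Theorem~A.9 of \cite{MartinezT12} in the Appendix, with no argument supplied. There is thus nothing in the present paper to compare your sketch against; your proposal already does more than the paper by outlining a strategy before deferring to the same reference.

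That said, your sketch has an internal inconsistency that you yourself flag but do not resolve. In task~(a) you obtain, after the Girsanov--H\"older transfer, only the degraded per-time bound $P(\|X(t)-\xi\|\le r)\le C(r/\sqrt t)^{d(1-1/p)}$; yet the explicit Riemann-sum computation you display in task~(b) inserts the \emph{undegraded} rate $h^{1/2-\epsilon}/\sqrt{kh}$. With the degraded exponent $\alpha:=d(1-1/p)<1$ the same summation yields only $O(h^{\alpha(1/2-\epsilon)})$, which is strictly weaker than the target $h^{1/2-\epsilon}$. You assert that ``the extra integrability of $f$ encoded in \eqref{f-int-boundedness}'' closes this gap, but you do not show the mechanism; the integration-by-parts hint (trading $f^{2\nu-1}f'$ for $(f^{2\nu})'$) does not by itself restore the lost factor. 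The actual argument in \cite{MartinezT12} does not proceed by a termwise small-ball bound followed by summation; rather, H\"older is applied to the whole weighted sum at once, and the $q$-th moment of $\sum_k f(kh)\mathbf{1}_{A_k}$ is controlled directly---this is precisely where the condition $1<\nu<1+\eta$ with $\eta=1/(4K^4)$ and the weighted integrability \eqref{f-int-boundedness} enter in a coupled way. Your decomposition of the difficulty is accurate, but the route you display for task~(b) would not close without that global step.

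A minor technical point: for general $d\times m$ coefficients $\sigma$ you cannot write $\theta_s=\sigma^{-1}(s)b(s)$; the correct Girsanov shift is $\theta_s=\sigma^*(s)(\sigma\sigma^*)^{-1}(s)b(s)$, which is bounded by the two-sided ellipticity in \eqref{sigma(t)-boundedness}. Similarly, the Dambis--Dubins--Schwarz step you invoke for the driftless small-ball bound is straightforward only when $d=1$; in higher dimensions one needs an additional argument to pass from coordinatewise time-changed Brownian motions to a joint small-ball estimate.
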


\end{document}